\documentclass{article}
\usepackage{fullpage}
\usepackage{amsthm}
\usepackage{amsmath}
\usepackage{amssymb}
\usepackage{authblk}
\usepackage{t1enc}
\usepackage[utf8]{inputenc}
\usepackage{colonequals}
\usepackage{float}
\usepackage{tikz}
\usetikzlibrary{arrows.meta, decorations.shapes, shapes.geometric, calc, positioning, backgrounds, shapes, decorations.markings}
\usepackage{caption}
\captionsetup{font=footnotesize}
\usepackage{cite}
\usepackage[nobysame]{amsrefs}
\usepackage{thmtools}
\usepackage{enumerate}
\usepackage{enumitem}
\usepackage{multirow}
\usepackage{booktabs}
\usepackage{lscape}
\usepackage{array}
\usepackage{titling}
\thanksmarkseries{arabic}
\usepackage{placeins}

\usepackage{algorithm}
\usepackage[noend]{algpseudocode}

\usepackage[unicode,colorlinks=true,citecolor=green!40!black,linkcolor=red!20!black,urlcolor=blue!40!black,filecolor=cyan!30!black]{hyperref}

\makeatletter
 \newcommand{\linkdest}[1]{\Hy@raisedlink{\hypertarget{#1}{}}}
\makeatother

\renewcommand{\eprint}[1]{#1}
\renewcommand{\PrintYear}[1]{#1.}
\DefineSimpleKey{bib}{volumeadditionalinformation}
\DefineSimpleKey{bib}{issue}
\DefineSimpleKey{bib}{articlenumber}

\BibSpec{article}{
 +{}  {\PrintAuthors}          {author}
 +{,} { \textit}               {title}
 +{,} { }                      {journal}
 +{}  { \textbf}               {volume}
 +{}  {\parenthesize}          {issue}
 +{}  { \parenthesize}         {volumeadditionalinformation}
 +{:} { }                      {pages}
 +{,} { article no.~}          {articlenumber}
 +{,} { \PrintYear}            {year}
 +{}  { Available at \eprint}  {eprint}
 +{}  { \PrintDOI}             {doi}
 +{}  { \parenthesize}         {note}
}

\DeclareFontShape{T1}{cmr}{m}{scit}{<->ssub*cmr/m/sc}{}

\allowdisplaybreaks

\frenchspacing

\theoremstyle{definition}
\newtheorem{definition}{Definition}[section]
\newtheorem{theorem}[definition]{Theorem}
\newtheorem{lemma}[definition]{Lemma}
\newtheorem{proposition}[definition]{Proposition}

\newtheorem{corollary}[definition]{Corollary}

\newtheorem{remark}[definition]{Remark}

\title{Color-avoiding connected colorings and orientations}
\author{J\'{o}zsef Pint\'{e}r\thanks{Department of Stochastics, Institute of Mathematics, Budapest University of Technology and Economics.} \textsuperscript{,}\thanks{HUN-REN--BME Stochastics Research Group.} \and Kitti Varga\thanks{Department of Computer Science and Information Theory, Faculty of Electrical Engineering and Informatics, Budapest University of Technology and Economics.} \textsuperscript{,}\thanks{HUN-REN--ELTE Egerv\'{a}ry Research Group.}}

\begin{document}

\maketitle

\begin{abstract}
We study network robustness under correlated failures modeled by colors, where each color represents a class of edges or vertices that may fail simultaneously. An edge-colored graph is said to be edge-color-avoiding $k$-edge-connected if it remains $k$-edge-connected after the removal of all edges of any single color. We characterize the graphs that admit such a coloring and show that, when $k=1$, one can determine in polynomial time both the minimum number of colors required and a coloring achieving it; while the problem becomes NP-hard for $k \ge 2$. We also investigate the problem of orienting the edges of a graph so that the resulting digraph remains strongly or rooted connected even after the removal of all arcs of any single color. In addition, we explore generalizations involving vertex-colorings, $k$-vertex-connectivity, simultaneous failures of multiple colors and matroids. 
\end{abstract}

\section{Introduction} \label{section:intro}

In real-world networks, failures often occur in correlated ways. Multiple links may simultaneously fail due to, for example, hardware module failure, or jamming of a shared frequency band. These correlated failures can be modeled by assigning failure labels, often denoted by colors, to the edges or vertices of a network, grouping together those that fail as a unit. Such models naturally arise in diverse areas including cybersecurity~\cite{sheyner02}, infrastructure network design~\cite{coudert16} and survivable routing with shared risk groups~\cite{cdp07}.

\paragraph{Previous works.}
One of the earliest practical instances of correlated failure models can be found in the security-oriented work of Sheyner et al.~\cite{sheyner02}, where the authors use colors on edges to represent different types of attacks. A security analyst might ask for the smallest set of attack types to guard against, such that the attacker is prevented from reaching a critical target --- a problem that is, in essence, finding a minimum label cut.

Since then, a wide variety of related problems have emerged under different names. Despite their common structure --- failure groups defined by sets of edges --- the literature across these domains has remained fragmented. Xu and Farag{\'o}~\cite{xu24} explicitly highlight this disconnection, noting that the same problem has been studied under different names in different communities, often with missing or misleading citations.

The theoretical computer science community developed so-called label cut and colored cut frameworks~\cites{labelcut,morawietz22}. In parallel, network designers studied similar issues under the names Shared Risk Link Groups~\cite{cdp07} or Shared Risk Resource Groups~\cite{coudert16}. In this model, each color represents a group with shared risk: a failure affecting all edges labeled with that color simultaneously.

A broader complexity landscape of these problems has been charted in the work of Coudert et al.~\cite{coudert16}, who analyzed a wide array of survivability problems under correlated edge failures. Their results highlight that while the general case is typically NP-hard, certain structural restrictions -- such as when each color induces a connected subgraph or when all edges of a given color are incident to a single vertex --lead to polynomial-time algorithms. These distinctions are especially relevant in practice, where such structured coloring patterns often arise naturally in layered or modular network architectures.

Building on the label cut framework, Ghaffari et al.~\cite{gkp17} introduced the so-called hedge cut problem, where the goal is to disconnect the graph by removing a minimum number of failure groups. This formulation captures scenarios where resilience against correlated failures is required at a global scale, rather than between specific source-target pairs.

They provided a polynomial-time approximation scheme and a quasi-polynomial exact algorithm using random sampling and contractions. Recently, Jaffke et al.~\cite{jaffke22} proved that this quasi-polynomial complexity is essentially tight: under the randomized Exponential Time Hypothesis, no significantly faster exact algorithm exists for the hedge cut problem. Complementing this result, Fomin et al.~\cite{fomin24} showed that the decision version of the hedge cut problem is fixed-parameter tractable when parameterized by the number~$\ell$ of colors to be removed.

A very similar framework was introduced under the name color-avoiding connectivity by Krause et al.~\cites{article:physicists2,article:physicists3} for vertex-colored graphs, and later extended to edge-colored graphs by Kadovi{\'c} et al.~\cite{article:physicists1}. The central problem is to find a maximum subset of nodes such that there exists a communication path between two nodes that avoids any single color, or more generally, any subset of up to~$\ell$ colors.

From a computational complexity point of view, Molontay and Varga~\cite{article:color-avoiding_components} analyzed the problem of finding color-avoiding connected components, showing that while some variants are solvable in polynomial time, others are NP-hard. In~\cite{article:CA_spanning_subgraphs}, we studied the problem of finding color-avoiding connected spanning subgraphs, proving its NP-hardness and proposing polynomial-time approximation algorithms. We also extended the notion of color-avoiding connectivity to matroids by defining courteously colored matroids, following the terminology of courteous edge-colorings by DeVos et al.~\cite{article:courteous}.

\paragraph{Our contributions.}

In this paper, we present a framework covering various cases such as edge-colored graphs, vertex-colored graphs, simultaneous failures of multiple colors, and stronger connectivity requirements such as $k$-edge- or $k$-vertex-connectivity. Our framework also incorporates directed networks, allowing us to explore color-avoiding strong and rooted connectivity.

 First, we characterize those graphs that can be colored in color-avoiding connected manners. In addition, we investigate the problem of determining the minimum number of colors required for such a coloring and actually finding such a coloring. We also study the problem of finding an orientation of the edges of a colored undirected graph, or finding an orientation and a coloring of the edges of an uncolored undirected graph to achieve color-avoiding strong or rooted connectivity. Furthermore, we extend our findings to encompass $k$-edge- and $k$-vertex-connectivity, as well as scenarios involving the simultaneous elimination of multiple colors from the network. Additionally, in certain cases, we extend our analysis to matroids and the assignment of multiple colors to edges or vertices.


 \bigskip

 The paper is organized as follows. In Section~\ref{section:preliminaries}, we provide a concise overview of the graph-theoretic and matroid background essential for understanding some parts of the paper. Following this, we define various forms of color-avoiding connectivity and explore interesting relations.

 Section~\ref{section:colorings} delves into the investigation of color-avoiding connected colorings, wherein we give a necessary and sufficient condition for a graph admitting a color-avoiding connected coloring. We also study determining the minimum number of colors required for such colorings. First, we focus on edge-colored graphs, then on vertex-color-avoiding connectivity, and finally, on internally vertex-color-avoiding connectivity along with their respective directed counterparts.

 In Section~\ref{section:orientations}, we study color-avoiding connected orientations, aiming to determine the complexity of deciding whether a colored graph can be oriented to achieve color-avoiding strong or rooted connectivity. Finally, we also consider the problem of simultaneously finding an orientation and a coloring to obtain a color-avoiding strongly or rooted connected digraph.

\section{Preliminaries} \label{section:preliminaries}

In this article, we study color-avoiding connected graphs and courteously colored matroids. The graphs considered in this article might have parallel edges, and they are not necessarily properly colored\footnote{A \emph{proper edge-coloring} (or \emph{proper vertex-coloring}) is an assignment of colors to the edges (or vertices) in which no two incident edges (or no two adjacent vertices) receive the same color.} unless otherwise stated. First, we recall some important definitions and notation.

The set of positive integers is denoted by $\mathbb{Z}_+$. For two sets $X$ and $Y$, the \emph{set difference} of $X$ and $Y$ is denoted by $X-Y$. 
For a graph $G$, we denote the \emph{number of its connected components} by $c(G)$. For a graph~$G$ and a set of edges $E' \subseteq E(G)$ or a set of vertices $V' \subseteq V(G)$, we denote by $G-E'$ or by $G-V'$ respectively the graph that is obtained from $G$ by deleting the edges of $E'$ or the vertices of $V'$ from it.

A graph is called \emph{$k$-edge-connected} (or \emph{$k$-vertex-connected}\footnote{The standard definition of $k$-vertex-connectivity requires the graph to have at least $k+1$ vertices and to remain connected whenever fewer than $k$ vertices are removed. This ensures that the \emph{connectivity number} (that is, the smallest $k$ for which the graph is $k$-vertex-connected) can be defined for complete graphs. Under both definitions, the complete graph $K_n$ is $(n-1)$-vertex-connected but not $n$-vertex-connected. However, with the usual definition, a two-vertex graph with $k \ge 2$ parallel edges or a one-vertex graph with $k \ge 1$ loops is not $k$-vertex-connected, while in this article we consider these graphs $k$-vertex-connected.}) if it has at least two vertices and there are at least $k$ pairwise edge-disjoint (or internally vertex-disjoint) paths between any two vertices, or if it is a one-vertex graph with at least $k$ loops, where $k \in \mathbb{Z}_+$.\footnote{A graph is 1-edge-connected if and only if it is 1-vertex-connected; such graphs and the one-vertex graph with no loops, are simply called \emph{connected}.}

An \emph{edge-cut}, \emph{vertex cut}, or \emph{mixed cut} in a connected graph is a set of edges, vertices, or a combination of both, respectively, whose removal disconnects the graph. By Menger’s classical theorems~\cite{article:menger}, a connected graph on at least two vertices is $k$-edge-connected (or $k$-vertex-connected) if and only if it has no edge-cut (or mixed cut) of size at most $k-1$.

A directed graph and a directed path are simply called a \emph{digraph} and a \emph{dipath}, respectively, and a directed edge is called an \emph{arc}. A digraph is called \emph{strongly $k$-arc-connected} (or \emph{strongly $k$-vertex-connected}) if it has at least two vertices and there exist at least $k$ pairwise arc-disjoint (or at least $k$ pairwise internally vertex-disjoint) dipaths from any vertex to any other vertex, or if it is a 1-vertex graph with at least $k$ loops, where $k \in \mathbb{Z}_+$.\footnote{A digraph is strongly 1-arc-connected if and only if it is strongly 1-vertex-connected; such digraphs and the one-vertex digraph with no loops are simply called \emph{strongly connected}.} A digraph with a given vertex $r$ is called \emph{$r$-rooted $k$-arc-connected} (or \emph{$r$-rooted $k$-vertex-connected}) if it has at least two vertices and there exist at least $k$ pairwise arc-disjoint (or at least $k$ pairwise internally vertex disjoint) paths from $r$ to any other vertex, or if it is a one-vertex graph with at least $k$ loops.\footnote{A digraph is rooted 1-arc-connected if and only if it is rooted 1-vertex-connected; such digraphs and the one-vertex digraph with no loops are simply called \emph{rooted connected}.}

A \emph{matroid} $\mathcal{M} = (S, \mathcal{I})$ is a pair formed by a finite \emph{ground set} $S$ and a family of subsets $\mathcal{I} \subseteq 2^S$ called \emph{independent sets} satisfying the \emph{independence axioms}:

\begin{enumerate}[topsep=2pt, itemsep=0pt]
 \item[(I1)] $\emptyset \in \mathcal{I}$,
 \item[(I2)] for any $X,Y \subseteq S$ with $X \subseteq Y$, if $Y \in \mathcal{I}$, then $X \in \mathcal{I}$,
 \item[(I3)] for any $X,Y \in \mathcal{I}$ with $|X| < |Y|$, there exists $e \in Y-X$ such that $X \cup \{e\} \in \mathcal{I}$.
\end{enumerate}

\noindent The maximal independent subsets of $S$ are called \emph{bases}. A \emph{circuit} is an inclusion-wise minimal non-in\-de\-pen\-dent set, and a \emph{loop} is an element which forms a circuit on its own in the matroid. The \emph{rank} of a set $X \subseteq S$ in the matroid, denoted by $r(X)$, is the maximum size of an independent subset of $X$. The \emph{rank of the matroid} is the rank of its ground set. 

A \emph{graphic matroid} is a matroid whose independent sets can be represented as the edge sets of forests of a graph; if $G$ is a graph, then the matroid associated to $G$ is denoted by $\mathcal{M}(G)$. If the underlying graph is connected, then the bases of the graphic matroid are the spanning trees of the graph; for an example, see Figure~\ref{fig:graphic_matroid}.

\begin{figure}[!ht]
\centering
\begin{tikzpicture}[scale=1.2]
 \tikzstyle{vertex}=[draw,circle,fill,minimum size=6,inner sep=0]
 
 \node[vertex] (a1) at (0,0)  {};
 \node[vertex] (b1) at (0,1) {};
 \node[vertex] (c1) at (1,1) {};
 \node[vertex] (d1) at (1,0) {};
 
 \draw[very thick] (a1) -- (b1) node[midway, left] {$e_2$};
 \draw[very thick] (c1) -- (d1) node[midway, right] {$e_4$};
 \draw[very thick] (b1) -- (c1) node[midway, above] {$e_1$};
 \draw[very thick] (a1) -- (d1) node[midway, below] {$e_5$};
 \draw[very thick] (a1) -- (c1) node[midway, xshift=-5pt, yshift=5pt] {$e_3$};
\end{tikzpicture}
\caption{An example for a graphic matroid: the ground set of the matroid is $\{ e_1, e_2, e_3, e_4, e_5 \}$ and the bases are $\{ e_1, e_2, e_4 \}$, $\{ e_1, e_2, e_5 \}$, $\{ e_1, e_3, e_4 \}$, $\{ e_1, e_3, e_5 \}$, $\{ e_1, e_4, e_5 \}$, $\{ e_2, e_3, e_4 \}$, $\{ e_2, e_3, e_5 \}$, and $\{ e_2, e_4, e_5 \}$.}
\label{fig:graphic_matroid}
\end{figure}
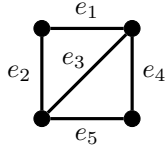

If $\mathcal{M}$ is a matroid on the ground set $S$ and $X \subseteq S$, then the \emph{deletion} of $X$ from $\mathcal{M}$ is the matroid $\mathcal{M} \setminus X \colonequals (S-X, \mathcal{I'})$, where $\mathcal{I}' \colonequals \{ Y \subseteq S-X \mid Y \in \mathcal{I} \}$.

The \emph{dual} of $\mathcal{M}$ is the matroid $\mathcal{M}^* = (S, \mathcal{I}^*)$ where $\mathcal{I}^*$ consists of those sets $X$ for which $S-X$ contains a basis of $\mathcal{M}$. A \emph{cut} in $\mathcal{M}$ is a set which forms a circuit in $\mathcal{M}^*$. A \textit{bridge} is a cut of size 1. Note that if $G$ is a connected graph, then the cuts in $\mathcal{M}(G)$ are exactly the edge cuts of $G$.

A \emph{coloring} of a matroid is an assignment of colors to the elements of its ground set, and a coloring is called \emph{proper} if the elements of any color form an independent set in the matroid (i.e., there are no monochromatic cycles). The \emph{chromatic number} of a loopless matroid $\mathcal{M}$, denoted by $\chi(\mathcal{M})$, is the minimum number of colors in a proper coloring of $\mathcal{M}$. Note that if $\mathcal{M} = \mathcal{M}(G)$ is a loopless graphic matroid, then $\chi(\mathcal{M})$ is the arboricity of $G$ (i.e.\ the minimum number of forests needed to cover all of the edges, and not the ``usual, graph theoretical'' chromatic number of $G$).\footnote{Some authors call the chromatic number of a matroid $\mathcal{M}$ as the \emph{covering number} and denote it by $\beta(\mathcal{M})$. Given two matroids $\mathcal{M}_1$ and $\mathcal{M}_2$ on the same ground set, determining the exact value of $\beta(\mathcal{M}_1 \cap \mathcal{M}_2)$ has been in the focus of research. Putting it in the context of courteous colorings, $\beta(\mathcal{M}_1 \cap \mathcal{M}_2)$ denotes the minimum number of colors needed to color the common ground set of $\mathcal{M}_1$ and $\mathcal{M}_2$ so that both $\mathcal{M}^*_1$ and $\mathcal{M}^*_2$ are courteously colored with respect to this coloring. It is not difficult to see that $\beta(\mathcal{M}_1 \cap \mathcal{M}_2) \ge \min \big\{ \beta(\mathcal{M}_1), \beta(\mathcal{M}_2) \big\}$, and Aharoni and Berger showed that $\beta(\mathcal{M}_1 \cap \mathcal{M}_2) \le 2 \max \big\{ \beta(\mathcal{M}_1), \beta(\mathcal{M}_2) \big\}$, see~\cite{article:AharoniBerger_1}. Aharoni et al.\ conjectured that $\beta(\mathcal{M}_1 \cap \mathcal{M}_2) = \max \big\{ \beta(\mathcal{M}_1), \beta(\mathcal{M}_2) \big\}$ if $\beta(\mathcal{M}_1) \ne \beta(\mathcal{M}_2)$, and $\beta(\mathcal{M}_1 \cap \mathcal{M}_2) \le \max \big\{ \beta(\mathcal{M}_1), \beta(\mathcal{M}_2) \big\} + 1$ if $\beta(\mathcal{M}_1) = \beta(\mathcal{M}_2)$, see~\cite{article:AharoniBerger_2}. B\'{e}rczi and Schwarcz recently proved that determining the exact value of $\beta(\mathcal{M}_1 \cap \mathcal{M}_2)$ is difficult under the rank oracle model~\cite{article:BercziSchwarcz}.}

Since the number of independent sets can be exponential in the size of the ground set, for matroid algorithms it is usually assumed that the matroid is accessed through an \emph{oracle}. An oracle can be thought of as a black box that, given an input set $X \subseteq S$, outputs certain properties of the set, e.g., whether $X$ is independent or not (\emph{independence oracle}), or the rank of $X$ (\emph{rank oracle}). Then the complexity of a matroid algorithm is measured by the number of oracle calls and other conventional elementary steps. There are various types of oracles having the same computational power in the sense that any of them can be simulated by using a polynomial number of calls to any of the others, measured in terms of the size of the ground set. For further details, we refer the interested reader to~\cites{robinson1980computational,hausmann1981algorithmic,coullard1996independence}. Here we assume that a matroid is accessed through an independence oracle that can determine whether a subset of the ground set is independent or not.

\subsection{Color-avoiding connectivity}

Now we are ready to define color-avoiding connectivity for edge- and vertex-colored graphs.

\begin{definition}
 Let $G$ be a graph
 , $C$ be a finite color set, $f \colon E(G) \to C$ be a function assigning colors to the edges, and $k, \ell$ be positive integers.
 The edge-colored graph $G$ is called \emph{edge-$\ell$-color-avoiding $k$-edge-connected} (or \emph{edge-$\ell$-color-avoiding $k$-vertex-connected}) if after the removal of the edges of any at most $\ell$ colors from~$G$, the remaining graph is $k$-edge-connected (or $k$-vertex-connected). If $G$ is an edge-$\ell$-color-avoiding $k$-edge-connected (or edge-$\ell$-color-avoiding $k$-vertex-connected) graph with respect to the edge-coloring $f$, then $f$ is called an \emph{edge-$\ell$-color-avoiding $k$-edge-connected} (or \emph{edge-$\ell$-color-avoiding $k$-vertex-connected}) \emph{coloring} of $G$.
\end{definition}

Note that an edge-colored graph 
is edge-$\ell$-color-avoiding 1-edge-connected if and only if it is edge-$\ell$-color-avoiding 1-vertex-connected; such a graph is simply called \emph{edge-$\ell$-color-avoiding 1-connected}. 

On Figure \ref{fig:ecakecexample}, we can see some examples for the definition of edge-$\ell$-color-avoiding $k$-edge-connectivity.

\begin{figure}[!ht]
\centering
\begin{tikzpicture}[scale=1]
 \tikzstyle{vertex}=[draw,circle,fill,minimum size=10,inner sep=0]
 \tikzset{paint/.style={draw=#1!50!black, fill=#1!50}, decorate with/.style = {decorate, decoration={shape backgrounds, shape=#1, shape size = 5pt, shape sep = 6pt}}}
 \tikzset{edge_red/.style={postaction={decorate, decoration={markings, mark=between positions 3pt and 1-2pt step 7pt with {\draw[red!50!black,thin,fill=red!50] (30:0.08) -- (150:0.08) -- (210:0.08) -- (330:0.08) -- (30:0.08);}}}}}
 \tikzset{edge_blue/.style={postaction={decorate, decoration={markings, mark=between positions 5pt and 1-2pt step 6pt with {\draw[blue!50!black,thin,fill=blue!50] (0:0.08) -- (120:0.08) -- (240:0.08) -- (0:0.08);}}}}}
 \tikzset{edge_green/.style={postaction={decorate, decoration={markings, mark=between positions 5pt and 1-2pt step 8.5pt with {\draw[green!50!black,thin,fill=green!50] (0:0.16) -- (90:0.08) -- (180:0.08) -- (270:0.08) -- (0:0.16);}}}}}

 \begin{scope}[shift={(-4,0)}]
 \node[vertex] (a1) at (90:1.25)  {};
 \node[vertex] (b1) at (210:1.25) {};
 \node[vertex] (c1) at (330:1.25) {};

 \path[edge_red] (a1) -- (c1);
 \path[edge_blue] (b1) -- (c1);
 \end{scope}
 
 \node[vertex] (a1) at (90:1.25)  {};
 \node[vertex] (b1) at (210:1.25) {};
 \node[vertex] (c1) at (330:1.25) {};
 
 \path[edge_green] (a1) -- (b1);
 \path[edge_red] (a1) -- (c1);
 \path[edge_blue] (b1) -- (c1);
  
 \begin{scope}[shift={(4,0)}]
 \node[vertex] (a1) at (90:1.25)  {};
 \node[vertex] (b1) at (210:1.25) {};
 \node[vertex] (c1) at (330:1.25) {};
 
 \path[edge_green, bend right=20] (a1) to (b1);
 \path[edge_red, bend left=20] (a1) to (b1);
 \path[edge_red, bend right=20] (a1) to (c1);
 \path[edge_green, bend left=20] (a1) to (c1);
 \path[edge_blue] (b1) -- (c1);
 \end{scope}

 \begin{scope}[shift={(8,0)}]
 \node[vertex] (a1) at (90:1.25)  {};
 \node[vertex] (b1) at (210:1.25) {};
 \node[vertex] (c1) at (330:1.25) {};

 \path[edge_green, bend right=20] (a1) to (b1);
 \path[edge_red, bend left=20] (a1) to (b1);
 \path[edge_green, bend right=20] (a1) to (c1);
 \path[edge_blue, bend left=20] (a1) to (c1);
 \path[edge_red, bend right=20] (b1) to (c1);
 \path[edge_blue, bend left=20] (b1) to (c1);
 \end{scope}
\end{tikzpicture}
\caption{Examples for edge-$\ell$-color-avoiding $k$-edge-connectivity. The first graph is not edge-1-color-avoiding 1-connected: after the removal of the red (denoted by rectangles) edges, the top vertex becomes isolated. The second graph is edge-1-color-avoiding 1-connected but not edge-1-color-avoiding 2-edge-connected: after the removal of edges of any single color, the graph remains connected, but not 2-edge-connected. The third graph is edge-1-color-avoiding 2-edge-connected but not edge-2-color-avoiding 1-connected: after the removal of the red (denoted by rectangles) and green (denoted by deltoids) edges, the top vertex becomes isolated. Finally, the fourth graph is edge-2-color-avoiding 1-connected: after the removal of any two colors, there remains a Hamiltonian path.}
\label{fig:ecakecexample}
\end{figure}

Using the classical theorems of Menger~\cite{article:menger}, we can give equivalent definitions for edge-1-color-avoiding 1-connectivity or, in general, for edge-$\ell$-color-avoiding $k$-edge- and $k$-vertex-connectivity as follows.

\begin{proposition} \label{prop:equiv_def_of_ECA}
 Let $G$ be a graph on at least two vertices, $C$ be a finite color set and $f \colon E(G) \to C$ be a function. Then the following are equivalent.
 
 \begin{enumerate}[label=(\roman*), topsep=2pt, itemsep=0pt]
  \item \label{item:i} The graph $G$ is edge-1-color-avoiding 1-connected.
  \item \label{item:ii} After the removal of the edges of any color, there remains a path between any two vertices.
  \item \label{item:iii} There exist no monochromatic edge cuts in $G$.
  \item \label{item:iv} There exist no inclusion-wise minimal monochromatic edge cuts in $G$.
  \item \label{item:v} \linkdest{item:v_of_prop}{} After the removal of the edges of any color, there remains a spanning tree in the graph.
 \end{enumerate}
\end{proposition}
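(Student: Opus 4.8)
The plan is to prove the single cycle of implications $(i) \Rightarrow (ii) \Rightarrow (iii) \Rightarrow (iv) \Rightarrow (v) \Rightarrow (i)$, driven throughout by one elementary observation: removing the edges of a color $c$ is exactly removing the color class $f^{-1}(c)$, and a set of edges $F$ disconnects a connected graph if and only if $F$ contains an edge cut. I would first record the standard equivalences recalled in the preliminaries, specialized to each subgraph $G - f^{-1}(c)$ (which has the same vertex set $V(G)$, so still at least two vertices): being $1$-edge-connected is the same as being connected, which is the same as having a path between any two vertices, which is the same as containing a spanning tree. These make several steps of the cycle immediate.

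For $(i) \Rightarrow (ii)$, the definition of edge-$1$-color-avoiding $1$-connectivity says precisely that $G - f^{-1}(c)$ is $1$-edge-connected, hence connected, hence has a path between any two vertices, for every color $c$. For $(ii) \Rightarrow (iii)$ I argue by contraposition: a monochromatic edge cut $F$ of color $c$ satisfies $F \subseteq f^{-1}(c)$, so $G - f^{-1}(c)$ is a spanning subgraph of the disconnected graph $G - F$ and is therefore itself disconnected, contradicting $(ii)$. The step $(iii) \Rightarrow (iv)$ is trivial, since ruling out all monochromatic edge cuts in particular rules out the inclusion-wise minimal ones. For $(iv) \Rightarrow (v)$, again by contraposition: if some $G - f^{-1}(c)$ contains no spanning tree then it is disconnected, so $f^{-1}(c)$ is a monochromatic edge cut; shrinking it to an inclusion-wise minimal edge cut $F' \subseteq f^{-1}(c)$ (every edge cut contains a bond, and $F'$ stays monochromatic as a subset of a single color class) produces an inclusion-wise minimal monochromatic edge cut, contradicting $(iv)$. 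Finally $(v) \Rightarrow (i)$ holds because a graph on $V(G)$ that contains a spanning tree is connected, hence $1$-edge-connected on its at least two vertices.

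The only genuinely delicate point I expect is bookkeeping around the degenerate case: the preliminaries define edge cuts only in connected graphs, so conditions $(iii)$ and $(iv)$ implicitly presuppose that $G$ is connected. I would dispose of this at the very start by noting that each of $(i)$, $(ii)$, and $(v)$ forces $G$ itself to be connected, since deleting edges can never merge components; hence we may assume $G$ is connected throughout, and the notion of a (minimal) monochromatic edge cut is well defined. With that reduction in hand, the two nontrivial directions --- that a monochromatic edge cut causes disconnection after deleting its color, and conversely that disconnection after deleting a color exhibits a monochromatic (minimal) edge cut --- carry no hidden subtlety, so the cycle closes cleanly.
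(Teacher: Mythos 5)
The paper states this proposition without proof, presenting it as an immediate consequence of the definitions (the appeal to Menger is really only needed for the general Proposition~2.3), so there is no authorial argument to compare against; your proposal is judged on its own merits, and it is correct. The cycle $(i)\Rightarrow(ii)\Rightarrow(iii)\Rightarrow(iv)\Rightarrow(v)\Rightarrow(i)$ closes cleanly: the only two non-formal steps --- that a monochromatic edge cut $F$ of color $c$ satisfies $F\subseteq f^{-1}(c)$ and hence forces $G-f^{-1}(c)$ to be disconnected, and conversely that a disconnected $G-f^{-1}(c)$ exhibits $f^{-1}(c)$ as a monochromatic edge cut which can be shrunk to an inclusion-wise minimal one --- are both handled properly. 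Your explicit treatment of the degenerate case (that $(i)$, $(ii)$, $(v)$ each force $G$ to be connected, so that the notion of edge cut in $(iii)$ and $(iv)$ is well defined) is a detail the paper glosses over entirely, and it is the right way to make the five-way equivalence literally true rather than true only under an unstated connectivity hypothesis.
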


Given a graph $G$, for an edge-coloring $f$ and a subset $E'$ of edges of $G$, let $f(E')$ denote the set of the colors of the edges in $E'$.

\begin{proposition} \label{prop:equiv_def_of_ECA_2}
 Let $G = (V,E)$ be a graph on at least two vertices, $C$ be a finite color set, and $f \colon E(G) \to C$ be an edge-coloring. Then the following are equivalent.
 
 \begin{enumerate}[label=(\roman*), topsep=2pt, itemsep=0pt]
  \item \label{item2:i} The graph $G$ is edge-$\ell$-color-avoiding $k$-edge-connected (or edge-$\ell$-color-avoiding $k$-vertex-connected).
  \item \label{item2:ii} \linkdest{item:ii_of_equiv_def}{} After the removal of the edges of any at most $\ell$ colors, there remains at least $k$ pairwise edge-disjoint (or at least $k$ pairwise internally vertex-disjoint) paths between any two vertices.
  \item \label{item2:iii} \linkdest{item:iii_of_equiv_def}{} There exists no edge cut $E' \cup E'' \subseteq E$ such that $|E'| \le k-1$ and $\big| f(E'') \big| \le \ell$. (Or there exists no mixed cut $V' \cup E' \cup E''$ with $V' \subseteq V$ and $E' \cup E'' \subseteq E$ such that $|V' \cup E'| \le k-1$ and $|f(E'')| \le \ell$.)
 \end{enumerate}
\end{proposition}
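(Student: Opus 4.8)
The plan is to establish the two equivalences (i) $\Leftrightarrow$ (ii) and (i) $\Leftrightarrow$ (iii) separately, treating the edge-connected and vertex-connected versions in parallel: the two versions differ only in replacing the edge form of Menger's theorem by its vertex form, and edge cuts by mixed cuts. Throughout I write $f^{-1}(D)$ for the set of edges whose color lies in a color set $D$, so that removing the edges of the colors in $D$ yields the graph $G - f^{-1}(D)$.

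For (i) $\Leftrightarrow$ (ii) I would simply unfold the definitions. Since deleting edges does not change the vertex set, for every $D$ with $|D| \le \ell$ the graph $G - f^{-1}(D)$ still has at least two vertices. Hence, by the definition of $k$-edge-connectivity (resp.\ $k$-vertex-connectivity) recalled in Section~\ref{section:preliminaries}, $G - f^{-1}(D)$ is $k$-edge-connected (resp.\ $k$-vertex-connected) if and only if it contains $k$ pairwise edge-disjoint (resp.\ internally vertex-disjoint) paths between any two vertices. Taking the conjunction over all $D$ with $|D| \le \ell$ yields the equivalence of (i) and (ii).

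For (i) $\Leftrightarrow$ (iii) in the edge case I would use the cut form of Menger's theorem from the preliminaries: a graph on at least two vertices fails to be $k$-edge-connected exactly when it has an edge cut of size at most $k-1$. Suppose first that (i) fails, so some $D$ with $|D| \le \ell$ makes $G - f^{-1}(D)$ not $k$-edge-connected; choose a bipartition $(S, V - S)$ into nonempty parts with at most $k-1$ crossing edges in $G - f^{-1}(D)$. Writing $\delta_H(S)$ for the set of edges of a graph $H$ with exactly one endpoint in $S$, I set $E' \colonequals \delta_G(S) - f^{-1}(D)$ and $E'' \colonequals \delta_G(S) \cap f^{-1}(D)$. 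Then $E' \cup E'' = \delta_G(S)$ is an edge cut of $G$ with $|E'| \le k-1$ and $f(E'') \subseteq D$, so $|f(E'')| \le \ell$, contradicting (iii). Conversely, given a cut $E' \cup E''$ witnessing the failure of (iii), I set $D \colonequals f(E'')$ and note that $E'' \subseteq f^{-1}(D)$; deleting the edges of the colors in $D$ removes all of $E''$ and leaves a cut contained in $E'$, so $G - f^{-1}(D)$ has an edge cut of size at most $k-1$ and is not $k$-edge-connected, whence (i) fails.

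For the vertex version of (i) $\Leftrightarrow$ (iii) I would run the identical argument with mixed cuts in place of edge cuts, using the mixed-cut form of Menger's theorem recalled in the preliminaries. Starting from a mixed cut $V' \cup E'$ of size at most $k-1$ in $G - f^{-1}(D)$, with associated sides $A, B$ of $V - V'$, I enlarge it by the edges of $G$ running between $A$ and $B$ that lie in $f^{-1}(D)$, collecting these into $E''$; then $V' \cup E' \cup E''$ is a mixed cut of $G$ with $|V' \cup E'| \le k-1$ and $f(E'') \subseteq D$. The reverse direction is symmetric: deleting the colors in $f(E'')$ removes $E''$ and turns a witnessing mixed cut of $G$ into a mixed cut of size at most $k-1$ in the color-reduced graph. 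I expect the only delicate point to be the bookkeeping here — ensuring, in the presence of parallel edges and under the article's non-standard convention for $k$-vertex-connectivity, that $E''$ captures exactly the crossing edges of the avoided colors and that enlarging or shrinking the cut by these edges preserves the disconnecting property; the edge-connected case is then the clean special case $V' = \emptyset$.
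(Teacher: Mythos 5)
Your proof is correct and follows exactly the route the paper intends: the paper states this proposition without proof, presenting it as an immediate consequence of Menger's theorems as recalled in the preliminaries, and your argument simply supplies the routine details (unfolding the definition for (i)$\Leftrightarrow$(ii), and translating between color classes and cuts via Menger for (i)$\Leftrightarrow$(iii)). The bookkeeping you flag as delicate — splitting the crossing edges of a separating bipartition into those of the avoided colors ($E''$) and the rest ($E'$), and conversely absorbing $E''$ into the deleted color classes — is handled correctly.
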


For ease of reference, let us introduce the following terms for paths not using edges of some specific colors. Let $G$ be such a graph on at least two vertices, whose edges are colored with a finite color set $C$, and let $k, \ell$ be positive integers. For a subset of colors $C' \subseteq C$, a path is called an \emph{edge-$C'$-avoiding path} if it does not contain any edges of any color in $C'$. The vertices $u$ and $v$ are called \emph{edge-$\ell$-color-avoiding $k$-edge-connected} (or \emph{edge-$\ell$-color-avoiding $k$-vertex-connected}) if there exist at least $k$ pairwise edge-disjoint edge-$C'$-avoiding (or $k$ pairwise internally vertex-disjoint edge-$C'$-avoiding) $u$-$v$ paths for any subset $C' \subseteq C$ with $|C'| \le \ell$. Then $G$ is edge-$\ell$-color-avoiding $k$-edge-connected (or edge-$\ell$-color-avoiding $k$-vertex-connected) if and only if 
any two of its vertices are edge-$\ell$-color-avoiding $k$-edge-connected (or edge-$\ell$-color-avoiding $k$-vertex-connected).

The equivalent definition of edge-1-color-avoiding 1-connectivity presented in Proposition~\hyperlink{item:v_of_prop}{\ref*{prop:equiv_def_of_ECA}.\ref*{item:v}} can be extended to matroids as well: in a previous article~\cite{article:CA_spanning_subgraphs}, we called a matroid whose ground set is colored, courteously colored if after the removal of the elements of any single color, at least one basis of the matroid remains intact, i.e, the rank of the matroid does not decrease. Now we extend this definition for the case when more colors can be removed simultaneously.

\begin{definition}
 Let $\mathcal{M} = (S, \mathcal{I})$ be a matroid whose ground set is colored, and let $\ell \in \mathbb{Z}_+$. We say that this coloring of $\mathcal{M}$ is \emph{$\ell$-courteous} if after the deletion of the elements of any at most $\ell$ colors, the rank of the matroid does not change.
\end{definition}

In particular, a graphic matroid is $\ell$-courteously colored if and only if each connected component of the corresponding graph is edge-$\ell$-color-avoiding connected. 

Since a set forms a cut in a matroid if and only if it is minimal for the property that its removal decreases the rank of the matroid~\cite{book:recski}, Proposition~\ref{prop:equiv_def_of_ECA_2} directly implies the following.

\begin{proposition} \label{prop:equiv_def_of_courteous}
 Let $\mathcal{M}$ be a matroid whose ground set is colored, and let $\ell \in \mathbb{Z}_+$. Then the following are equivalent.
 
 \begin{enumerate}[label=(\roman*), topsep=2pt, itemsep=0pt]
  \item \label{item3:i} The matroid $\mathcal{M}$ is $\ell$-courteously colored.
  \item \label{item3:ii} \linkdest{item3:ii_of_prop}{} The matroid $\mathcal{M}$ does not contain a cut consisting of elements colored with at most $\ell$ colors.
  \item \label{item3:iii} \linkdest{item3:iii_of_prop}{} The matroid $\mathcal{M^*}$ does not contain a cycle consisting of elements colored with at most $\ell$ colors.
 \end{enumerate}
\end{proposition}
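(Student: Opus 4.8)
The plan is to prove the two equivalences \ref{item3:ii} $\Leftrightarrow$ \ref{item3:iii} and \ref{item3:i} $\Leftrightarrow$ \ref{item3:ii} separately. The first is essentially a restatement of definitions, while the second carries the only genuine matroidal content, namely the translation between ``rank does not decrease'' and ``contains no small cut''.

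For \ref{item3:ii} $\Leftrightarrow$ \ref{item3:iii}, I would simply invoke the definition of the dual: a set is a cut of $\mathcal{M}$ exactly when it forms a circuit (cycle) of $\mathcal{M}^*$. Since $\mathcal{M}$ and $\mathcal{M}^*$ share the same colored ground set $S$ and the same coloring $f$, a cut of $\mathcal{M}$ uses at most $\ell$ colors if and only if the corresponding cycle of $\mathcal{M}^*$ uses at most $\ell$ colors. Thus the existence of a cut with at most $\ell$ colors in $\mathcal{M}$ is literally the same statement as the existence of a cycle with at most $\ell$ colors in $\mathcal{M}^*$, and the equivalence follows with no further work.

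The core is \ref{item3:i} $\Leftrightarrow$ \ref{item3:ii}. Here I would use the characterization from~\cite{book:recski} that a set is a cut precisely when it is inclusion-wise minimal among the sets whose deletion lowers the rank. From this I would extract the working form of the statement: for $X \subseteq S$, deleting $X$ decreases $r(\mathcal{M})$ if and only if $X$ contains some cut. The connection to colors is then that deleting ``the elements of the colors in $C'$'' means deleting the set $X_{C'} = \{\, e \in S : f(e) \in C' \,\}$, and $X_{C'}$ contains a cut if and only if $\mathcal{M}$ has a cut $D$ with $f(D) \subseteq C'$. I would argue both contrapositives. If \ref{item3:ii} fails, there is a cut $D$ with $|f(D)| \le \ell$; choosing $C' = f(D)$ gives at most $\ell$ colors whose deletion removes all of $D$ and hence lowers the rank, so the coloring is not $\ell$-courteous and \ref{item3:i} fails. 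Conversely, if \ref{item3:i} fails, some color set $C'$ with $|C'| \le \ell$ satisfies $r(S - X_{C'}) < r(\mathcal{M})$; then $X_{C'}$ contains a cut $D$, and since $f(D) \subseteq C'$ we obtain $|f(D)| \le \ell$, so \ref{item3:ii} fails.

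The only step requiring care --- and the one I expect to be the main, if modest, obstacle --- is justifying ``deletion lowers the rank iff the deleted set contains a cut''. The easy direction is monotonicity: if $D \subseteq X$ is a cut, then $S - X \subseteq S - D$ forces $r(S - X) \le r(S - D) < r(\mathcal{M})$. For the other direction I would note that the family of sets whose deletion decreases the rank is upward closed and nonempty whenever it contains $X$, so it has a minimal member inside $X$, which is a cut by the cited characterization. Once this is in place, the whole proposition follows by the bookkeeping of colors described above, matching the assertion made in the sentence preceding the statement.
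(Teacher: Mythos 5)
Your proof is correct and follows exactly the route the paper intends: the paper gives no explicit proof, merely remarking that the statement ``directly'' follows from the characterization of cuts as inclusion-wise minimal rank-decreasing sets, which is precisely the lemma you isolate and verify (together with the definitional equivalence between cuts of $\mathcal{M}$ and circuits of $\mathcal{M}^*$). Your write-up simply supplies the details --- in particular the upward-closedness argument producing a minimal rank-decreasing set inside $X_{C'}$ --- that the paper leaves implicit.
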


Both of the equivalent definitions of edge-$\ell$-color-avoiding $k$-edge-connectivity (or $k$-vertex-connectivity) presented in Proposition~\hyperlink{item:ii_of_equiv_def}{\ref*{prop:equiv_def_of_ECA_2}.\ref*{item2:ii}} and ~\hyperlink{item:iii_of_equiv_def}{\ref*{prop:equiv_def_of_ECA_2}.\ref*{item2:iii}} can be naturally modified for vertex-colored graphs, however, unlike for edge-colored graphs, these two concepts for vertex-colored graphs do not coincide. 

\begin{definition}
 Let $G$ be a graph, $C$ be a finite color set, $f \colon V(G) \to C$ be a function assigning colors to the vertices, and $k, \ell$ be positive integers. The vertex-colored graph $G$ is called \emph{vertex-$\ell$-color-avoiding $k$-edge-connected} (or \emph{vertex-$\ell$-color-avoiding $k$-vertex-connected}) if after the removal of the vertices of any at most $\ell$ colors from $G$, the remaining graph is either $k$-edge-connected (or $k$-vertex-connected) or it has at most one vertex. If $G$ is a vertex-$\ell$-color-avoiding $k$-edge-connected (or vertex-$\ell$-color-avoiding $k$-vertex-connected) graph with respect to the vertex-coloring $f$, then $f$ is called a \emph{vertex-$\ell$-color-avoiding $k$-edge-connected} (or \emph{vertex-$\ell$-color-avoiding $k$-vertex-connected}) \emph{coloring}.
\end{definition}

Given a graph $G$, for a vertex-coloring $f$ and a subset $V'$ of vertices of $G$, let $f(V')$ denote the set of the colors of the vertices in $V'$.

\begin{definition}
 Let $G$ be a graph, $C$ be a finite color set, $f \colon V(G) \to C$ be a function assigning colors to the vertices, and $k, \ell$ be positive integers. The vertex-colored graph $G$ is called \emph{internally vertex-$\ell$-color-avoiding $k$-edge-connected} if there exists no mixed cut $V' \cup E'$ with $V' \subseteq V$ and $E' \subseteq E$ such that $|E'| \le k-1$ and $\big| f(V') \big| \le \ell$. The vertex-colored graph $G$ is called \emph{internally vertex-$\ell$-color-avoiding $k$-vertex-connected} if there exists no mixed cut $V' \cup V'' \cup E'$ with $V' \cup V'' \subseteq V$ and $E' \subseteq E$ such that $|V'| + |E'| \le k-1$ and $\big| f(V'') \big| \le \ell$. If $G$ is an internally vertex-$\ell$-color-avoiding $k$-edge-connected (or internally vertex-$\ell$-color-avoiding $k$-vertex-connected) graph with respect to the vertex-coloring $f$, then $f$ is called an \emph{internally vertex-$\ell$-color-avoiding $k$-edge-connected} (or \emph{internally vertex-$\ell$-color-avoiding $k$-vertex-connected}) \emph{coloring}.
\end{definition}

Note that a vertex-colored graph is vertex-$\ell$-color-avoiding 1-edge-connected if and only if it is vertex-$\ell$-color-avoiding 1-vertex-connected; such a graph is simply called \emph{vertex-$\ell$-color-avoiding 1-connected}. Similarly, a vertex-colored graph is internally vertex-$\ell$-color-avoiding 1-edge-connected if and only if it is internally vertex-$\ell$-color-avoiding 1-vertex-connected; such a graph is simply called \emph{internally vertex-$\ell$-color-avoiding 1-connected}. 

For a summary of the different versions of the color-avoiding connectivity of a graph, see Table~\ref{table:summary_def_graph}.

\begin{table}[!ht] 
\begin{center}
 \setlength{\tabcolsep}{5pt}
 \renewcommand{\arraystretch}{1.2}
 \captionsetup{type=table}
 \begin{tabular}{!{\vrule width 1.25pt} c !{\vrule width 1.25pt} c !{\vrule width 1.25pt} c !{\vrule width 1.25pt}} \specialrule{1.25pt}{0pt}{0pt}
   & edge-$\ell$-color-avoiding & vertex-$\ell$-color-avoiding \\ \specialrule{1.25pt}{0pt}{0pt}
   \multirow{5}{*}{\parbox{100pt}{\centering $k$-edge-connectivity}} & \multirow{5}{*}{\parbox{140pt}{\centering after the removal of the edges of any at most $\ell$ colors, the remaining graph is $k$-edge-connected}} & \multirow{5}{*}{\parbox{140pt}{\centering after the removal of the vertices of any at most $\ell$ colors, the remaining graph is either $k$-edge-connected or has at most one vertex}} \\
   & & \\
   & & \\
   & & \\
   & & \\ \hline
   \multirow{5}{*}{\parbox{100pt}{\centering $k$-vertex-connectivity}} & \multirow{5}{*}{\parbox{140pt}{\centering after the removal of the edges of any at most $\ell$ colors, the remaining graph is $k$-vertex-connected}} & \multirow{5}{*}{\parbox{140pt}{\centering after the removal of the vertices of any at most $\ell$ colors, the remaining graph is either $k$-vertex-connected or has at most one vertex}} \\
   & & \\
   & & \\
   & & \\
   & & \\ \specialrule{1.25pt}{0pt}{0pt}
   \multicolumn{3}{c}{} \\ \specialrule{1.25pt}{0pt}{0pt}
   & \multirow{2}{*}{\parbox{100pt}{\centering edge-$\ell$-color-avoiding}} & \multirow{2}{*}{\parbox{100pt}{\centering internally \\ vertex-$\ell$-color-avoiding}}
    \\
    & & \\ \specialrule{1.25pt}{0pt}{0pt}
   \multirow{4}{*}{\parbox{100pt}{\centering $k$-edge-connectivity}} & \multirow{4}{*}{\parbox{140pt}{\centering no subset of the edges of any at most $\ell$ colors along with at most $k-1$ additional edges form an edge cut}} & \multirow{4}{*}{\parbox{140pt}{\centering no subset of the vertices of any at most $\ell$ colors along with at most $k-1$ additional edges form a mixed cut}} \\
   & & \\
   & & \\
   & & \\ \hline
   \multirow{4}{*}{\parbox{100pt}{\centering $k$-vertex-connectivity}} & \multirow{4}{*}{\parbox{140pt}{\centering no subset of the edges of any at most $\ell$ colors along with at most $k-1$ additional edges and vertices form a mixed cut}} & \multirow{4}{*}{\parbox{140pt}{\centering no subset of the vertices of any at most $\ell$ colors along with at most $k-1$ additional edges and vertices form a mixed cut}} \\
   & & \\
   & & \\
   & & \\ \specialrule{1.25pt}{0pt}{0pt}
 \end{tabular}
 \captionof{table}{Summary of the definitions of the color-avoiding connectivity of a graph presented in Section~\ref{section:preliminaries}.}
 \label{table:summary_def_graph}
\end{center}

\vspace{-13pt}
\end{table}

Similarly as in the case of edge-colored graphs, we introduce the following notions. Let $G$ be a graph on at least two vertices which are colored with a finite color set $C$, and let $k, \ell$ be positive integers. For a subset of colors $C' \subseteq C$, a path is called an \emph{internally vertex-$C'$-avoiding path} if it does not contain any internal vertices of any color in $C'$. The vertices $u$ and $v$ are called \emph{internally vertex-$C'$-avoiding $k$-edge-connected} (or \emph{internally vertex-$C'$-avoiding $k$-vertex-connected}) if there exist at least $k$ pairwise edge-disjoint internally vertex-$C'$-avoiding (or $k$ pairwise vertex-disjoint internally vertex-$C'$-avoiding) $u$-$v$ paths. The vertices $u$ and $v$ are called \emph{internally vertex-$\ell$-color-avoiding $k$-edge-connected} (or \emph{internally vertex-$\ell$-color-avoiding $k$-vertex-connected}) if they are internally vertex-$C'$-avoiding $k$-edge-connected (or internally vertex-$C'$-avoiding $k$-vertex-connected) for any subset $C' \subseteq C$ with $|C'| \le \ell$. Then $G$ is internally vertex-$\ell$-color-avoiding $k$-edge-connected (or internally vertex-$\ell$-color-avoiding $k$-vertex-connected) if and only if any two vertices of $G$ are internally vertex-$\ell$-color-avoiding $k$-edge-connected (or internally vertex-$\ell$-color-avoiding $k$-vertex-connected).

The vertices $u$ and $v$ are called \emph{vertex-$C'$-avoiding $k$-edge-connected} (or \emph{vertex-$C'$-avoiding $k$-vertex-connected}) if
\begin{itemize}[topsep=2pt, itemsep=-2pt]
 \item[--] either their colors are not in $C'$ and they are internally vertex-$C'$-avoiding $k$-edge-connected (or internally vertex-$C'$-avoiding $k$-vertex-connected),
 \item[--] or their color is in $C'$ and there exist at least $k$ pairwise edge-disjoint (or vertex-disjoint) $u$-$v$ paths.
\end{itemize}
The vertices $u$ and $v$ are called \emph{vertex-$\ell$-color-avoiding $k$-edge-connected} (or \emph{vertex-$\ell$-color-avoiding $k$-vertex-connected}) if for any subset $C' \subseteq C$ with $|C'| \le \ell$ they are vertex-$C'$-avoiding $k$-edge-connected (or vertex-$C'$-avoiding $k$-vertex-connected). Then $G$ is vertex-$\ell$-color-avoiding $k$-edge-connected (or vertex-$\ell$-color-avoiding $k$-vertex-connected) if and only if any two vertices of $G$ are vertex-$\ell$-color-avoiding $k$-edge-connected (or vertex-$\ell$-color-avoiding $k$-vertex-connected).

Now observe that if a graph is internally vertex-$\ell$-color-avoiding $k$-edge-connected, then it is also vertex-$\ell$-color-avoiding $k$-edge-connected, but not every vertex-$\ell$-color-avoiding $k$-edge-connected graph is internally vertex-$\ell$-color-avoiding $k$-edge-connected, and the same holds for $k$-vertex-connectivity as well; for an example, see Figure~\ref{figure:example_vertex_definitions_difference}.

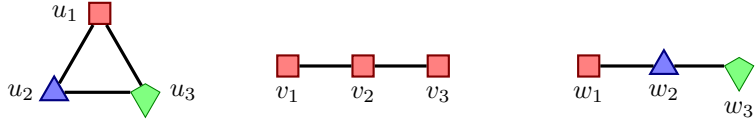
\begin{figure}[!ht]
\centering
\begin{tikzpicture}[scale=1]
 \tikzstyle{vertex_red}=[draw, thick, red!50!black, shape=rectangle, fill=red!50, minimum size=8pt, inner sep=0]
  \tikzstyle{vertex_blue}=[draw, thick, blue!50!black, shape=regular polygon, regular polygon sides=3, fill=blue!50, minimum size=12pt, inner sep=0]
  \tikzstyle{vertex_green}=[draw, green!50!black, kite, fill=green!50, minimum size=11pt, inner sep=0.1pt]

  \node[vertex_red] (u1) at (90:0.7) [label=left: $u_1$] {};
  \node[vertex_blue] (u2) at (210:0.7) [label=left: $u_2$] {};
  \node[vertex_green] (u3) at (330:0.7) [label=right: $u_3$] {};

  \draw[very thick] (u1) -- (u2) -- (u3) -- (u1);
  \begin{scope}[shift={(2.5,0)}]  
  \node[vertex_red] (v1) at (0,0) [label=below: $v_1$] {};
  \node[vertex_red] (v2) at (1,0) [label=below: $v_2$] {};
  \node[vertex_red] (v3) at (2,0) [label=below: $v_3$] {};
  
  \draw[very thick] (v1) -- (v2) -- (v3);
  
  \node[vertex_red] (w1) at (4,0) [label=below: $w_1$] {};
  \node[vertex_blue] (w2) at (5,0) [label=below: $w_2$] {};
  \node[vertex_green] (w3) at (6,0) [label=below: $w_3$] {};
  
  \draw[very thick] (w1) -- (w2) -- (w3);
  \end{scope}
 \end{tikzpicture}
 \caption{Examples for vertex- and internally vertex-$\ell$-color-avoiding $k$-edge-connectivity. A cycle whose vertices are all of distinct colors is both vertex-$\ell$-color-avoiding 1-connected and internally vertex-$\ell$-color-avoiding 1-connected for any $\ell \in \mathbb{Z}_+$. A path on three red vertices (denoted by squares) is vertex-$\ell$-color-avoiding 1-connected, but it is not internally vertex-$\ell$-color-avoiding 1-connected for any $\ell \in \mathbb{Z}_+$ since there exists no $v_1$-$v_3$ path whose internal vertices are not red. On the other hand, if the vertices of the path all have distinct colors, then the graph is neither vertex-1-color-avoiding nor internally vertex-1-color-avoiding 1-connected: after the removal of the blue vertices (denoted by triangles), $w_1$ and $w_3$ are disconnected.}
 \label{figure:example_vertex_definitions_difference}
\end{figure}

Later, we make use of the following straightforward proposition.

\begin{proposition}
 \label{proposition:equiv_def_of_IVCA}
 Let $G$ be a vertex-colored graph on at least two vertices, $C$ be a finite color set and $f \colon V(G) \to C$ be a function. Then the following are equivalent.
 
 \begin{enumerate}[label=(\roman*), topsep=1pt, itemsep=-0.5pt]
  \item The graph $G$ is internally vertex-1-color-avoiding 1-connected.
  \item There exist no monochromatic vertex cuts (including the empty set) in $G$.
  \item \label{item:iii_of_IVCA} \linkdest{item:iii_of_prop_IVCA}{} There exist no inclusion-wise minimal monochromatic vertex cuts (including the empty set) in $G$.
 \end{enumerate}
\end{proposition}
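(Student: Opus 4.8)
The plan is to prove the two equivalences $\text{(i)} \Leftrightarrow \text{(ii)}$ and $\text{(ii)} \Leftrightarrow \text{(iii)}$ separately, the first being essentially a matter of unwinding the definition and the second a short finiteness argument. The only point requiring care throughout is the convention that the empty set counts as a (monochromatic) vertex cut precisely when $G$ is disconnected, which is exactly what the parenthetical ``including the empty set'' is there to enforce.

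For $\text{(i)} \Leftrightarrow \text{(ii)}$, I would simply specialize the defining cut condition of internally vertex-$\ell$-color-avoiding $k$-vertex-connectivity to the case $\ell = k = 1$. The definition forbids a mixed cut $V' \cup V'' \cup E'$ with $|V'| + |E'| \le k-1 = 0$ and $|f(V'')| \le \ell = 1$; since $|V'| + |E'| \le 0$ forces $V' = E' = \emptyset$, this collapses to forbidding a vertex cut $V''$ with $|f(V'')| \le 1$, that is, a monochromatic vertex cut. This is verbatim condition (ii). To align the empty-set conventions, I would note that a disconnected $G$ fails to be internally vertex-1-color-avoiding 1-connected (two vertices lying in distinct components admit no connecting path at all), and that correspondingly $\emptyset$ is then a monochromatic vertex cut, so (ii) fails as well; when $G$ is connected, the empty set is never a cut and the two statements match directly.

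For $\text{(ii)} \Leftrightarrow \text{(iii)}$, the implication $\text{(ii)} \Rightarrow \text{(iii)}$ is immediate, since if no monochromatic vertex cut exists then in particular no inclusion-wise minimal one does. The reverse direction I would prove by contraposition: assuming a monochromatic vertex cut $V''$ exists, I want to extract from it an inclusion-wise minimal monochromatic vertex cut. I would consider the family $\mathcal{F}$ of all subsets of $V''$ that are vertex cuts; it is nonempty (it contains $V''$) and finite, so it has an inclusion-wise minimal element $W$. Since $W \subseteq V''$ and $V''$ is monochromatic, $W$ is monochromatic as well (monochromaticity is clearly inherited by subsets, with $\emptyset$ counting as monochromatic). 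Moreover, any proper subset of $W$ is again a subset of $V''$, hence cannot be a vertex cut by minimality of $W$ in $\mathcal{F}$; thus no proper subset of $W$ is a monochromatic vertex cut, and $W$ is an inclusion-wise minimal monochromatic vertex cut, contradicting (iii).

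There is no serious obstacle here; the statement is genuinely straightforward. The one subtlety worth stating explicitly is that ``inclusion-wise minimal monochromatic vertex cut'' should be read as minimal \emph{among monochromatic vertex cuts}, and the argument above works because every monochromatic set has only monochromatic subsets, so minimality within $\mathcal{F}$ (the subsets of $V''$ that are cuts) already yields minimality among all monochromatic vertex cuts. Handling the degenerate disconnected case uniformly via the empty-set convention is the only place where one must be deliberate rather than purely mechanical.
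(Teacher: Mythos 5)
Your proof is correct. The paper offers no proof of this proposition at all (it is introduced as ``straightforward''), and your argument --- unwinding the cut-based definition with $k=\ell=1$ for the equivalence of (i) and (ii), and extracting an inclusion-wise minimal cut from inside any given monochromatic vertex cut for the equivalence of (ii) and (iii), with the empty-set convention handled separately for disconnected graphs --- is precisely the routine verification the authors intend the reader to supply.
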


The notion of color-avoiding connectivity can be further generalized by considering a function $f$ assigning a subset of colors to each edge or vertex of the graph. This makes the edges or the vertices of a graph more sensitive or immune against the ``color-attacks'' since an edge or a vertex is destroyed whenever one of the colors assigned to it is attacked; the empty subset of colors means that corresponding edge or vertex cannot be removed.

\begin{remark} \label{remark:spec_colorings}
 Let $G$ be a graph and let $k, \ell \in \mathbb{Z}_+$.
 
 \begin{enumerate}[label=\arabic*., topsep=2pt, itemsep=0pt]
  \item If each edge or each vertex is assigned the empty set of colors, then the edge-, vertex- and internally vertex-$\ell$-color-avoiding $k$-edge-connectivity (or $k$-vertex-connectivity) all give back the definition of the ``usual'' $k$-edge-connectivity (or $k$-vertex-connectivity).
  
  \item If each vertex is colored with the same color, then the vertex-$\ell$-color-avoiding $k$-edge-connectiv\-ity (or $k$-vertex-connectivity) is the same as the ``usual'' $k$-edge-connectivity (or $k$-vertex-connectivity).
  
  \item If each edge is assigned a unique color, then the edge-$\ell$-color-avoiding $k$-edge-connectivity is the same as the ``usual'' $(k + \ell)$-edge-connectivity.
  
  \item If each vertex is assigned a unique color, then the vertex- and the internally vertex-$\ell$-color-avoiding $k$-vertex-connectivity is the same as the ``usual'' $(k+\ell)$-vertex-connectivity.

 \end{enumerate}
\end{remark}

We can naturally extend the definition of color-avoiding connectivity to digraphs by requiring the existence of $u$-$v$ and $v$-$u$ dipaths with the same properties as those in the undirected case, and we call this notion color-avoiding strong connectivity. For a summary of the different versions of the color-avoiding connectivity of two vertices, see Table~\ref{table:summary_def_two_vertices}. By requiring the existence of $r$-$v$ dipaths, having the previously mentioned properties, from a given vertex $r$ to any other vertex $v$, we can obtain the notion of color-avoiding $r$-rooted connectivity as well.


Replacing each edge by two arcs of the same color directed in opposite directions in Figures~\ref{fig:ecakecexample} and~\ref{figure:example_vertex_definitions_difference}, we clearly get a not arc-1-color-avoiding strongly 1-connected, an arc-1-color-avoiding strongly 1-connected but not arc-1-color-avoiding strongly 2-arc-connected, an arc-1-color-avoiding strongly 2-arc-connected but not arc-2-color-avoiding strongly 1-connected, an arc-2-color-avoiding strongly 1-connected, an internally vertex-$\ell$-color-avoiding strongly 1-connected, a vertex-$\ell$-color-avoiding strongly 1-connected but not internally vertex-$\ell$-color-avoiding strongly 1-connected, and a not vertex-1-color-avoiding strongly 1-connected graph, respectively (where $\ell \in \mathbb{Z}_+$). For some examples on color-avoiding rooted connectivity, see Figure~\ref{fig:rootedexample}.


The first statement of the following lemma was already used in a previous work of ours~\cite{article:CA_spanning_subgraphs} for undirected graphs, and it can be naturally extended to directed graphs.

\begin{lemma} \label{lemma:differently_colored_neighbor} 
 $\empty$
 
 \begin{enumerate}[label=(\arabic*), topsep=2pt, itemsep=0pt]
  \item Let $G$ be a vertex-colored graph in which every vertex $v$ has neighbor whose color is different from that of $v$. Then $G$ is vertex-1-color-avoiding 1-connected if and only if it is internally vertex-1-color-avoiding 1-connected.
  \item \label{item:ii_of_differently_colored_neighbor} \linkdest{item:ii_of_lemma_differently_colored_neighbor}{} Let $D$ be a vertex-colored digraph in which every vertex $v$ has an in- and an out-neighbor whose colors are different from that of $v$. Then $D$ is vertex-1-color-avoiding strongly 1-connected if and only if it is internally vertex-1-color-avoiding strongly 1-connected.
  \item \label{item:iii_of_differently_colored_neighbor} \linkdest{item:iii_of_lemma_differently_colored_neighbor}{} Let $D$ be such a vertex-colored digraph with a distinguished vertex $r$, in which every vertex $v \ne r$ has an in-neighbor whose color is different from that of $v$, and $r$ has a unique color. Then $D$ is vertex-1-color-avoiding $r$-rooted 1-connected if and only if it is internally vertex-1-color-avoiding $r$-rooted 1-connected.
 \end{enumerate}
\end{lemma}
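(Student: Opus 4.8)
The plan is to prove each of the three equivalences by splitting into the two implications, noting first that the direction from internally-color-avoiding to vertex-color-avoiding connectivity is the easy one and requires none of the neighbor hypotheses: any internally-$\{c\}$-avoiding path is, in particular, an ordinary path in the (di)graph, so it certifies the weaker vertex-color-avoiding requirement in every case (this is exactly the containment already recorded in the observation preceding Figure~\ref{figure:example_vertex_definitions_difference}). All the substance lies in the converse, where I would argue directly via the path-based characterizations: since $\ell=1$, I fix a single color $c$ and an ordered pair of endpoints and must manufacture an internally-$\{c\}$-avoiding path between them out of the weaker guarantee.

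The organizing step is a case distinction on whether $c$ is the color of one of the two endpoints. If $c$ is the color of neither endpoint, then the vertex-$\{c\}$-avoiding hypothesis already hands us an internally-$\{c\}$-avoiding path and there is nothing further to do; I will call this the \emph{good case}. The only work is in the \emph{bad case}, where $c$ coincides with the color of an endpoint: there the vertex-color-avoiding hypothesis yields merely an ordinary path, possibly routed through $c$-colored interior vertices, and this has to be upgraded to one whose interior avoids $c$.

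The device for the upgrade is the differently-colored-neighbor hypothesis. For part~(1), if an endpoint $v$ has color $c$, I pick a neighbor $w$ of $v$ with color different from $c$; because $w$ is non-$c$-colored, the good case supplies an internally-$\{c\}$-avoiding path reaching $w$, and appending the edge $wv$ produces the required path, whose only possibly-$c$-colored vertex is the endpoint $v$. If both endpoints carry color $c$, I detour at both ends at once, joining their differently-colored neighbors by a good-case path; here one checks that the detour vertices cannot collide with the far endpoint precisely because their colors differ from $c$, so simplicity is preserved. Parts~(2) and~(3) run on the same template in the directed setting, using an out-neighbor to \emph{leave} a $c$-colored source and an in-neighbor to \emph{enter} a $c$-colored target, with the strong-connectivity case in part~(2) handled by treating the $u\to v$ and $v\to u$ dipaths separately.

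Part~(3) carries one extra simplification I would exploit: since $r$ has a \emph{unique} color, the only vertex whose color equals $\mathrm{color}(r)$ is $r$ itself. Consequently, when $c=\mathrm{color}(r)$ every $r\to v$ dipath is automatically internally-$\{c\}$-avoiding, so the vertex-color-avoiding hypothesis suffices verbatim; and when $c=\mathrm{color}(v)$ for some $v\neq r$, uniqueness forces $c\neq\mathrm{color}(r)$, so the $r\to w$ subpath needed after detouring to $v$'s differently-colored in-neighbor $w$ lands exactly in the good case. This is why part~(3) needs only the in-neighbor hypothesis at non-root vertices rather than a two-sided condition. I expect the main obstacle to be purely the bookkeeping of endpoint-coincidence in the detour construction — verifying that the appended neighbor is genuinely an interior vertex, does not reintroduce color $c$, and keeps the concatenated path simple — rather than any conceptual difficulty.
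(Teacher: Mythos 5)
Your argument is correct. Note that the paper itself offers no proof of this lemma: it cites a previous work for statement (1) and asserts that the directed versions follow by "natural extension," so there is no in-paper argument to compare against — but what you give is exactly the argument that extension would consist of. The forward implication is indeed the containment already recorded before Figure~\ref{figure:example_vertex_definitions_difference}, and your converse handles the only nontrivial case correctly: when an endpoint has the forbidden color $c$, the detour neighbor $w$ has color $\ne c$, the good-case path joining $w$ to the other terminal (or to the other detour vertex $w'$) contains \emph{no} vertex of color $c$ at all (its interior by definition, its endpoints by choice), so the $c$-colored endpoints cannot lie on it and the concatenation is automatically a simple path whose interior avoids $c$. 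Your observations that part (2) needs an out-neighbor to leave a $c$-colored source and an in-neighbor to enter a $c$-colored target, and that in part (3) the uniqueness of $r$'s color both disposes of the case $c=\mathrm{color}(r)$ outright and guarantees that the pair $(r,w)$ falls in the good case when $c=\mathrm{color}(v)$, are precisely the points that justify the asymmetric hypotheses in the three statements. The only items worth spelling out in a full write-up are the degenerate coincidences you already flag (e.g.\ $w=w'$, or the detour neighbor equalling the far endpoint when that endpoint is not $c$-colored), all of which collapse to shorter valid paths.
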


\begin{table}[H] 
{\small
\begin{center}
 \setlength{\tabcolsep}{2.75pt}
 \renewcommand{\arraystretch}{1.1}
 \captionsetup{type=table}
 \begin{tabular}{!{\vrule width 1.25pt} c !{\vrule width 1.25pt} c|c|c !{\vrule width 1.25pt}} \specialrule{1.25pt}{0pt}{0pt}
   & \multicolumn{2}{c}{If after the removal of} & \\
   & the edges of & \multicolumn{2}{c !{\vrule width 1.25pt}}{\multirow{2}{*}{the vertices of any at most $\ell$ colors}} \\
   & any at most $\ell$ colors, \ldots & \multicolumn{2}{c !{\vrule width 1.25pt}}{} \\
   &  & $u$ or $v$ is removed, or \ldots & excluding $u$ and $v$, \ldots \\ \specialrule{1.25pt}{0pt}{0pt}
   \ldots there exist at least $k$ pairwise & \multirow{3}{*}{\parbox{0.2\textwidth}{\centering edge-$\ell$-color-avoiding \\ $k$-edge-connected.}} & \multirow{3}{*}{\parbox{0.2\textwidth}{\centering vertex-$\ell$-color-avoiding \\ $k$-edge-connected.}} & \multirow{3}{*}{\parbox{0.2\textwidth}{\centering internally vertex-$\ell$-color-avoiding \\ $k$-edge-connected.}} \\
   edge-disjoint $u$-$v$ path, & & & \\
   then $u$ and $v$ are & & & \\ \hline
   \ldots there exist at least $k$ pairwise  & \multirow{3}{*}{\parbox{0.2\textwidth}{\centering edge-$\ell$-color-avoiding \\ $k$-vertex-connected.}}  & \multirow{3}{*}{\parbox{0.2\textwidth}{\centering vertex-$\ell$-color-avoiding \\ $k$-vertex-connected.}} & \multirow{3}{*}{\parbox{0.2\textwidth}{\centering internally vertex-$\ell$-color-avoiding \\ $k$-vertex-connected.}}\\
   internally vertex-disjoint $u$-$v$ path, & & & \\
   then $u$ and $v$ are & & & \\ \hline
   \ldots there exist at least $k$ pairwise  & \multirow{5}{*}{\parbox{0.2\textwidth}{\centering arc-$\ell$-color-avoiding \\ strongly $k$-arc-connected.}} & \multirow{5}{*}{\parbox{0.2\textwidth}{\centering vertex-$\ell$-color-avoiding \\ strongly $k$-arc-connected.}}& \multirow{5}{*}{\parbox{0.2\textwidth}{\centering internally vertex-$\ell$-color-avoiding \\ strongly $k$-arc-connected.}} \\
   arc-disjoint $u$-$v$ dipath, and & & & \\
   at least $k$ pairwise & & & \\
   arc-disjoint $v$-$u$ dipath, & & & \\
   then $u$ and $v$ are & & &  \\ \hline
   \ldots there exist at least $k$ pairwise  & \multirow{5}{*}{\parbox{0.2\textwidth}{\centering arc-$\ell$-color-avoiding \\ strongly $k$-vertex-connected.}} & \multirow{5}{*}{\parbox{0.2\textwidth}{\centering vertex-$\ell$-color-avoiding \\ strongly $k$-vertex-connected.}} & \multirow{5}{*}{\parbox{0.2\textwidth}{\centering internally vertex-$\ell$-color-avoiding \\ strongly $k$-vertex-connected.}} \\
   internally vertex-disjoint $u$-$v$ dipath, & & & \\
   and at least $k$ pairwise & & & \\
   internally vertex-disjoint $v$-$u$ dipath, & & & \\
   then $u$ and $v$ are & & & \\ \specialrule{1.25pt}{0pt}{0pt}
 \end{tabular}
 \captionof{table}{Summary of the definitions of the color-avoiding connectivity of two vertices $u$ and $v$ presented in Section~\ref{section:preliminaries}.}
 \label{table:summary_def_two_vertices}
\end{center}
}

\vspace{-13pt}
\end{table}

\begin{figure}[!hb]
\centering
\begin{tikzpicture}[scale=0.9]
 \tikzstyle{vertex}=[draw,circle,fill,minimum size=10,inner sep=0]

 \tikzset{paint/.style={draw=#1!50!black, fill=#1!50}, decorate with/.style = {decorate, decoration={shape backgrounds, shape=#1, shape size = 5pt, shape sep = 6pt}}}
 \tikzset{edge_red/.style={postaction={decorate, decoration={markings, mark=between positions 3pt and 1-2pt step 7pt with {\draw[red!50!black,thin,fill=red!50] (30:0.08) -- (150:0.08) -- (210:0.08) -- (330:0.08) -- (30:0.08);}}}}}
 \tikzset{edge_blue/.style={postaction={decorate, decoration={markings, mark=between positions 5pt and 1-2pt step 6pt with {\draw[blue!50!black,thin,fill=blue!50] (0:0.08) -- (120:0.08) -- (240:0.08) -- (0:0.08);}}}}}
 \tikzset{edge_green/.style={postaction={decorate, decoration={markings, mark=between positions 5pt and 1-2pt step 8.5pt with {\draw[green!50!black,thin,fill=green!50] (0:0.16) -- (90:0.08) -- (180:0.08) -- (270:0.08) -- (0:0.16);}}}}}
 
 \tikzstyle{vertex_red}=[draw, thick, red!50!black, shape=rectangle, fill=red!50, minimum size=8pt, inner sep=0]
  \tikzstyle{vertex_blue}=[draw, thick, blue!50!black, shape=regular polygon, regular polygon sides=3, fill=blue!50, minimum size=12pt, inner sep=0]
  \tikzstyle{vertex_green}=[draw, green!50!black, kite, fill=green!50, minimum size=11pt, inner sep=0.1pt]

 \begin{scope}[scale=0.7]
 \node[vertex] (r) at (0,0) [label={left:$r$}] {};
 \node[vertex] (a) at (2.5,1.5) {};
 \node[vertex] (b) at (2.5,-1.5) {};
 \node[vertex] (c) at (5,0) {};
 
 \path[edge_green] (r) -- ($(r)!0.8!(a)$);
 \draw[-{Latex[black,length=10pt, width=10pt]}] ($(r)!0.9!(a)$) -- (a);
 \path[edge_red] (r) -- ($(r)!0.8!(b)$);
 \draw[-{Latex[black,length=10pt, width=10pt]}] ($(r)!0.9!(b)$) -- (b);
 \path[edge_red] (a) -- ($(a)!0.8!(c)$);
 \draw[-{Latex[black,length=10pt, width=10pt]}] ($(a)!0.9!(c)$) -- (c);
 \path[edge_blue] (b) -- ($(b)!0.8!(a)$);
 \draw[-{Latex[black,length=10pt, width=10pt]}] ($(b)!0.9!(a)$) -- (a);
 \path[edge_green] (c) -- ($(c)!0.8!(b)$);
 \draw[-{Latex[black,length=10pt, width=10pt]}] ($(c)!0.9!(b)$) -- (b);
 \end{scope}
 
 \begin{scope}[scale=0.75]
 \clip (-1,2.5) -- (5,2.5) -- (5,0.6) -- (-1,-1) -- cycle;
 \path[edge_blue, bend left=75, looseness=1.5] (r) to (c);
 \end{scope}
 \begin{scope}[scale=0.75,shift={(-0.05,0.15)}]
 \clip (4.9,0.55) -- (4.5,0.48) -- (5,0) -- (5.2,0.65) -- cycle;
 \draw[-{Latex[black,length=10pt, width=10pt]}, bend left=75, looseness=1.5] (r) to (c);
 \end{scope}
 
 \begin{scope}[shift={(10,0)}]
 \node[vertex_red] (v1) at (0,0) [label=below: $r$] {};
 \node[vertex_red] (v2) at (1.5,0) [label=below: $v$] {};
 \node[vertex_red] (v3) at (3,0) [label=below: $w$] {};
  
 \draw[very thick, -{Latex[black,length=7pt, width=7pt]}] (v1) -- (v2);
 \draw[very thick, -{Latex[black,length=7pt, width=7pt]}] (v2) -- (v3);
 \end{scope}
\end{tikzpicture}
\caption{Examples for arc-$\ell$-color-avoiding $r$-rooted $k$-arc-connectivity. The first digraph is arc-1-color-avoiding $r$-rooted 1-connected but not arc-1-color-avoiding $r$-rooted 2-arc-connected: after the removal of arcs of any single color, the digraph remains $r$-rooted connected, but not $r$-rooted 2-arc-connected. The second graph is vertex-1-color-avoiding $r$-rooted 1-connected but not internally vertex-1-color-avoiding $r$-rooted connected: there exist no internally vertex-red-avoiding $r$-$w$ dipath (the color red is denoted by squares).}
\label{fig:rootedexample}
\end{figure}

\vspace{-10pt}

\section{Color-avoiding connected colorings} \label{section:colorings}

In this section, we study the problem of finding a color-avoiding connected coloring (if exists) of an uncolored (directed or undirected) graph with minimum number of colors.

\subsection{Edge-color-avoiding connected colorings}

First, let us consider edge-colorings.

\subsubsection{On the existence of edge-color-avoiding connected colorings}

In the following, we give a necessary and sufficient condition for the existence of edge-color-avoiding connected colorings of a graph, and for the existence of arc-color-avoiding strongly or rooted connected colorings of a digraph. Let us start with the case of edge-color-avoiding $k$-edge-connectivity.

\begin{proposition} \label{edge-prop-1}
 Let $k, \ell \in \mathbb{Z}_+$.
 
 \begin{enumerate}[label=(\arabic*), topsep=2pt, itemsep=0pt]
  \item \label{item1:edge-prop-1} \linkdest{linkdest:item1:edge-prop-1}{} A graph $G$ has an edge-$\ell$-color-avoiding $k$-edge-connected coloring if and only if $G$ is $(k+\ell)$-edge-connected.
 
  \item \label{item2:edge-prop-1} A digraph $D$ has an arc-$\ell$-color-avoiding strongly $k$-arc-connected coloring if and only if $D$ is strongly $(k+\ell)$-arc-connected.

  \item \label{item3:edge-prop-1} \linkdest{linkdest:item3:edge-prop-1}{} A digraph $D$ with a given vertex $r \in V(D)$ has an arc-$\ell$-color-avoiding $r$-rooted $k$-arc-connected coloring if and only if $D$ is $r$-rooted $(k+\ell)$-arc-connected.
 \end{enumerate}
\end{proposition}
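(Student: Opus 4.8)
The plan is to prove all three ``if and only if'' statements by the same two-part strategy: I treat the undirected case~\ref{item1:edge-prop-1} in full, using Menger's theorem to translate between connectivity and cut sizes together with the cut characterization of Proposition~\ref{prop:equiv_def_of_ECA_2}, item~\ref{item2:iii}, and then indicate how the directed Menger theorem makes the arguments for~\ref{item2:edge-prop-1} and~\ref{item3:edge-prop-1} go through verbatim. (Throughout I assume the non-degenerate setting; the one-vertex cases with loops are checked directly and add nothing.)

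For the ``if'' direction of~\ref{item1:edge-prop-1}, suppose $G$ is $(k+\ell)$-edge-connected and color every edge with its own distinct color. Removing the edges of any at most $\ell$ colors then deletes at most $\ell$ edges, so by Menger the minimum edge cut between any two vertices drops from at least $k+\ell$ to at least $k$; hence the remaining graph is $k$-edge-connected and this unique coloring is edge-$\ell$-color-avoiding $k$-edge-connected. (This is exactly Remark~\ref{remark:spec_colorings}.3.) In particular a valid coloring exists.

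For the ``only if'' direction I argue the contrapositive: if $G$ is not $(k+\ell)$-edge-connected, then no coloring $f$ can be edge-$\ell$-color-avoiding $k$-edge-connected. By Menger there is an edge cut $F$ with $|F| \le k+\ell-1$. Given any $f$, I split $F$ into $E' \cup E''$ according to its color classes: writing $m = \big|f(F)\big|$, if $m \le \ell$ I take $E'' = F$ and $E' = \emptyset$, while if $m > \ell$ I let $E''$ be the union of the $\ell$ largest color classes and $E'$ the union of the remaining $m-\ell$ classes. The key computation is that the $\ell$ kept classes are nonempty, so $|E''| \ge \ell$ and therefore $|E'| = |F| - |E''| \le (k+\ell-1) - \ell = k-1$, while $\big|f(E'')\big| \le \ell$ by construction. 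Thus $F = E' \cup E''$ is an edge cut violating the condition in Proposition~\ref{prop:equiv_def_of_ECA_2}.\ref{item2:iii}, so $f$ is not valid. This color-class decomposition is the heart of the proof and the one step requiring care; everything else is bookkeeping.

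For~\ref{item2:edge-prop-1} and~\ref{item3:edge-prop-1} the same scheme applies with directed Menger. For the ``if'' directions, the unique arc-coloring survives the deletion of any at most $\ell$ colors because at most $\ell$ arcs vanish, so every ordered-pair arc-connectivity (respectively every $r$-to-$v$ arc-connectivity) drops from at least $k+\ell$ to at least $k$. For the ``only if'' directions, if $D$ fails to be strongly $(k+\ell)$-arc-connected (respectively $r$-rooted $(k+\ell)$-arc-connected), directed Menger supplies an ordered pair $(u,v)$ (respectively a vertex $v \ne r$) together with an arc set $F$ with $|F| \le k+\ell-1$ whose removal destroys every $u$-to-$v$ dipath (respectively every $r$-to-$v$ dipath). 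Applying the identical color-class decomposition $F = E' \cup E''$, note that $f(E') \cap f(E'') = \emptyset$, so after deleting the at most $\ell$ colors of $E''$ the arcs of $E'$ remain; deleting these at most $k-1$ arcs then kills all $u$-to-$v$ (respectively $r$-to-$v$) dipaths, so the color-deleted digraph is not strongly $k$-arc-connected (respectively not $r$-rooted $k$-arc-connected), and $f$ is not valid. The only obstacle worth flagging is the same size estimate $|E'| \le k-1$, which holds exactly as in the undirected case.
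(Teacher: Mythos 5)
Your proposal is correct and follows essentially the same route as the paper: the ``if'' direction via the all-distinct-colors coloring, and the ``only if'' direction via the contrapositive, taking a small edge cut $F$ with $|F|\le k+\ell-1$ and splitting it into $\ell$ color classes plus at most $k-1$ leftover edges (the paper picks an arbitrary $\ell$-subset of the colors of $F$ rather than the $\ell$ largest classes, but the counting is identical since each chosen class contributes at least one edge). The directed cases are likewise dispatched as ``analogous'' in both treatments.
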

\begin{proof}
 We only prove statement~\ref{item1:edge-prop-1} since \ref{item2:edge-prop-1} and \ref{item3:edge-prop-1} can be proved analogously.

 \medskip

 First, assume that $G$ is not $(k+\ell)$-edge-connected, i.e., there exists a set of edges $E' \subseteq E$ with $|E'| \le k+\ell-1$ for which $G-E'$ is not connected. Consider an arbitrary edge-coloring of $G$, and let $C'$ denote the set of colors of the edges in $E'$. Let $C'' \subseteq C'$ be an arbitrary subset with $|C''| = \min \big( \ell, |C'| \big)$. Observe that there are at most $k-1$ edges in $E'$ that have different colors from those in $C''$. Thus the removal of all the edges of color $C''$ (which means the removal of at most $\ell$ colors) and the removal of the remaining edges in $E'$ (which means the removal of at most $k-1$ further edges) disconnects $G$, thus $G$ is not edge-$\ell$-color-avoiding $k$-edge-connected.

 \smallskip
 
 Now assume that $G$ is $(k+\ell)$-edge-connected. Then assigning a unique color to each edge results in an edge-$\ell$-color-avoiding $k$-edge-connected graph since in this case the removal of the edges of at most $\ell$ colors removes at most $\ell$ edges.
\end{proof}

The next proposition is the analogue of Proposition~\ref{edge-prop-1} for $k$-vertex-connectivity.

\begin{proposition} \label{edge-prop-2}
  Let $k, \ell \in \mathbb{Z}_+$.
 
 \begin{enumerate}[label=(\arabic*), topsep=2pt, itemsep=0pt]
  \item \label{item1:edge-prop-2} \linkdest{linkdest:item1:edge-prop-2}{} A graph $G = (V,E)$ has an edge-$\ell$-color-avoiding $k$-vertex-connected coloring if and only if $G - E'$ is $k$-vertex-connected for any $E' \subseteq E$ with $|E'| \le \ell$.
 
  \item \label{item2:edge-prop-2} A digraph $D=(V,E)$ has an arc-$\ell$-color-avoiding strongly $k$-vertex-connected coloring if and only if $D - E'$ is strongly $k$-vertex-connected for any $E' \subseteq E$ with $|E'| \le \ell$.

  \item \label{item3:edge-prop-2} A digraph $D=(V,E)$ with a given vertex $r \in V$ has an arc-$\ell$-color-avoiding $r$-rooted $k$-vertex-connected coloring if and only if $D - E'$ is $r$-rooted $k$-vertex-connected for $E' \subseteq E$ with $|E'| \le \ell$.
 \end{enumerate}
\end{proposition}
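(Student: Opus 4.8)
The plan is to follow the template of the proof of Proposition~\ref{edge-prop-1}: I would prove only statement~\ref{item1:edge-prop-2} and remark that \ref{item2:edge-prop-2} and \ref{item3:edge-prop-2} go through verbatim once ``$k$-vertex-connected'' is replaced by its strong or $r$-rooted counterpart. The one structural fact I would isolate and use throughout is the \emph{monotonicity} of $k$-vertex-connectivity under the addition of edges: if $H_1$ and $H_2$ are graphs on the same vertex set with $E(H_1) \subseteq E(H_2)$ and $H_1$ is $k$-vertex-connected, then so is $H_2$, because every family of internally vertex-disjoint paths witnessing the connectivity of $H_1$ survives in $H_2$. The analogous statements hold for strong and for $r$-rooted $k$-vertex-connectivity of digraphs under the addition of arcs, which is exactly what makes the two directed cases identical to the undirected one.

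For the ``if'' direction I would take the coloring assigning a distinct color to every edge. Under it, deleting the edges of any at most $\ell$ colors deletes a set $E'$ of at most $\ell$ edges, and by hypothesis $G - E'$ is $k$-vertex-connected; hence the coloring is edge-$\ell$-color-avoiding $k$-vertex-connected. This mirrors the second half of the proof of Proposition~\ref{edge-prop-1}.

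For the ``only if'' direction, suppose $f$ is an edge-$\ell$-color-avoiding $k$-vertex-connected coloring and fix any $E' \subseteq E$ with $|E'| \le \ell$. Then $f(E')$ consists of at most $\ell$ colors, so by definition the graph obtained from $G$ by deleting all edges whose color lies in $f(E')$ is $k$-vertex-connected; call this deleted set $\widehat{E} \supseteq E'$. Since $G - \widehat{E}$ and $G - E'$ share the vertex set $V$ and $E(G - \widehat{E}) \subseteq E(G - E')$, the monotonicity observation promotes the $k$-vertex-connectivity of $G - \widehat{E}$ to that of $G - E'$, which is precisely the required conclusion.

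I expect the only genuine subtlety --- and the reason this proposition is phrased so differently from Proposition~\ref{edge-prop-1} --- is that no clean numerical characterization such as ``$(k+\ell)$-vertex-connected'' is available: in the mixed cuts that govern vertex-connectivity a deleted vertex and a deleted edge are not interchangeable, so the additive bound $k+\ell$ that worked for edge cuts via Menger's theorem has no vertex-connectivity analogue. Consequently the condition must be stated as a whole family of deletions, and the crux of the argument is exactly the monotonicity step that lets one replace ``delete all edges of the chosen colors'' by ``delete only the $\le \ell$ edges of $E'$''. I would also verify that the paper's nonstandard small-graph conventions for $k$-vertex-connectivity (the one- and two-vertex cases) are compatible with the monotonicity claim, but these reduce to immediate checks.
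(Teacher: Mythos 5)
Your proposal is correct and follows essentially the same route as the paper: the ``if'' direction via the all-distinct-colors coloring is identical, and your ``only if'' direction (delete all edges colored with the at most $\ell$ colors of $f(E')$, then invoke monotonicity of $k$-vertex-connectivity under edge addition) is just the direct form of the paper's contrapositive argument, which exhibits a mixed cut witnessing the failure. The monotonicity observation you isolate is sound and is implicitly what the paper relies on as well.
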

\begin{proof}
 We only prove statement~\ref{item1:edge-prop-2} since \ref{item2:edge-prop-2} and \ref{item3:edge-prop-2} can be proved analogously.

 \medskip
 
 First, assume that there exists a set of edges $E' \subseteq E$ with $|E'| \le \ell$ for which $G-E'$ is not $k$-vertex-connected, i.e., there exists a mixed cut $V'' \cup E''$ in $G-E'$ such that $|V''| + |E''| \le k-1$. Consider an arbitrary edge-coloring of~$G$, and let $C'$ denote the set of the colors of the edges in $E'$. Then $|C'| \le \ell$ holds, and the removal of all the edges with colors in $C'$ and the removal of the vertices and edges of $V'' \cup E''$ disconnects~$G$, thus $G$ is not edge-$\ell$-color-avoiding $k$-vertex-connected.

 \smallskip
 
 Now assume that $G - E'$ is $k$-vertex-connected for any $E' \subseteq E$ with $|E'| \le \ell$. Then assigning a unique color to each edge results in an edge-$\ell$-color-avoiding $k$-vertex-connected graph.
\end{proof}

As we mentioned earlier, a set forms a cut in a matroid if and only if it is minimal for the property that its removal decreases the rank of the matroid. Using this observation, we can easily generalize the common case $k=1$ of Propositions~\hyperlink{linkdest:item1:edge-prop-1}{\ref*{edge-prop-1}.\ref*{item1:edge-prop-1}} and~\hyperlink{linkdest:item1:edge-prop-2}{\ref*{edge-prop-2}.\ref*{item1:edge-prop-2}} to matroids.

\begin{proposition} \label{prop:colorability_of_matroids}
Let $\ell \in \mathbb{Z}_+$. A matroid $\mathcal{M}$ has an $\ell$-courteous coloring if and only if $\mathcal{M}$ does not contain a cut of size at most $\ell$. 
\end{proposition}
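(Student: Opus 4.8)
The plan is to mirror the proof of Proposition~\ref{edge-prop-1}, using the characterization (recalled just before the statement) of a cut as a minimal set whose deletion decreases the rank, together with the monotonicity of matroid rank under deletion. I would prove both implications by contraposition, and then note that everything can also be read off directly from Proposition~\ref{prop:equiv_def_of_courteous}.

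For the ``only if'' direction, suppose $\mathcal{M}$ has a cut $B$ with $|B| \le \ell$. Fix an arbitrary coloring of the ground set and let $C''$ be the set of colors appearing on $B$, so that $|C''| \le |B| \le \ell$. Let $Y$ be the set of all elements whose color lies in $C''$; then $B \subseteq Y$, and since deleting elements cannot increase the rank, $r(\mathcal{M} \setminus Y) \le r(\mathcal{M} \setminus B) < r(\mathcal{M})$, the last inequality holding because $B$ is a cut. Thus deleting the elements of at most $\ell$ colors strictly lowers the rank, so the coloring is not $\ell$-courteous; as the coloring was arbitrary, $\mathcal{M}$ admits no $\ell$-courteous coloring.

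For the ``if'' direction, suppose $\mathcal{M}$ has no cut of size at most $\ell$, and assign each element its own distinct color. Deleting the elements of any at most $\ell$ colors then deletes a set $X$ with $|X| \le \ell$. If this deletion decreased the rank, then $X$ would contain an inclusion-wise minimal rank-decreasing subset, that is, a cut, whose size is at most $|X| \le \ell$, contradicting the assumption. Hence the rank is preserved under the deletion of any at most $\ell$ colors, so the all-distinct coloring is $\ell$-courteous.

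The only points needing care are the monotonicity step (deleting a superset of a cut still decreases the rank) and the observation that every rank-decreasing set contains a cut; neither is a genuine obstacle. Indeed, both directions also follow in one stroke from Proposition~\ref{prop:equiv_def_of_courteous}, which states that a coloring is $\ell$-courteous exactly when no cut uses at most $\ell$ colors: a cut of size at most $\ell$ uses at most $\ell$ colors under \emph{every} coloring, while if all cuts have size greater than $\ell$, then under the all-distinct coloring every cut uses more than $\ell$ colors. I therefore expect the complete argument to be very short.
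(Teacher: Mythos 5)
Your proposal is correct and, in its closing paragraph, is essentially identical to the paper's proof, which simply applies Proposition~\ref{prop:equiv_def_of_courteous} in both directions (a cut of size at most $\ell$ uses at most $\ell$ colors under any coloring; with all-distinct colors every cut of size greater than $\ell$ uses more than $\ell$ colors). The preceding rank-monotonicity argument is a valid but unnecessary unpacking of what that proposition already provides.
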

\begin{proof}
First, assume that $\mathcal{M} = (S, \mathcal{I})$ contains a cut $Q$ of size at most $\ell$. Suppose to the contrary that $\mathcal{M}$ has an $\ell$-courteous coloring. Then in this coloring, $Q$ is a cut consisting of elements colored with at most $\ell$ colors. Thus, by Proposition~\ref{prop:equiv_def_of_courteous}, we obtain a contradiction.

\smallskip

Now assume that $\mathcal{M}$ does not contain a cut of size at most $\ell$. Then by Proposition~\ref{prop:equiv_def_of_courteous}, assigning a unique color to each element results in an $\ell$-courteous coloring of $\mathcal{M}$.
\end{proof}

\subsubsection{On the minimum number of colors in courteous colorings}

Since $(k+1)$-edge-connected graphs and strongly $(k+1)$-arc-connected digraphs can be recognized in polynomial time, Propositions~\ref{edge-prop-1} and~\ref{edge-prop-2} imply that we can also decide in polynomial time whether a given graph or digraph has an edge- or arc-1-color-avoiding $k$-edge- or strongly $k$-arc-connected coloring, respectively. In the following, we prove that an edge-1-color-avoiding 1-connected coloring (if exists) with minimum number of colors can also be found in polynomial time using some matroid theoretical techniques by considering the given undirected graph as a graphic matroid. Since this proof naturally extends to nongraphic matroids as well, we only present the general proof for arbitrary matroids. Note that by Proposition~\ref{prop:colorability_of_matroids}, a matroid has a 1-courteous coloring if and only if it is bridgeless.

The following observation establishes a connection between the chromatic number and 1-courteous colorings of matroids.

\begin{proposition}
 Let $\mathcal{M}$ be a bridgeless matroid. The minimum number of colors needed for a 1-courteous coloring of $\mathcal{M}$ is equal to the chromatic number of $\mathcal{M}^*$.
\end{proposition}
\begin{proof}
 To see this, we show that a coloring of $\mathcal{M}$ is 1-courteous if and only if it is a proper coloring of $\mathcal{M}^*$. This directly follows from Proposition~\ref{prop:equiv_def_of_courteous}: a coloring of $\mathcal{M}$ is 1-courteous if and only if there are no monochromatic cycles in $\mathcal{M}^*$, which is equivalent to that this coloring is a proper coloring of~$\mathcal{M}^*$.
\end{proof}

As a direct consequence, we obtain the following result on the minimum number of colors in edge-1-color-avoiding connected colorings of graphs.

\begin{corollary} \label{chromatic_number_of_dual}
 The minimum number of colors in an edge-1-color-avoiding connected coloring of a 2-edge-connected graph $G$ is equal to the chromatic number of the dual matroid $\mathcal{M}^*(G)$ of $G$.
\end{corollary}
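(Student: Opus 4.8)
The plan is to deduce the statement directly from the preceding proposition by passing to the graphic matroid $\mathcal{M}(G)$ and tracking how the relevant notions translate between a connected graph and its cycle matroid. First I would observe that, since $G$ is 2-edge-connected, it is in particular connected and contains no cut edge; equivalently, $\mathcal{M}(G)$ contains no cut of size $1$, i.e.\ it is bridgeless (recall from the preliminaries that for a connected graph the cuts of $\mathcal{M}(G)$ are exactly the edge cuts of $G$). Hence $\mathcal{M} = \mathcal{M}(G)$ satisfies the hypothesis of the previous proposition, which then gives that the minimum number of colors in a $1$-courteous coloring of $\mathcal{M}(G)$ equals $\chi\big(\mathcal{M}^*(G)\big)$.

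Next I would identify $1$-courteous colorings of $\mathcal{M}(G)$ with edge-$1$-color-avoiding connected colorings of $G$. This is precisely the content of the remark stated after the definition of $\ell$-courteous colorings: a graphic matroid is $1$-courteously colored if and only if every connected component of the underlying graph is edge-$1$-color-avoiding connected. As $G$ is connected, a coloring of $E(G)$ is a $1$-courteous coloring of $\mathcal{M}(G)$ exactly when it is an edge-$1$-color-avoiding connected coloring of $G$; in particular, the two minimum color numbers agree. Combining this with the first paragraph yields that the minimum number of colors in an edge-$1$-color-avoiding connected coloring of $G$ equals $\chi\big(\mathcal{M}^*(G)\big)$, as claimed.

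Since the corollary is flagged as a direct consequence, I do not expect a genuine obstacle; the only care needed is in checking that the two dictionaries line up. Concretely, one should confirm that the edge cuts of the connected graph $G$ correspond bijectively to the cuts of $\mathcal{M}(G)$, so that the condition ``no monochromatic edge cut'' characterizing edge-$1$-color-avoiding connectivity (Proposition~\ref{prop:equiv_def_of_ECA}) matches the condition ``no monochromatic cut'' characterizing $1$-courteousness (Proposition~\ref{prop:equiv_def_of_courteous}), and that 2-edge-connectivity of $G$ is exactly the bridgelessness of $\mathcal{M}(G)$ needed to invoke the previous proposition.
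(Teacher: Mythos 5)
Your proposal is correct and follows exactly the route the paper intends: the corollary is stated as a direct specialization of the preceding proposition to the graphic matroid $\mathcal{M}(G)$, using that 2-edge-connectivity of $G$ makes $\mathcal{M}(G)$ bridgeless and that 1-courteous colorings of $\mathcal{M}(G)$ coincide with edge-1-color-avoiding connected colorings of the connected graph $G$. The dictionary checks you flag (edge cuts of $G$ versus cuts of $\mathcal{M}(G)$, and Proposition~\ref{prop:equiv_def_of_ECA} versus Proposition~\ref{prop:equiv_def_of_courteous}) are precisely the ones the paper relies on implicitly.
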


Since an optimal coloring of a matroid can be found in polynomial time given an independence oracle for the matroid~\cite{article:matroid_chromatic_number}, we obtain the following.

\begin{theorem} \label{chromatic_number_of_matroid_polytime}
 A 1-courteous coloring with minimum number of colors for a bridgeless matroid can be found in polynomial time given an independence oracle for the matroid. In particular, an edge-1-color-avoiding connected coloring with minimum number of colors for a 2-edge-connected graph $G$ can be found in polynomial time.
\end{theorem}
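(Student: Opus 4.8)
The plan is to reduce the problem to finding an optimal proper coloring of the dual matroid $\mathcal{M}^*$. By the preceding proposition, a coloring of $\mathcal{M}$ is $1$-courteous if and only if it is a proper coloring of $\mathcal{M}^*$, and the minimum number of colors in a $1$-courteous coloring of $\mathcal{M}$ equals $\chi(\mathcal{M}^*)$ (which is well-defined, since $\mathcal{M}$ bridgeless means $\mathcal{M}^*$ is loopless). Hence it suffices to produce, in polynomial time, a proper coloring of $\mathcal{M}^*$ using $\chi(\mathcal{M}^*)$ colors; read back as a coloring of the ground set $S$, such a coloring is automatically a minimum $1$-courteous coloring of $\mathcal{M}$. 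By the cited result of~\cite{article:matroid_chromatic_number}, an optimal proper coloring of a matroid can be found in polynomial time provided one has an independence oracle for that matroid.

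The one point that requires care is that we are given an independence oracle for $\mathcal{M}$, whereas the coloring algorithm needs an oracle for the dual $\mathcal{M}^*$. I would therefore show that a single independence query in $\mathcal{M}^*$ can be answered with polynomially many independence queries in $\mathcal{M}$. First, an independence oracle for $\mathcal{M}$ simulates a rank oracle for $\mathcal{M}$ in polynomial time: to compute $r(Y)$ for $Y \subseteq S$, process the elements of $Y$ greedily, retaining an element exactly when it preserves independence, and return the size of the set built. Next, by the definition of the dual, a set $X \subseteq S$ is independent in $\mathcal{M}^*$ precisely when $S-X$ contains a basis of $\mathcal{M}$, i.e.\ when $r(S-X) = r(S)$. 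Thus each membership test in $\mathcal{I}^*$ reduces to two rank computations in $\mathcal{M}$, hence to polynomially many calls to the given independence oracle. Feeding this simulated oracle into the algorithm of~\cite{article:matroid_chromatic_number} yields the desired polynomial-time procedure; I expect this oracle-simulation step to be the only nontrivial part, as everything else is a direct translation (and the interchangeability of oracle types up to polynomial overhead was already noted in the preliminaries).

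For the graph statement, let $G$ be a $2$-edge-connected graph. Then $\mathcal{M}(G)$ is bridgeless, since a bridge of the matroid is exactly a bridge (cut-edge) of $G$, and its independence oracle is trivially implementable in polynomial time, as a set of edges is independent precisely when it is acyclic. Applying the general result to $\mathcal{M}(G)$ produces, in polynomial time, a proper coloring of $\mathcal{M}^*(G)$ with $\chi(\mathcal{M}^*(G))$ colors; by Corollary~\ref{chromatic_number_of_dual} this is exactly an edge-$1$-color-avoiding connected coloring of $G$ with the minimum number of colors.
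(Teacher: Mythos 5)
Your proposal is correct and follows essentially the same route as the paper, which derives the theorem directly from the equivalence between 1-courteous colorings of $\mathcal{M}$ and proper colorings of $\mathcal{M}^*$ together with the cited polynomial-time matroid coloring algorithm. Your explicit simulation of an independence oracle for $\mathcal{M}^*$ via rank computations in $\mathcal{M}$ is a welcome detail that the paper leaves implicit (it only remarks in the preliminaries that the standard oracle types are polynomially interchangeable), but it is the standard argument rather than a different approach.
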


Note that the minimum number of colors needed in an edge-1-color-avoiding connected coloring of a graph $G$ is
\begin{itemize}[topsep=1.5pt, itemsep=-2pt, label=--]
 \item 0 if and only if $G \simeq K_1$;
 \item 1 if and only if $G$ has only one vertex and at least one loop;
 \item 2 if and only if $G$ has two edge-disjoint spanning trees\footnote{Let us note that a theorem of Tutte~\cite{article:Tutte_edge_disjoint_spanning_trees} and Nash-Williams~\cite{article:NashWilliams_edge_disjoint_spanning_trees} provides a characterization of those graphs that contain $k \in \mathbb{Z}_+$ --- in particular, two --- pairwise edge-disjoint spanning trees: a graph $G$ has $k$ pairwise edge-disjoint spanning trees if and only if $|F| \ge k \cdot \big( c(G-F) - 1 \big)$ for any $F \subseteq E(G)$.}.
\end{itemize}

\medskip

Assuming that a list $L_s$ of allowed colors is assigned to each element $s$ of a loopless matroid $\mathcal{M}$, we say that $\mathcal{M}$ is properly colorable from these lists if there exists a proper coloring of $\mathcal{M}$ in which each element $s$ receives a color from its list $L_s$. The choice number of $\mathcal{M}$, denoted by $\mathrm{ch}(\mathcal{M})$, is the minimum number $k$ such that $\mathcal{M}$ is properly colorable from any lists of size at least $k$. By a theorem of Seymour~\cite{article:seymourlistcolormatroid}, we know that $\mathrm{ch}(\mathcal{M}) = \chi(\mathcal{M})$ holds for any loopless matroid $\mathcal{M}$. Therefore, a matroid can be 1-courteously colored from any lists of length at most $k$ if and only if $\chi(\mathcal{M}^*) \le k$.

We note that for any $\ell \in \mathbb{Z}_+$ with $\ell \ge 2$, the complexity of finding an $\ell$-courteous coloring with minimum number of colors is open~\cite{webpage:MEMMORG}. In particular, the complexity of finding an edge-$\ell$-color-avoiding connected coloring also remains open.

\subsubsection{A weighted version of edge-color-avoiding connected colorings}

Now let us consider the case when the edges of the graph have weights and we require that not only an arbitrary spanning tree remains in the graph after the removal of any color, but a ``cheap'' one (i.e., one whose weight is not larger than a given number). To show that the problem of determining the minimum number of colors in such an edge-coloring is NP-hard, we make use of the following theorem.

\begin{theorem}[Bang-Jensen et al.~\cite{article:min_spanning_trees}] \label{Bang-Jensen}
 Let $G$ be a graph, $w: E(G) \to \mathbb{R}$ be a weight function and $\omega \in \mathbb{R}$ be a number. Deciding whether $G$ contains two edge-disjoint spanning trees both of weight at most $\omega$ is NP-complete. The problem remains NP-complete even if $G$ can be obtained from a path by adding a parallel edge to each edge of this path.
\end{theorem}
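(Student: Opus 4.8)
The plan is to establish NP-completeness in the usual two steps. Membership in NP is routine: a certificate is a pair of edge subsets, and one checks in polynomial time that each is a spanning tree, that the two are edge-disjoint, and that each has weight at most $\omega$. The substance is therefore the hardness reduction, and I would reduce from \textsc{Partition}: given positive integers $c_1, \dots, c_n$ with total $T = \sum_{i} c_i$, decide whether some subset of indices has $c$-sum exactly $T/2$. This is a natural source since the two-trees-of-bounded-weight requirement is fundamentally a balancing condition.

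The gadget is exactly the restricted graph class named in the statement. I would start from a path on vertices $v_0, v_1, \dots, v_n$ and add a parallel edge to each path edge, so that between $v_{i-1}$ and $v_i$ there are two parallel edges $e_i, e_i'$. I assign weights $w(e_i) = c_i$ and $w(e_i') = 0$, and set $\omega = T/2$. Encoding the $c_i$ in binary, this construction is polynomial in the input size. The key structural observation is that in such a doubled path every spanning tree uses exactly one edge from each pair $\{e_i, e_i'\}$: the cut separating $v_{i-1}$ from $v_i$ consists of precisely those two edges, so at least one must be selected for connectivity, and since a spanning tree has exactly $n$ edges while there are $n$ pairs, exactly one edge per pair is used. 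Hence two edge-disjoint spanning trees together exhaust all $2n$ edges and therefore split each pair: one tree receives $e_i$, the other $e_i'$.

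For the equivalence, let $A \subseteq \{1, \dots, n\}$ denote the set of indices on which the first tree takes the weighted edge $e_i$. Then the first tree has weight $\sum_{i \in A} c_i$ and the second tree has weight $T - \sum_{i \in A} c_i$. Both are at most $\omega = T/2$ if and only if both equal $T/2$, i.e.\ if and only if $\sum_{i \in A} c_i = T/2$. Thus the graph contains two edge-disjoint spanning trees each of weight at most $\omega$ precisely when the \textsc{Partition} instance is a yes-instance. Combined with NP membership, this yields NP-completeness, and because the constructed $G$ is a path with every edge doubled, the hardness holds on the stated restricted class.

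I do not expect a deep combinatorial obstacle here; the reduction rests entirely on the structural lemma that identifies a spanning tree with a one-edge-per-pair choice and a pair of edge-disjoint spanning trees with a one--one split of the pairs, which converts the problem into balanced \textsc{Partition}. The only points requiring care are (i) confirming that the weights are encoded so the reduction is genuinely polynomial, which is fine since \textsc{Partition} is (weakly) NP-complete, and (ii) the boundary argument that $\sum_{i\in A} c_i \le T/2$ together with $T - \sum_{i\in A} c_i \le T/2$ forces exact balance. Everything else is verification.
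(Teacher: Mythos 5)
This statement is quoted from Bang-Jensen et al.\ and the paper under review gives no proof of it, so there is nothing internal to compare against; judged on its own, your reduction is correct and complete. Your gadget --- a path with every edge doubled, one edge of each pair carrying weight $c_i$ and the other weight $0$, with threshold $\omega = T/2$ --- together with the observation that every spanning tree picks exactly one edge per parallel pair and that two edge-disjoint spanning trees must split every pair, is essentially the argument of the cited source, as the restricted graph class singled out in the theorem statement already suggests.
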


By Theorem~\ref{Bang-Jensen}, we obtain the following.

\begin{corollary}\label{cor:weighted}
 Let $G$ be a graph, $w: E(G) \to \mathbb{R}$ be a weight function, let $\omega \in \mathbb{R}$ and $c, \ell \in \mathbb{Z}_+$. Deciding whether $G$ has an edge-$\ell$-color-avoiding 1-connected coloring with at most $c$ colors such that after the removal of any at most $\ell$ colors, there remains a spanning tree of weight at most $\omega$ is NP-complete. The problem remains NP-complete even if $\ell = 1$ and $c = 2$.
\end{corollary}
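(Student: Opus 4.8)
The plan is to reduce from the problem shown NP-complete in Theorem~\ref{Bang-Jensen}: given a graph $G$, a weight function $w$ and a number $\omega$, decide whether $G$ contains two edge-disjoint spanning trees both of weight at most $\omega$. I would like to show that this is precisely the question of whether $G$ admits an edge-$1$-color-avoiding $1$-connected coloring with at most $2$ colors such that, after removing either color class, a spanning tree of weight at most $\omega$ survives. So the reduction is essentially the identity map, and the whole content lies in connecting the combinatorial structure of such a $2$-coloring to a pair of edge-disjoint spanning trees.

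First I would recall the characterization from the bulleted remark following Theorem~\ref{chromatic_number_of_matroid_polytime}: a graph has an edge-$1$-color-avoiding connected coloring with exactly $2$ colors if and only if it has two edge-disjoint spanning trees. I would unpack the easy direction of this equivalence for the weighted setting. Suppose $G$ has two edge-disjoint spanning trees $T_1, T_2$ each of weight at most $\omega$. Color all edges of $T_1$ with color $1$ and all remaining edges (which include $T_2$) with color $2$. Removing color $1$ leaves a graph containing $T_2$, hence a spanning tree of weight at most $\omega$; removing color $2$ leaves $T_1$, again a spanning tree of weight at most $\omega$. So this coloring certifies a \textbf{yes} instance of the coloring problem with $c=2$ and $\ell=1$.

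For the reverse direction, suppose $G$ has an edge-$1$-color-avoiding $1$-connected coloring with at most $2$ colors in which, after deleting either color class, a spanning tree of weight at most $\omega$ remains. If only one color is actually used, then deleting that color would disconnect $G$ (as $G$ has at least two vertices), contradicting edge-$1$-color-avoiding connectivity; so exactly two colors appear, giving a partition $E(G) = E_1 \cup E_2$. By hypothesis, $G - E_1$ contains a spanning tree $T_2 \subseteq E_2$ of weight at most $\omega$, and $G - E_2$ contains a spanning tree $T_1 \subseteq E_1$ of weight at most $\omega$. Since $E_1$ and $E_2$ are disjoint, $T_1$ and $T_2$ are edge-disjoint spanning trees, each of weight at most $\omega$. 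This yields a \textbf{yes} instance of the Bang-Jensen et al.\ problem, completing the equivalence.

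The main point I expect to need care with is membership in NP and the exact correspondence of parameters. Membership is straightforward: a coloring together with the two surviving spanning trees is a polynomial-size certificate checkable in polynomial time. For the hardness, I would simply carry the parameters $c = 2$ and $\ell = 1$ through the reduction, and invoke the strengthening in Theorem~\ref{Bang-Jensen} that the two-edge-disjoint-spanning-trees problem stays NP-complete on the restricted graph class (a path with a parallel edge added to each edge); this automatically transfers the restriction to our coloring problem, so the ``even if $\ell = 1$ and $c = 2$'' clause follows for free. The only genuine subtlety is confirming that the ``at most $c$ colors'' phrasing in the corollary forces exactly two colors on a connected instance, which the argument above handles by ruling out the one-color case via connectivity.
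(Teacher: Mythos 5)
Your proposal is correct and follows essentially the same route as the paper: both reduce from Theorem~\ref{Bang-Jensen} via the identity map on the graph, observing that a two-color edge-$1$-color-avoiding $1$-connected coloring with cheap surviving spanning trees corresponds exactly to a pair of edge-disjoint spanning trees of weight at most $\omega$. Your write-up is somewhat more detailed (e.g., explicitly ruling out the one-color case), but the argument is the same.
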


\begin{proof}
 This problem is obviously in NP and now we show that it is NP-hard even for $\ell=1$ and $c=2$ by reducing the NP-complete problem described in Theorem~\ref{Bang-Jensen} to it. Let $G$ be an arbitrary graph, $\omega$ be an arbitrary number, and let $c=2$. Then $G$ has an edge-1-color-avoiding 1-connected coloring with two colors such that after the removal of any color there remains a spanning tree of weight at most $\omega$ if and only if $G$ contains two edge-disjoint spanning trees both of weight at most $\omega$ --- in which case, we color the edges of one of these spanning trees with one of the colors, and the edges of the other spanning tree with the other color, and we color arbitrarily the remaining edges.
\end{proof}

\subsubsection{On the minimum number of colors in edge-\texorpdfstring{$\boldsymbol{\ell}$}{}-color-avoiding \texorpdfstring{$\boldsymbol{k}$}{}-edge- or \texorpdfstring{$\boldsymbol{k}$}{}-vertex-con\-nected colorings with \texorpdfstring{$\boldsymbol{k \ge 2}$}{}}

Now let us consider the case when $k \ge 2$ and we want to decide whether a graph has an edge-$\ell$-color-avoiding $k$-edge- or $k$-vertex-connected coloring with a given number of colors. For this purpose, we rely on the following result.

\begin{theorem}[Kotzig \cite{article:Hamiltonian_decompositon_1}, Martin \cite{article:Hamiltonian_decompostion_2}] \label{thm:Kotzig}
 Deciding whether a 4-regular graph can be decomposed into two edge-disjoint Hamiltonian cycles is NP-complete.
\end{theorem}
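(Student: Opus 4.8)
The plan is to prove both membership in NP and NP-hardness. Membership is immediate: a decomposition into two edge-disjoint Hamiltonian cycles is certified by a partition $E(G) = E(R) \cup E(B)$ of the edge set into two spanning subgraphs, and one checks in polynomial time that each of $R$ and $B$ is a single cycle through every vertex. It is convenient to reformulate the target condition first. Since $G$ is $4$-regular, a decomposition into two Hamiltonian cycles is exactly a $2$-edge-colouring (red/blue) in which every vertex is incident to exactly two red and two blue edges and in which both the red and the blue subgraph form a single spanning cycle. Equivalently, in the language of transition systems, we seek a pairing of the four edges at each vertex into two pairs so that the resulting closed trails number exactly two and, at every vertex, the two pairs lie on different trails (so that each trail passes through each vertex exactly once). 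This reformulation isolates what really has to be forced.

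For NP-hardness I would reduce from the Hamiltonian cycle problem, which is NP-complete even on cubic graphs. Given a cubic instance $H$, the idea is to build a $4$-regular graph $G$ by replacing each vertex and each edge of $H$ by a constant-size gadget, wired together so that one colour class --- say red --- is forced to ``simulate'' a traversal of $H$: the choice of which incident gadget-edges are red at each vertex-gadget encodes whether the corresponding edge of $H$ is used, and the gadget is designed so that a consistent spanning red cycle exists precisely when $H$ admits a Hamiltonian cycle. The complementary blue class must then absorb all remaining edges, and the gadgets must be engineered so that this forced blue subgraph is automatically a single spanning cycle as well. A Hamiltonian cycle of $H$ would then be recovered from a red-blue Hamiltonian decomposition of $G$ by reading off which original edges were ``used'', and conversely.

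The main obstacle --- and the crux of the whole argument --- is exactly this simultaneity. Local gadgets easily control $2$-regularity of each colour class: indeed, by Petersen's theorem every $4$-regular graph is $2$-factorable, so balancing red and blue at each vertex is never the issue. What the gadgets must additionally guarantee are the \emph{global} conditions that each colour class is \emph{connected} and spans \emph{all} vertices, i.e.\ that neither the red nor the blue $2$-factor splits into two or more shorter cycles. Forcing both $2$-factors to be single Hamiltonian cycles at once using only local gadgetry is delicate: one typically needs ``connector'' gadgets that merge what would otherwise be separate cycles, together with a transition-system parity (or orientation) invariant ensuring the two colour classes stay separated at every vertex. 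I would handle this by choosing a gadget with a fixed internal Hamiltonian-decomposition behaviour and only a constant number of external ports, proving by case analysis that the only ways to $2$-colour the gadget consistently correspond to the intended encodings, and then verifying that no unintended short cycle can close up inside or across gadgets. That last verification --- ruling out spurious small cycles in either colour class --- is the longest and most error-prone part, and it is where I expect the real work of the proof to lie.
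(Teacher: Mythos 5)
This theorem is not proved in the paper at all; it is quoted from Kotzig and Martin (with NP-completeness then following from the NP-hardness of Hamiltonicity for cubic graphs), so there is no in-paper proof to compare against. Judged on its own, your proposal has a genuine gap: it is a plan rather than a proof. You correctly identify the right source problem (Hamiltonicity of cubic graphs) and the right difficulty (forcing \emph{both} colour classes to be single spanning cycles, not merely $2$-factors --- Petersen's theorem makes the local degree condition free), but you never exhibit a gadget, never prove the case analysis you promise, and you explicitly concede that ``the real work of the proof'' --- ruling out spurious short cycles in either colour class --- is left undone. A reduction whose correctness hinges entirely on an unconstructed, unverified gadget is not a proof.

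The missing idea that closes this gap is precisely the content of the cited Kotzig result: for a cubic graph $H$, the line graph $L(H)$ is $4$-regular, and $L(H)$ decomposes into two edge-disjoint Hamiltonian cycles if and only if $H$ is Hamiltonian. The line graph \emph{is} the gadget construction you are groping for --- each vertex of $H$ becomes a triangle in $L(H)$, and the transition-system bookkeeping you describe is exactly what Kotzig's equivalence packages up. Combining this with the NP-completeness of Hamiltonicity for cubic graphs (Garey, Johnson and Tarjan) yields the theorem in two lines, with no bespoke gadget engineering and no spurious-cycle analysis. If you want a self-contained argument, you would need to either prove Kotzig's equivalence or actually carry out the gadget construction and case analysis you outline; as written, the proposal establishes neither.
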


From this, we obtain the following consequence.

\begin{corollary} \label{cor:Kotzig}
 Let $G$ be a graph and let $c, k, \ell \in \mathbb{Z}_+$ with $k \ge 2$. Deciding whether $G$ has an edge-$\ell$-color-avoiding $k$-edge- or $k$-vertex-connected coloring with at most $c$ colors is NP-complete. The problem remains NP-complete even if $G$ is 4-regular, $\ell = 1$ and $c = k = 2$.
\end{corollary}
\begin{proof} 
 These problems are obviously in NP and now we show that they are NP-hard even for $\ell=1$ and $c=k=2$ and for 4-regular graphs by reducing the NP-complete problem described in Theorem~\ref{thm:Kotzig} to it. Let $G$ be an arbitrary 4-regular graph and let $\ell = 1$ and $c=k=2$. Then $G$ has an edge-1-color-avoiding 2-edge- or 2-vertex-connected coloring if and only if $G$ can be decomposed into two edge-disjoint Hamiltonian cycles~---~in which case, we color the edges of one of these Hamiltonian cycles with one of the colors, and the edges of the other Hamiltonian cycles with the other color.
\end{proof}

\subsubsection{On the minimum number of colors in arc-color-avoiding strongly connected colorings}

Now we focus on digraphs, and consider the problem of deciding whether a given digraph has an arc-color-avoiding strongly connected coloring with a given number of colors. For this, we make use of the following result.

\begin{theorem}[Yeo~\cite{book:decomposing_into_two_strong_spanning_subdigraphs}] \label{Yeo}
 Deciding whether a digraph has two arc-disjoint strongly connected spanning subdigraphs is NP-complete.
\end{theorem}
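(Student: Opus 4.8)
The plan is to prove the two halves of NP-completeness separately: membership in NP, which is routine, and NP-hardness, which is the substantive part and which I would establish by reduction from a known NP-complete satisfiability problem. For membership, I would take as certificate a pair of arc-disjoint arc subsets $A_1, A_2 \subseteq A(D)$; verifying that each spanning subdigraph $(V(D), A_i)$ is strongly connected costs only two reachability searches per subdigraph (forward and backward from a fixed vertex), so the whole check is polynomial and the problem lies in NP.

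Before building the reduction I would isolate the conceptual heart via a degree observation. In a digraph in which every vertex has in-degree and out-degree exactly $2$, any two arc-disjoint strong spanning subdigraphs must use, at each vertex, exactly one in-arc and one out-arc apiece (each $D_i$ needs in- and out-degree at least $1$ everywhere, and the total budget is tight); hence each $D_i$ is a spanning disjoint union of directed cycles that is also strongly connected, i.e.\ a directed Hamiltonian cycle, and the two together partition all arcs. On such inputs the problem is therefore exactly ``decompose the arc set into two directed Hamiltonian cycles,'' the directed analogue of the Kotzig--Martin problem (Theorem~\ref{thm:Kotzig}). This tells me the reduction should be engineered so that tight local degree constraints force each candidate subdigraph to trace prescribed routes, leaving only a binary choice at each variable.

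Concretely I would reduce from 3-SAT. For each variable I would install a gadget offering two arc-disjoint ``rails'' (a true-rail and a false-rail) so tightly constrained that any valid pair of subdigraphs must place one rail into $D_1$ and the other into $D_2$; this assignment reads off a truth value. For each clause I would install a gadget that can be incorporated into a strongly connected subdigraph only when the decomposition routes through a rail of some literal satisfying that clause. All gadgets would be wired into a global frame --- essentially a pair of backbone structures meeting every gadget vertex --- that both enforces the \emph{spanning} requirement and supplies the only admissible way to close each $D_i$ into a strongly connected whole. I would then prove both directions: a satisfying assignment yields the two subdigraphs by following the chosen rails and closing through the frame, and conversely any valid pair of arc-disjoint strong spanning subdigraphs induces consistent rail choices, hence a satisfying assignment. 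The construction is plainly polynomial in the size of the formula.

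The main obstacle is that strong connectivity is a \emph{global} property while the SAT constraints are local: the gadgets must guarantee that local rail choices propagate so as to make both $D_1$ and $D_2$ strongly connected \emph{simultaneously}, with no unintended shortcut that renders a subdigraph strong without satisfying every clause, and with arc-disjointness and spanning preserved throughout. Calibrating the gadget degrees tightly enough to rule out such cheating while still admitting a feasible solution whenever the formula is satisfiable is where the real design effort lies, and verifying the reverse direction --- that a valid decomposition forces a globally consistent assignment and that every clause gadget is genuinely satisfied --- is the step I expect to be the most delicate and error-prone.
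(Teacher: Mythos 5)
The first thing to note is that the paper contains no proof of Theorem~\ref{Yeo} to compare against: the result is quoted as a theorem of Yeo, presented in the book of Bang-Jensen and Gutin~\cite{book:decomposing_into_two_strong_spanning_subdigraphs}, where NP-hardness is established by an explicit gadget reduction from a satisfiability problem. Your outline does point in the same general direction as that cited proof --- NP membership via certificate checking (correct and routine), plus a reduction from \textsc{3-Sat} built from variable gadgets, clause gadgets, and a global frame. Your preliminary degree observation is also correct and genuinely useful as intuition: in a digraph whose every vertex has in-degree and out-degree exactly $2$, two arc-disjoint strong spanning subdigraphs are forced to be two arc-disjoint Hamiltonian dicycles partitioning the arc set, the directed analogue of the Kotzig--Martin problem (Theorem~\ref{thm:Kotzig}).

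The genuine gap is that everything carrying the mathematical weight is described by its desired properties rather than constructed. The variable gadget whose two ``rails'' are forced into different subdigraphs, the clause gadget that can be made strong only through a satisfied literal, and the frame that enforces spanning-ness and closure without admitting shortcuts are all postulated; you acknowledge yourself that calibrating them ``is where the real design effort lies.'' But asserting the existence of gadgets with exactly these properties is precisely the content of the theorem: the hard part of such reductions is the reverse direction, showing that \emph{every} pair of arc-disjoint strong spanning subdigraphs induces a consistent, clause-satisfying assignment, and that no unintended routing makes both subdigraphs strong while ``cheating'' a clause. Until the gadgets are exhibited and both implications verified, no hardness has been proved. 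Note also that your degree observation does not rescue the proposal by reduction from Theorem~\ref{thm:Kotzig}: the natural translations of a $4$-regular undirected instance (orienting or bidirecting its edges) do not produce the tight $2$-in/$2$-out situation the observation needs --- bidirecting, for instance, yields in- and out-degree $4$ and makes the instance easy --- so the directed gadget construction cannot be obtained from the undirected result and must be carried out from scratch, as it is in the proof cited by the paper.
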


The following is a straightforward consequence of Theorem~\ref{Yeo}.

\begin{corollary} \label{cor:Yeo}
 Let $D$ be a digraph and let $c, k, \ell \in \mathbb{Z}_+$. Deciding whether $D$ has an arc-$\ell$-color-avoiding strongly $k$-arc- or $k$-vertex-connected coloring with at most $c$ colors is NP-complete. The problem remains NP-complete even if $k = \ell = 1$ and $c = 2$.
\end{corollary}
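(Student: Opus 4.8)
The plan is to reduce from the NP-complete problem of Theorem~\ref{Yeo}, namely deciding whether a digraph has two arc-disjoint strongly connected spanning subdigraphs. This is the natural analogue of the reduction used in Corollary~\ref{cor:Kotzig}, where a $2$-coloring achieving edge-$1$-color-avoiding $2$-connectivity corresponds exactly to a partition of the edges into two Hamiltonian cycles. Here, for $k=\ell=1$ and $c=2$, I would argue that a digraph $D$ has an arc-$1$-color-avoiding strongly $1$-connected coloring with at most $2$ colors if and only if $D$ has two arc-disjoint strongly connected spanning subdigraphs. Membership in NP is clear, since a claimed coloring can be verified by checking, for each of the (at most $c$) colors, that deleting all arcs of that color leaves a strongly $k$-arc- (or $k$-vertex-) connected digraph, which is a polynomial-time check.

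The key equivalence to establish is the following. First I would take a digraph $D$ that decomposes into two arc-disjoint strongly connected spanning subdigraphs $D_1$ and $D_2$; note these need not partition $E(D)$, but we may color the arcs of $D_1$ with color $1$, the arcs of $D_2$ with color $2$, and distribute any remaining arcs arbitrarily. Then after removing either color, all arcs of the other color survive, so the surviving digraph contains $D_2$ or $D_1$ respectively as a spanning subdigraph, hence is strongly connected; thus the coloring is arc-$1$-color-avoiding strongly $1$-connected. (One must be slightly careful that $D_1$ and $D_2$ are arc-disjoint so that deleting color~$1$ does not destroy color-$2$ arcs, which holds by hypothesis.) Conversely, given any arc-$1$-color-avoiding strongly $1$-connected coloring of $D$ with two colors, let $A_1$ and $A_2$ be the arc sets of the two color classes. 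Deleting color~$1$ leaves a strongly connected spanning subdigraph on the arc set $A_2$, and deleting color~$2$ leaves one on $A_1$; these two spanning subdigraphs are arc-disjoint since $A_1$ and $A_2$ are disjoint color classes. This gives the two required arc-disjoint strongly connected spanning subdigraphs, completing the equivalence.

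For the general statement with arbitrary $c,k,\ell \in \mathbb{Z}_+$, the same construction shows NP-hardness already at $k=\ell=1$, $c=2$; the ``even if'' clause is the substantive part, and the reduction above delivers exactly that special case. The strongly $k$-arc-connected and strongly $k$-vertex-connected versions coincide at $k=1$ (both reduce to strong connectivity after color removal, cf.\ the footnote identifying strongly $1$-arc-connected with strongly $1$-vertex-connected), so the single reduction handles both variants simultaneously.

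The main obstacle I anticipate is verifying the converse direction cleanly: one must ensure that strong connectivity of the surviving digraph after deleting a color really yields a \emph{spanning} strongly connected subdigraph on the complementary color class, and in particular that every vertex is incident to arcs of both colors or at least lies in both surviving digraphs. Since strong connectivity of $D-E'$ (where $E'$ is all arcs of one color) is required on the full vertex set $V(D)$ by the definition of arc-$\ell$-color-avoiding strongly connectivity, the surviving subdigraph is automatically spanning, so this concern dissolves once the definitions are invoked carefully. The only genuine care needed is the arc-disjointness of the two extracted subdigraphs, which is immediate from the disjointness of the color classes $A_1$ and $A_2$.
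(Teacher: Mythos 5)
Your proposal is correct and follows exactly the paper's argument: the paper also establishes NP-hardness by observing that for $k=\ell=1$ and $c=2$ the problem is equivalent to deciding whether the digraph has two arc-disjoint strongly connected spanning subdigraphs (Theorem~\ref{Yeo}), with membership in NP being routine. Your write-up merely spells out the two directions of that equivalence in more detail than the paper's one-line justification.
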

\begin{proof}
 This problem is obviously in NP, and by Theorem~\ref{Yeo} it is also NP-hard: for $k = \ell=1$ and $c=2$, our problem becomes equivalent to deciding whether $G$ has two arc-disjoint strongly connected spanning subdigraphs.
\end{proof}

\subsubsection{On the minimum number of colors in arc-color-avoiding rooted connected colorings}

Still focusing on digraphs, we show that the complexity of finding color-avoiding rooted connected colorings with a minimum number of colors can differ from that of finding color-avoiding strongly connected colorings. In particular, while the latter is always NP-hard, the former may be solvable in certain cases. To establish this, we first make use of the following theorem of Edmonds.

\begin{theorem}[Edmonds~\cite{article:rootededmonds}] \label{thm:rootededmonds}
 Let $D$ be a digraph with a given vertex $r \in V(D)$, and let $k \in \mathbb{Z}_+$. Then $D$ is $r$-rooted $k$-arc-connected if and only if it contains $k$ pairwise arc-disjoint spanning $r$-arborescences.
\end{theorem}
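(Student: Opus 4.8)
The plan is to treat this as Edmonds' disjoint branchings theorem and prove the two implications separately, with essentially all the work in the ``only if'' direction. The ``if'' direction is immediate: if $D$ contains $k$ pairwise arc-disjoint spanning $r$-arborescences $A_1, \dots, A_k$, then for each vertex $v$ and each $i$ the arborescence $A_i$ contains a unique $r$-$v$ dipath $P_i$, and since the $A_i$ are arc-disjoint, so are $P_1, \dots, P_k$; hence there are $k$ pairwise arc-disjoint $r$-$v$ dipaths for every $v$, which is exactly $r$-rooted $k$-arc-connectivity. For the converse I would first reformulate the hypothesis in cut terms. Writing $\varrho_D(X)$ for the number of arcs of $D$ with head in $X$ and tail in $V(D) - X$, the arc-disjoint directed form of Menger's theorem gives that $D$ is $r$-rooted $k$-arc-connected if and only if $\varrho_D(X) \ge k$ for every nonempty $X \subseteq V(D) - \{r\}$, since a minimum set of arcs separating some $v$ from $r$ is precisely the set of arcs entering a minimizing set $X$ with $v \in X$ and $r \notin X$. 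It therefore suffices to show that this in-degree condition forces $k$ arc-disjoint spanning $r$-arborescences.

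I would prove this by induction on $k$, the case $k = 0$ being vacuous. The inductive step is entirely contained in the following key lemma: if $\varrho_D(X) \ge k \ge 1$ for every nonempty $X \subseteq V(D) - \{r\}$, then $D$ has a single spanning $r$-arborescence $A$ for which $\varrho_{D - A}(X) \ge k - 1$ for every such $X$. Granting the lemma, $D - A$ again satisfies the in-degree hypothesis, now with bound $k - 1$, so by induction it contains $k - 1$ pairwise arc-disjoint spanning $r$-arborescences; adjoining $A$ yields the required $k$.

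To prove the key lemma I would build $A$ greedily, one vertex at a time. I maintain a set $U$ with $r \in U$ and a spanning $r$-arborescence $F$ of $U$, subject to the invariant that $\varrho_{D - F}(X) \ge k - 1$ for every nonempty $X \subseteq V(D) - \{r\}$. Initially $U = \{r\}$ and $F = \emptyset$, where the invariant holds because $\varrho_D(X) \ge k$. At a step with $U \ne V(D)$ I add one arc $(u,v)$ with $u \in U$ and $v \in V(D) - U$, extending $F$ to span $U \cup \{v\}$. Deleting this arc from $D - F$ lowers $\varrho_{D-F}(X)$ by one exactly for the sets $X$ with $v \in X$ and $u \notin X$, so the invariant can break only if such a set is \emph{tight}, meaning it currently attains the minimum value $\varrho_{D-F}(X) = k - 1$. (At least one extending arc exists, since $V(D) - U$ is a nonempty subset of $V(D) - \{r\}$ and $F$ has no arcs into it, so $\varrho_{D-F}(V(D)-U) = \varrho_D(V(D)-U) \ge k \ge 1$.) The crux is thus to show that whenever $U \ne V(D)$ there is an arc from $U$ to $V(D) - U$ entering no tight set; once this is established the process terminates with $U = V(D)$ and $A = F$ satisfying the invariant.

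This existence step is the main obstacle, and it is where submodularity enters. The set function $\varrho_{D - F}$ is submodular, so by the inequality $\varrho_{D-F}(X) + \varrho_{D-F}(Y) \ge \varrho_{D-F}(X \cap Y) + \varrho_{D-F}(X \cup Y)$ the family of tight sets is closed under intersection and union of intersecting pairs. I would argue by contradiction: assuming that every arc from $U$ into $V(D) - U$ is forced to enter some tight set, I select such a witnessing tight set for each candidate arc and uncross the chosen sets using submodularity to produce a tight set whose existence contradicts $\varrho_{D-F}(V(D) - U) \ge k$. I expect this uncrossing — organizing the possibly many overlapping tight sets that meet $V(D) - U$ and confirming that a safe extending arc always survives — to be the delicate part of the proof, whereas the reduction via Menger and the inductive peeling of one arborescence are routine.
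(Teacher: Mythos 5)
The paper does not prove this statement at all: it is quoted as Edmonds' classical disjoint branchings theorem with a citation to Edmonds (1973), so there is no internal proof to compare against. Your skeleton is the standard one (essentially Lov\'asz's proof): the easy direction via the unique $r$--$v$ paths in each arborescence, the Menger reformulation $\varrho_D(X)\ge k$ for all nonempty $X\subseteq V(D)-\{r\}$, induction on $k$ via peeling one arborescence $A$ with $\varrho_{D-A}(X)\ge k-1$, and the greedy growth of $A$ one arc at a time subject to never entering a tight set. All of those reductions are correct, including your observation that adding $(u,v)$ only decrements $\varrho_{D-F}(X)$ for sets with $v\in X$, $u\notin X$, and that an extending arc always exists because $\varrho_{D-F}(V(D)-U)=\varrho_D(V(D)-U)\ge k$.

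The genuine gap is the one you flag yourself: the existence of a \emph{safe} extending arc, which is where the entire content of the theorem lives, is left unproved, and the route you sketch for it points in the wrong direction. Uncrossing the witnessing tight sets by taking \emph{unions} in the hope of contradicting $\varrho_{D-F}(V(D)-U)\ge k$ does not close: a maximal tight set $X$ meeting $V(D)-U$ still admits arcs from $X\cap U$ into $X-U$ that may enter tight sets $Y\subsetneq X$, and maximality gives no contradiction there; nor can one in general manufacture a tight set disjoint from $U$. The argument that works goes the other way, via \emph{intersections} and \emph{minimality}. If some tight set meets $V(D)-U$ (otherwise any extending arc is safe), choose $X$ inclusion-wise minimal among tight sets with $X-U\ne\emptyset$. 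Since $F$ has no arcs with head outside $U$, $\varrho_{D-F}(X-U)=\varrho_D(X-U)\ge k>k-1=\varrho_{D-F}(X)$, and every arc entering $X-U$ whose tail lies outside $X$ also enters $X$; hence some arc $(u,v)$ of $D-F$ has $u\in X\cap U$ and $v\in X-U$. If $(u,v)$ entered a tight set $Y$ (so $v\in Y$, $u\notin Y$), then $X\cap Y$ would be tight by submodularity together with the invariant lower bound, nonempty, would meet $V(D)-U$ (it contains $v$), and would be properly contained in $X$ (it misses $u$), contradicting the minimality of $X$. So $(u,v)$ is safe. With this substituted for your uncrossing step, the proof is complete; without it, the hardest part of the theorem is missing.
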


We are now ready to determine the minimum number of colors in an arc-$\ell$-color-avoiding $r$-rooted 1-connected coloring—provided such a coloring exists; that is, by Proposition~\hyperlink{linkdest:item3:edge-prop-1}{\ref*{edge-prop-1}.\ref*{item3:edge-prop-1}}, if and only if the digraph is $r$-rooted and $(\ell + 1)$-arc-connected.

\begin{theorem} \label{thm:rooted_mincolors}
 The minimum number of colors in an arc-$\ell$-color-avoiding $r$-rooted 1-connected coloring of an $r$-rooted $(\ell + 1)$-arc-connected digraph is $\ell + 1$.
\end{theorem}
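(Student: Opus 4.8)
The plan is to prove both a lower bound of $\ell+1$ and a matching upper bound. The lower bound is immediate: if we had a coloring with only $\ell$ colors, then removing all $\ell$ colors would delete every arc, leaving a digraph that is certainly not rooted connected (on at least two vertices). More carefully, since the digraph is $r$-rooted $(\ell+1)$-arc-connected but we must survive the removal of the arcs of any $\ell$ colors while retaining $r$-rooted connectivity, a coloring with at most $\ell$ colors would allow us to remove every arc, contradicting the requirement. So any valid coloring uses at least $\ell+1$ colors.

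For the upper bound, the idea is to exhibit an arc-$\ell$-color-avoiding $r$-rooted $1$-connected coloring using exactly $\ell+1$ colors. Since $D$ is $r$-rooted $(\ell+1)$-arc-connected, Theorem~\ref{thm:rootededmonds} (Edmonds) guarantees that $D$ contains $\ell+1$ pairwise arc-disjoint spanning $r$-arborescences $T_1, \dots, T_{\ell+1}$. The natural coloring is to assign color $i$ to all arcs of $T_i$ (for $i = 1, \dots, \ell+1$) and to color the remaining arcs arbitrarily, say with color $1$. Now I would verify that after removing the arcs of any at most $\ell$ colors, the digraph remains $r$-rooted connected: among the $\ell+1$ arborescences, at least one $T_j$ has its color $j$ not among the removed colors, so $T_j$ survives intact. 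A spanning $r$-arborescence is by itself $r$-rooted connected (it contains a dipath from $r$ to every vertex), hence $D$ minus the removed arcs still contains a spanning $r$-arborescence and is therefore $r$-rooted connected. This establishes that the coloring is arc-$\ell$-color-avoiding $r$-rooted $1$-connected with $\ell+1$ colors.

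Combining the two bounds gives that the minimum is exactly $\ell+1$. The only subtlety—and the main thing to be careful about—is ensuring that the remaining, uncolored arcs do not spoil the argument when we fold them into one of the existing color classes; but since survival only requires that \emph{some} surviving color class still contains a spanning $r$-arborescence, adding extra arcs to a color class can never hurt rooted connectivity, so coloring the leftover arcs arbitrarily is harmless. I expect no real obstacle here: the heavy lifting is done entirely by Edmonds' theorem, which converts the arc-connectivity hypothesis into the disjoint arborescence packing that drives the construction.
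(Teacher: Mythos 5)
Your proposal is correct and follows essentially the same route as the paper: the lower bound is immediate from the definition, and the upper bound colors the $\ell+1$ pairwise arc-disjoint spanning $r$-arborescences guaranteed by Edmonds' theorem with distinct colors so that at least one survives any removal of $\ell$ colors. Your explicit handling of the leftover arcs is a minor detail the paper glosses over, but it changes nothing substantive.
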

\begin{proof}
 Let $D$ be an $r$-rooted $(\ell + 1)$-arc-connected digraph. By the definition of arc-$\ell$-color-avoiding 1-con\-nected colorings, we need at least $\ell + 1$ colors. Now we give an arc-$\ell$-color-avoiding 1-connected coloring of $D$ with $\ell + 1$ colors. By Theorem~\ref{thm:rootededmonds}, the digraph $D$ has $\ell + 1$ arc-disjoint spanning $r$-arborescences; let us color all the arcs of the $i$-th $r$-arborescences with the $i$-th color for any $i \in [\ell + 1]$. Then after the removal of the arcs of any $\ell$ colors, at least one $r$-arborescences remains intact, thus the remaining digraph is still $r$-rooted 1-connected. Hence this coloring is indeed an arc-$\ell$-color-avoiding 1-connected coloring of $D$.
\end{proof}

Since we can find $k$ arc-disjoint spanning $r$-arborescences in an $r$-rooted $k$-arc connected digraph in polynomial time, see for example~\cites{article:tarjan,article:bhalgat}, the proof of Theorem~\ref{thm:rooted_mincolors} also implies the following result.

\begin{theorem} \label{thm:rooted_arc_mincoloring}
 An arc-$\ell$-color-avoiding $r$-rooted 1-connected coloring with $\ell + 1$ colors for an $r$-rooted $(\ell + 1)$-arc-connected digraph $D$ can be found in polynomial time.
\end{theorem}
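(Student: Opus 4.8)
The plan is to observe that the coloring produced in the proof of Theorem~\ref{thm:rooted_mincolors} is completely determined by a packing of $\ell+1$ arc-disjoint spanning $r$-arborescences, so the entire task reduces to computing such a packing efficiently. First I would recall that $D$ is $r$-rooted $(\ell+1)$-arc-connected by hypothesis, so by Theorem~\ref{thm:rootededmonds} it admits $\ell+1$ pairwise arc-disjoint spanning $r$-arborescences; this is exactly the object on which the construction in Theorem~\ref{thm:rooted_mincolors} is based.

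The key step is to replace the mere existence guaranteed by Theorem~\ref{thm:rootededmonds} with an explicit construction. For this I would invoke the polynomial-time algorithms for arborescence packing: in an $r$-rooted $k$-arc-connected digraph one can actually find $k$ pairwise arc-disjoint spanning $r$-arborescences in polynomial time, see~\cites{article:tarjan,article:bhalgat}. Applying this with $k=\ell+1$ yields arborescences $T_1,\dots,T_{\ell+1}$ in polynomial time.

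It then remains to assign the colors: give every arc of $T_i$ the $i$-th color for $i\in[\ell+1]$, and color any arc lying in none of the $T_i$ arbitrarily. This is precisely the coloring analyzed in the proof of Theorem~\ref{thm:rooted_mincolors}, so it is arc-$\ell$-color-avoiding $r$-rooted 1-connected and uses $\ell+1$ colors; moreover the color assignment takes linear time in the number of arcs. Since both the packing and the assignment run in polynomial time, the coloring is found in polynomial time, as claimed.

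I do not expect a genuine obstacle here: the correctness of the output coloring was already settled in Theorem~\ref{thm:rooted_mincolors}, and the only nontrivial ingredient is the existence of a polynomial-time arborescence-packing routine, which is supplied by the cited results. The mild care required is to confirm that these routines apply in the present (possibly parallel-arc) setting and return arborescences spanning all of $V(D)$, but this is exactly what the hypothesis of $r$-rooted $(\ell+1)$-arc-connectivity ensures.
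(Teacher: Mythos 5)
Your proposal is correct and matches the paper's argument: the paper likewise obtains the coloring by computing $\ell+1$ pairwise arc-disjoint spanning $r$-arborescences in polynomial time via the cited algorithms and then reusing the coloring from Theorem~\ref{thm:rooted_mincolors}. No issues.
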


Determining the minimum number of colors in an arc-$\ell$-color-avoiding $k$-edge- or $k$-vertex-connected $r$-rooted coloring of an $r$-rooted $(k + \ell)$-arc-connected digraph for $k \ge 2$ remains open.\footnote{We note that Frank~\cite{article:franksejtes} conjectured that a digraph $D$ is $r$-rooted $k$-vertex-connected if and only if it contains $k$ independent spanning $r$-arborescences; where two spanning $r$-arborescences are called independent if for any vertex $v \in V(D)$, the paths from $r$ to $v$ in the two arborescences are internally vertex-disjoint. The case $k = 2$ was verified by Whitty~\cite{article:whitty}, but for $k \ge 3$ the conjecture was disproved by Huck~\cite{article:huck}.}

\subsubsection{A weighted version of arc-color-avoiding rooted connected colorings}

Finally, let us consider a weighted version of the above problem.

\begin{theorem}
 Let $D$ be a graph with a given vertex $r \in V(D)$, let $w: E(G) \to \mathbb{R}$ be a weight function, let $\omega \in \mathbb{R}$ and $c, \ell \in \mathbb{Z}_+$. Deciding whether $D$ has an arc-$\ell$-color-avoiding $r$-rooted 1-connected coloring with at most $c$ colors such that after the removal of any at most $\ell$ colors, there remains a spanning $r$-arborescence of weight at most $\omega$ is NP-complete. The problem remains NP-complete even if $\ell = 1$ and $c = 2$.
\end{theorem}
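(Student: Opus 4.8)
The plan is to mirror the proof of Corollary~\ref{cor:weighted}, replacing spanning trees by spanning $r$-arborescences and undirected edge-disjointness by arc-disjointness. Membership in NP follows as in Corollary~\ref{cor:weighted}; in the hardness regime $\ell = 1$, $c = 2$ one only ever deletes a single color, so whether a cheap spanning $r$-arborescence survives can be checked in polynomial time by computing a minimum-weight spanning $r$-arborescence (via the Chu--Liu/Edmonds algorithm) inside each of the two color classes. For the hardness I would reduce from the NP-complete problem of Theorem~\ref{Bang-Jensen}, exploiting the special structure promised by that theorem.

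First I would record the directed analogue of the equivalence used for Corollary~\ref{cor:weighted}. Fix $\ell = 1$ and $c = 2$. The desired coloring condition says that after deleting the arcs of either single color a spanning $r$-arborescence of weight at most $\omega$ remains; since with two colors deleting one color leaves exactly the arcs of the other color, this is the statement that each of the two color classes contains a spanning $r$-arborescence of weight at most $\omega$. As the two classes are arc-disjoint, this is in turn equivalent to $D$ possessing two arc-disjoint spanning $r$-arborescences, both of weight at most $\omega$. (For the forward direction one colors one such arborescence with the first color, the other with the second, and the remaining arcs arbitrarily.) Thus, for $\ell = 1$ and $c = 2$, the decision problem is equivalent to deciding whether $D$ has two arc-disjoint spanning $r$-arborescences, each of weight at most $\omega$.

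It therefore suffices to prove that this two-arborescence problem is NP-complete, which I would do by reducing directly from Theorem~\ref{Bang-Jensen}. Let $G$ be one of its instances, so $G$ arises from a path $u_0 u_1 \cdots u_m$ by doubling each edge, with weight function $w$ and threshold $\omega$. Build a digraph $D$ on the same vertex set by setting $r \colonequals u_0$ and orienting both parallel copies of the edge $u_{i-1} u_i$ as arcs from $u_{i-1}$ to $u_i$, preserving their weights. The key observation is that in $D$ every vertex $u_i$ with $i \ge 1$ receives in-arcs only from the two parallel copies leaving $u_{i-1}$; hence a spanning $r$-arborescence of $D$ must select exactly one in-arc at each $u_i$, and this selection is a weight- and arc-disjointness-preserving bijection between the spanning $r$-arborescences of $D$ and the spanning trees of $G$. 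Consequently $D$ has two arc-disjoint spanning $r$-arborescences both of weight at most $\omega$ if and only if $G$ has two edge-disjoint spanning trees both of weight at most $\omega$, completing the reduction. (Note that $D$ is $r$-rooted $2$-arc-connected, so by Proposition~\hyperlink{linkdest:item3:edge-prop-1}{\ref*{edge-prop-1}.\ref*{item3:edge-prop-1}} the relevant colorings are not vacuously absent.)

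The main obstacle is precisely this correspondence between spanning trees and spanning $r$-arborescences. Under a general orientation a spanning tree of the underlying graph need not orient into an $r$-arborescence, so the bijection would break down; it is the path-with-doubled-edges structure guaranteed by Theorem~\ref{Bang-Jensen}---which gives each non-root vertex in-degree exactly two, both in-arcs coming from its unique predecessor---that forces every spanning $r$-arborescence to arise from an independent binary choice at each position and thereby makes the reduction faithful. Everything else is a routine transcription of the argument for Corollary~\ref{cor:weighted}.
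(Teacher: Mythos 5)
Your proposal is correct and follows essentially the same route as the paper: reduce from the Bang-Jensen et al.\ result on the path-with-doubled-edges instances, orient all edges away from the endpoint $u$ taken as the root $r$, and use the equivalence (for $\ell=1$, $c=2$) between the colored condition and the existence of two arc-disjoint spanning $r$-arborescences of weight at most $\omega$, which in turn correspond to edge-disjoint spanning trees of $G$. You spell out the arborescence--spanning-tree correspondence in more detail than the paper does, but the argument is the same.
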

\begin{proof}
 This problem is obviously in NP and now we show that it is NP-hard even for $\ell = 1$ and $c=2$ by reducing the NP-complete problem described in Theorem~\ref{Bang-Jensen} to it. Let $G$ be a graph that is obtained from a $u$-$v$ path by adding a parallel edge to each of its edge, let $w \colon E(G) \to \mathbb{R}$ be a weight function, $\omega$ be an arbitrary number, and let $c=2$. Let $D$ be the digraph that can be obtained from $G$ by orienting each edge of $G$ away from $u$ and toward $v$, and let $r = u$ in $D$. Then $D$ has an arc-1-color-avoiding $r$-rooted 1-connected coloring with two colors such that after the removal of any color, there remains a spanning $r$-arborescence of weight at most $\omega$ if and only if $D$ has two arc-disjoint spanning trees both of weight at most $\omega$, which is equivalent to $G$ having two edge-disjoint spanning trees both of weight at most $\omega$. 
\end{proof}

\subsection{Vertex-color-avoiding connected colorings}

The following proposition gives a necessary and sufficient condition for the existence of vertex-color-avoiding connected colorings of a graph, and for the existence of vertex-color-avoiding strongly or rooted connected colorings of a digraph.

\begin{proposition} \label{prop:vertexcoloring}
 Let $k, \ell \in \mathbb{Z}_+$.
 \begin{enumerate}[label=(\arabic*), topsep=2pt, itemsep=0pt]
  \item \label{item1:vertexcoloring} A graph $G$ has a vertex-$\ell$-color-avoiding $k$-edge- or $k$-vertex-con\-nec\-ted coloring if and only if $G$ is $k$-edge- or $k$-vertex-connected, respectively.
  \item \label{item2:vertexcoloring} A digraph $D$ has a vertex-$\ell$-color-avoiding strongly $k$-arc- or $k$-vertex-connected coloring if and only if $D$ is strongly $k$-arc- or $k$-vertex-connected, respectively.
  \item \label{item3:vertexcoloring} A digraph $D$ with a given vertex $r \in V(D)$ has a vertex-$\ell$-color-avoiding $r$-rooted $k$-arc- or $k$-vertex-connected coloring if and only if $D$ is $r$-rooted $k$-arc- or $k$-vertex-connected, respectively.
\end{enumerate}
\end{proposition}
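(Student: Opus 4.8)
The plan is to establish all three statements from a single common template, proving each biconditional by a short ``only if'' argument and an even shorter ``if'' argument, both of which transfer unchanged from the undirected setting to the strongly- and rooted-connected ones. For the \emph{``only if''} direction of~\ref{item1:vertexcoloring}, I would exploit that $\ell \ge 1$, so the empty set of colors qualifies as a set of at most $\ell$ colors. Deleting the vertices of no colors leaves $G$ unchanged, and the defining property of a vertex-$\ell$-color-avoiding $k$-edge-connected (or $k$-vertex-connected) coloring then forces $G$ itself to be $k$-edge-connected (or $k$-vertex-connected), as soon as $G$ has at least two vertices. The identical observation applied to $D$ yields the ``only if'' directions of~\ref{item2:vertexcoloring} and~\ref{item3:vertexcoloring}, with ``deleting no vertices'' trivially preserving strong and $r$-rooted connectivity.

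For the \emph{``if''} direction I would exhibit an explicit coloring, namely the monochromatic one assigning a single color to every vertex. This is exactly the coloring of Remark~\ref{remark:spec_colorings}(2): with only one color available, any deletion of at most $\ell$ colors removes either no vertex or every vertex. In the first case the remaining graph is $G$ (or $D$), which is $k$-connected by hypothesis; in the second it is the empty graph, which has at most one vertex and so satisfies the defining condition vacuously. Hence the monochromatic coloring is of the required type, establishing existence. It is worth emphasizing the contrast with the \emph{edge}-colored case of Proposition~\ref{edge-prop-1}, where the threshold rises to $(k+\ell)$-connectivity: there a single color class can contain many edges, so deleting it can break connectivity; here a single color class of vertices can simply be all of $V$, and the ``at most one vertex'' clause in the definition absorbs its deletion, so no connectivity beyond the plain $k$-connectivity of $G$ is ever needed.

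I do not expect a genuine obstacle here; the only care required is the bookkeeping of the degenerate graphs with at most one vertex, together with the empty graph arising after a total deletion, all of which are governed by the one-vertex and loop conventions fixed in Section~\ref{section:preliminaries}. Once these trivial cases are dispatched and one notes that the empty-color-set deletion and the monochromatic coloring behave identically for strong and $r$-rooted connectivity, the arguments for~\ref{item2:vertexcoloring} and~\ref{item3:vertexcoloring} go through verbatim, differing from~\ref{item1:vertexcoloring} only in the substitution of ``strongly $k$-arc-connected'', ``strongly $k$-vertex-connected'', and their $r$-rooted counterparts for the plain connectivity notions.
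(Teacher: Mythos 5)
Your proposal is correct and follows essentially the same route as the paper: necessity is read off directly from the definition (the deletion of no colors leaves $G$ or $D$ intact), and sufficiency is witnessed by the monochromatic coloring, exactly as in Remark~\ref{remark:spec_colorings}. The paper's proof is just a terser version of the same argument, likewise handling parts~\ref{item2:vertexcoloring} and~\ref{item3:vertexcoloring} by the remark that they are analogous to~\ref{item1:vertexcoloring}.
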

\begin{proof}
 We only prove \ref{item1:vertexcoloring} since \ref{item2:vertexcoloring} and \ref{item3:vertexcoloring} can be proved analogously.

 \smallskip
 
 By the definition of vertex-$\ell$-color-avoiding $k$-edge-connectivity, a graph $G$ must be \mbox{$k$-edge}-connected to have a vertex-$\ell$-color-avoiding $k$-edge-connected coloring, and if $G$ is $k$-edge-connected, then assigning the same color to each vertex results in a vertex-$\ell$-color-avoiding $k$-edge-connected graph.
\end{proof}

Using the above proof, we can also observe the following about the minimum number of colors in vertex-color-avoiding connected colorings of undirected graphs and in vertex-color-avoiding strongly or rooted connected colorings of directed graphs.

\begin{corollary} \label{cor:vertexmincolor}
  Let $k, \ell \in \mathbb{Z}_+$.
 \begin{enumerate}[label=(\arabic*), topsep=2pt, itemsep=0pt]
  \item The minimum number of colors in a vertex-$\ell$-color-avoiding $k$-edge- or $k$-vertex-connected coloring of a $k$-edge- or $k$-vertex-connected graph is 1. 
 
  \item The minimum number of colors in a vertex-$\ell$-color-avoiding strongly $k$-arc- or $k$-vertex-connected coloring of a strongly $k$-arc- or $k$-vertex-connected digraph is 1.

  \item The minimum number of colors in a vertex-$\ell$-color-avoiding $r$-rooted $k$-arc- or $k$-vertex-connected coloring of an $r$-rooted $k$-arc- or $k$-vertex-connected digraph is 1.
 \end{enumerate}
\end{corollary}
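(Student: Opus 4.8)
The plan is to read all three statements off the construction already used in the proof of Proposition~\ref{prop:vertexcoloring}, since that proof colors every vertex with a single common color. The argument splits into a lower bound and a matching upper bound, and the same reasoning handles the undirected, strong, and rooted cases simultaneously.

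For the lower bound I would first observe that each of the connectivity hypotheses forces the underlying (di)graph to contain at least one vertex (a $k$-edge-, $k$-vertex-, strongly, or rooted connected (di)graph is never empty). Consequently any admissible vertex-coloring must use at least one color, so the minimum number of colors is at least $1$ in all three parts. For the upper bound I would exhibit the monochromatic coloring: assign a single common color to every vertex. The only subsets of colors of size at most $\ell$ are then the empty set and the singleton consisting of that one color. Removing the empty set leaves the (di)graph unchanged, which is appropriately connected by hypothesis; removing the single color—permissible because $\ell \ge 1$—deletes every vertex and leaves the empty graph. By the definition of vertex-$\ell$-color-avoiding connectivity, a graph with at most one vertex remaining always satisfies the requirement, so the condition holds in this case as well. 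Hence the monochromatic coloring is a valid vertex-$\ell$-color-avoiding $k$-edge- (or $k$-vertex-) connected coloring using exactly one color.

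Combining the two bounds gives that the minimum number of colors is exactly $1$ for part~\ref{item1:vertexcoloring}; parts~\ref{item2:vertexcoloring} and~\ref{item3:vertexcoloring} follow by the same argument, simply replacing ordinary connectivity with its strong or $r$-rooted counterpart and invoking the corresponding clauses of Proposition~\ref{prop:vertexcoloring}. The only point requiring genuine care—and thus the main (if mild) obstacle—is the verification that the monochromatic coloring remains admissible when deleting the unique color wipes out the entire vertex set; this is precisely where one must explicitly invoke the ``at most one vertex'' escape clause built into the definition of vertex-color-avoiding connectivity, rather than attempting to argue that some positive connectivity survives.
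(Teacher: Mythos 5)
Your proposal is correct and matches the paper's approach: the paper derives this corollary directly from the proof of Proposition~\ref{prop:vertexcoloring}, which exhibits exactly the monochromatic coloring you describe, with the ``at most one vertex'' clause of the definition absorbing the case where the single color is removed. You have merely made explicit the trivial lower bound and the verification the paper leaves implicit.
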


\subsection{Internally color-avoiding connected colorings}

In the following, we give a necessary and sufficient condition for the existence of internally vertex-color-avoiding connected colorings of a graph and for the existence of internally vertex-color-avoiding strongly or rooted connected colorings of a digraph. First, let us focus on the cases with $k$-edge-connectivity.

\begin{proposition} \label{prop:ivertexcoloring-edge}
 Let $k, \ell \in \mathbb{Z}_+$.
 \begin{enumerate}[label=(\arabic*), topsep=2pt, itemsep=0pt]
  \item \label{item1:ivertexcoloring-edge} A graph $G=(V,E)$ has an internally vertex-$\ell$-color-avoiding $k$-edge-connected coloring if and only if $G - V'$ is $k$-edge-connected for any $V' \subseteq V$ with $|V'| \le \ell$.
 
  \item \label{item2:ivertexcoloring-edge} A digraph $D=(V,E)$ has an internally vertex-$\ell$-color-avoiding strongly $k$-arc-connected coloring if and only if $D-V'$ is strongly $k$-arc-connected for any $V' \subseteq V$ with $|V'| \le \ell$.

  \item \label{item3:ivertexcoloring-edge} A digraph $D=(V,E)$ with a given vertex $r \in V$ has an internally vertex-$\ell$-color-avoiding $r$-rooted $k$-arc-connected coloring if and only if $D-V'$ is $r$-rooted $k$-arc-connected for any $V' \subseteq V - \{ r \}$ with $|V'| \le \ell$.
 \end{enumerate}
\end{proposition}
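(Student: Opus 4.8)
The plan is to prove statement~\ref{item1:ivertexcoloring-edge} in full and then observe that statements~\ref{item2:ivertexcoloring-edge} and~\ref{item3:ivertexcoloring-edge} follow by an essentially identical argument with arc cuts in place of edge cuts, exactly as Propositions~\ref{edge-prop-1} and~\ref{edge-prop-2} were treated. I would work from the mixed-cut form of the definition: with respect to a vertex-coloring $f$, the graph $G$ is internally vertex-$\ell$-color-avoiding $k$-edge-connected precisely when it has no mixed cut $V' \cup E'$ with $|E'| \le k-1$ and $|f(V')| \le \ell$.

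For the ``only if'' direction I would argue by contraposition. Suppose some $V' \subseteq V$ with $|V'| \le \ell$ makes $G - V'$ fail to be $k$-edge-connected. By Menger's theorem~\cite{article:menger}, $G - V'$ then admits an edge cut $E''$ with $|E''| \le k-1$, and since $G - (V' \cup E'') = (G - V') - E''$ is disconnected, the set $V' \cup E''$ is a mixed cut of $G$. For \emph{any} coloring $f$ we have $|f(V')| \le |V'| \le \ell$ together with $|E''| \le k-1$, so this mixed cut certifies that $f$ is not an internally vertex-$\ell$-color-avoiding $k$-edge-connected coloring; hence no such coloring exists.

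For the ``if'' direction I would reuse the unique-color construction from Propositions~\ref{edge-prop-1} and~\ref{edge-prop-2}: assign a distinct color to every vertex, so that $|f(V')| = |V'|$ for all $V' \subseteq V$. Then any mixed cut $V' \cup E'$ with $|f(V')| \le \ell$ has $|V'| \le \ell$, and by hypothesis $G - V'$ is $k$-edge-connected, hence has no edge cut of size at most $k-1$; this forces $|E'| \ge k$. Consequently no mixed cut with $|E'| \le k-1$ and $|f(V')| \le \ell$ exists, so the unique-color coloring is internally vertex-$\ell$-color-avoiding $k$-edge-connected.

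I expect the genuinely delicate point to be the rooted case~\ref{item3:ivertexcoloring-edge}. There the relevant mixed cuts can never contain $r$: since $r$ is the common source of all the required dipaths, it is never an internal vertex, which is exactly why the hypothesis quantifies over $V' \subseteq V - \{r\}$ instead of $V' \subseteq V$. The hard part will be verifying that the unique-color construction survives this asymmetry, namely that deleting the vertices of at most $\ell$ colors never forces the deletion of $r$; this holds because $r$ occurs only as an endpoint, so even when its (unique) color is among those removed, no internally vertex-avoiding $r$-rooted dipath is affected, and the deleted internal vertices still form a set $V' \subseteq V - \{r\}$ of size at most $\ell$. A minor secondary point I would state explicitly is the Menger conversion from ``not $k$-connected'' to ``has a cut of the stated size,'' together with the degenerate subcases in which $G - V'$ (or its directed analogue) has at most one vertex, which are read off directly from the definitions.
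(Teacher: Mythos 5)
Your proof follows the paper's argument essentially verbatim: the same contrapositive step for the ``only if'' direction (a small $V'$ plus a Menger edge cut of $G-V'$ yields a forbidden mixed cut under any coloring) and the same unique-color-per-vertex construction for the ``if'' direction, with items (2) and (3) dispatched as analogous. Your added remarks on the rooted case and the degenerate one-vertex subcases are correct and, if anything, slightly more careful than the paper's own treatment, so there is nothing further to fix.
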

\begin{proof}
 We only prove \ref{item1:ivertexcoloring-edge} since \ref{item2:ivertexcoloring-edge} and \ref{item3:ivertexcoloring-edge} can be proved analogously.

 \medskip
 
 First, assume that there exists a set of vertices $V' \subseteq V$ with $|V'| \le \ell$ for which $G - V'$ is not $k$-edge-connected, i.e., there exists a set $E'$ of at most $k$ edges whose removal disconnects the graph. Suppose to the contrary that $G$ has an internally vertex-$\ell$-color-avoiding $k$-edge-connected coloring $f \colon V \to C$, where $C$ is a finite color set. Then $V' \cup E'$ is a mixed cut with $\big| f(V') \big| \le \ell$ and $|E'| \le k-1$, a contradiction.

 \smallskip
 
 Now assume that $G - V'$ is $k$-edge-connected for all $V' \subseteq V$ with $|V'| \le \ell$. Then assigning a unique color to each vertex results in an internally vertex-$\ell$-color-avoiding $k$-edge-connected graph.
\end{proof}

Now let us consider the corresponding cases of $k$-vertex-connectivity.

\begin{proposition} \label{prop:ivertexcoloring-vertex}
 Let $k, \ell \in \mathbb{Z}_+$.
 \begin{enumerate}[label=(\arabic*), topsep=2pt, itemsep=0pt]
  \item \label{item1:ivertexcoloring-vertex} A graph $G$ has an internally vertex-$\ell$-color-avoiding $k$-vertex-connected coloring if and only if $G$ is $(k+\ell)$-vertex-connected.
 
  \item \label{item2:ivertexcoloring-vertex} A digraph $D$ has an internally vertex-$\ell$-color-avoiding strongly $k$-vertex-connected coloring if and only if $D$ is strongly $(k+\ell)$-vertex-connected.

  \item \label{item3:ivertexcoloring-vertex} A digraph $D$ with a given vertex $r \in V$ has an internally vertex-$\ell$-color-avoiding $r$-rooted $k$-vertex-connected coloring if and only if $D$ is $r$-rooted $(k+\ell)$-vertex-connected.
 \end{enumerate}
\end{proposition}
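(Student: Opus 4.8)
The plan is to prove both implications of statement~\ref{item1:ivertexcoloring-vertex}, mirroring the proof of Proposition~\ref{edge-prop-1}, and then to observe that \ref{item2:ivertexcoloring-vertex} and \ref{item3:ivertexcoloring-vertex} follow by the directed analogues of the same argument. Throughout I would use the Menger-type characterization recalled in Section~\ref{section:preliminaries}: a graph is $(k+\ell)$-vertex-connected if and only if it has no mixed cut of size at most $k+\ell-1$, together with the mixed-cut formulation of internally vertex-$\ell$-color-avoiding $k$-vertex-connectivity taken directly from its definition.

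For the ``if'' direction, suppose $G=(V,E)$ is $(k+\ell)$-vertex-connected and assign a \emph{unique} color to every vertex. If this coloring were not internally vertex-$\ell$-color-avoiding $k$-vertex-connected, there would be a mixed cut $V' \cup V'' \cup E'$ with $|V'|+|E'| \le k-1$ and $\big|f(V'')\big| \le \ell$. Since the coloring is injective, $|V''| = |f(V'')| \le \ell$, so $V' \cup V'' \cup E'$ is a mixed cut of total size $|V'| + |V''| + |E'| \le (k-1) + \ell = k+\ell-1$, contradicting the $(k+\ell)$-vertex-connectivity of $G$. This direction is immediate and is consistent with Remark~\ref{remark:spec_colorings}.4.

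For the ``only if'' direction I would argue by contraposition: assuming $G$ is not $(k+\ell)$-vertex-connected, I produce, for an arbitrary coloring $f$, a forbidden cut. By the Menger characterization there is a mixed cut of size at most $k+\ell-1$; the first step is to replace it by a \emph{pure vertex cut} $W$ with $|W| \le k+\ell-1$, by substituting for each cut edge one of its endpoints (removing a vertex only deletes more of the graph). Once the cut consists of vertices only, I split it according to colors exactly as in Proposition~\ref{edge-prop-1}: set $C' = f(W)$, and if $|C'| \le \ell$ take $V'' = W$, while if $|C'| > \ell$ choose any $\ell$ colors $C'' \subseteq C'$ and let $V''$ be the vertices of $W$ colored from $C''$. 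Since each of the $\ell$ chosen colors occurs on at least one vertex of $W$, we get $|V''| \ge \ell$, hence the remaining vertices $V' = W - V''$ satisfy $|V'| \le |W| - \ell \le k-1$, while $\big|f(V'')\big| \le \ell$. Thus $V' \cup V''$ (with $E' = \emptyset$) is a forbidden cut, so $f$ is not internally vertex-$\ell$-color-avoiding $k$-vertex-connected.

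The step I expect to be the main obstacle is the reduction of a general mixed cut to a pure vertex cut of the same size: replacing a cut edge by an endpoint can fail to leave both sides nonempty precisely when one side is a single vertex, and the reduction genuinely breaks down for the degenerate multigraphs that the paper's convention declares $k$-vertex-connected (for instance, few parallel edges on two vertices), where a minimum cut is forced to consist of edges that cannot be absorbed into the $\ell$-color budget. I would therefore treat these small and multigraph cases separately, and in the generic case justify the edge-to-vertex replacement carefully, handling the ``isolated endpoint'' subcase by taking the neighborhood of that vertex as the vertex cut. Finally, statements~\ref{item2:ivertexcoloring-vertex} and~\ref{item3:ivertexcoloring-vertex} follow after replacing undirected mixed cuts and Menger's theorem by their strong and $r$-rooted directed counterparts, the unique-coloring and color-splitting arguments being otherwise identical.
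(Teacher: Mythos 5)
Your proof is correct and takes essentially the same route as the paper's: the unique-coloring argument for sufficiency, and for necessity the splitting of the colors of a vertex cut of size at most $k+\ell-1$ into $\ell$ chosen colors plus at most $k-1$ leftover vertices. The only difference is that the paper simply asserts that failure of $(k+\ell)$-vertex-connectivity yields a \emph{vertex} cut of that size and does not carry out the mixed-cut-to-vertex-cut reduction or address the degenerate multigraph cases you rightly flag.
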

\begin{proof}
 We only prove \ref{item1:ivertexcoloring-vertex} since \ref{item2:ivertexcoloring-vertex} and \ref{item3:ivertexcoloring-vertex} can be proved analogously.

 \medskip
 
 First, assume that $G$ is not $(k+\ell)$-vertex-connected, i.e., there exists a set of vertices $V' \subset V$ with $|V'| \le k+\ell-1$ for which $G-V'$ is not connected. Consider an arbitrary vertex-coloring of $G$, and let $C'$ denote the set of colors of the vertices in $V'$. Let $C'' \subseteq C'$ be an arbitrary subset with $|C''| = \min \big( \ell, |C'| \big)$. Observe that there are at most $k-1$ vertices in $V'$ that have different colors from those in $C''$. Thus $V'$ is a vertex cut consisting of vertices of at most $\ell$ colors with the exception of at most $k-1$ additional vertices. Therefore, $G$ is not internally vertex-$\ell$-color-avoiding $k$-vertex connected.

 \smallskip
 
 Now assume that $G$ is $(k+\ell)$-vertex-connected. Then assigning a unique color to each vertex results in an internally vertex-$\ell$-color-avoiding $k$-vertex-connected graph.
\end{proof}

In contrast to the cases of edge- and vertex-1-color-avoiding 1-connectivity, the following theorem says that deciding whether a graph has an internally vertex-1-color-avoiding 1-connected coloring with a given number of colors is NP-complete.

\begin{theorem} \label{thm:ivckecvc}
 Let $G$ be a graph and let $c, k, \ell \in \mathbb{Z}_+$. Deciding whether $G$ has an internally vertex-$\ell$-color-avoiding $k$-edge-connected coloring with at most $c$ colors is NP-complete. Similarly, deciding whether $G$ has an internally vertex-$\ell$-color-avoiding $k$-vertex-connected coloring with at most $c$ colors is NP-complete. These problems remain NP-complete even if $k = \ell = 1$ and $c = 2$.
\end{theorem}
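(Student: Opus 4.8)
The plan is to reduce everything to the combinatorial reformulation already available in Proposition~\ref{proposition:equiv_def_of_IVCA}. For $k=\ell=1$ the internally vertex-$\ell$-color-avoiding $k$-edge- and $k$-vertex-connected notions coincide (both are ``internally vertex-$1$-color-avoiding $1$-connected''), and such a coloring is exactly one in which $G$ has \emph{no monochromatic vertex cut}. Hence, with $c=2$, the question becomes: can $V(G)$ be $2$-colored so that \emph{every} minimal vertex cut of $G$ receives both colors? Membership in NP is clear, the coloring being the certificate: for a fixed $2$-coloring one checks each color class by a Menger/max-flow computation to decide whether it contains a cut. So it remains to prove NP-hardness, which I would do by reducing from the NP-complete problem of $2$-coloring a $3$-uniform hypergraph (equivalently, \emph{Set Splitting} restricted to triples / monotone \textsc{Not-All-Equal}-$3$-\textsc{Sat}).

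Given a $3$-uniform hypergraph $H=(X,\mathcal{E})$ with $\mathcal{E}=\{E_1,\dots,E_m\}$, I would build $G$ as follows: keep one vertex $x_i$ for each $x_i\in X$; add two \emph{hub} vertices $u,v$, each joined to all of $X$; and for each hyperedge $E_j=\{x_a,x_b,x_c\}$ add a vertex $w_j$ joined exactly to $x_a,x_b,x_c$. This is polynomial and $G$ is $2$-connected, so by Proposition~\ref{prop:ivertexcoloring-edge} a valid coloring exists in principle; the whole content is whether $2$ colors suffice. The intended meaning of the gadget is that $N(w_j)=E_j$ is a minimal vertex cut (it isolates $w_j$, and deleting any two of $x_a,x_b,x_c$ leaves $w_j$ attached through the third), so forbidding a monochromatic cut on $E_j$ encodes the constraint that $E_j$ is not monochromatic in $H$.

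The main obstacle, and the heart of the proof, is a \emph{complete} description of the minimal vertex cuts of $G$, since spurious monochromatic cuts would break the reduction. I would prove the claim: every minimal vertex cut $S$ of $G$ either equals some $E_j$ or contains both hubs $u$ and $v$. This follows from a short case analysis on how many hubs lie in $S$. If $\{u,v\}\subseteq S$ we are in the second case. Otherwise some hub survives, say $v\notin S$; then in $G-S$ every surviving $x_i$ is adjacent to $v$ and hence lies in $v$'s component, as does every surviving $w_j$ that still has a neighbour, so the only other components are isolated vertices $w_j$ with $E_j\subseteq S$. Minimality then forces $S=E_j$ for a single such $j$; in particular $u\notin S$, because $u$ is adjacent to no $w_j$ and deleting it from $S$ would not affect the isolation, contradicting minimality. (The degenerate sub-case forcing $X\subseteq S$ only arises when every variable is deleted, and such a set is never minimal once $m\ge1$.)

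With this characterization the equivalence is routine. If $H$ has a proper $2$-coloring, colour each $x_i$ accordingly, give $u$ and $v$ the two \emph{different} colours, and colour the $w_j$ arbitrarily; then the cuts of type $E_j$ are bichromatic because the hypergraph coloring is proper, and every cut containing both hubs is bichromatic by the choice of $u,v$, so no monochromatic minimal cut remains and $G$ is internally vertex-$1$-color-avoiding $1$-connected with $2$ colors. Conversely, any internally vertex-$1$-color-avoiding $1$-connected coloring of $G$ with at most $2$ colors makes each minimal cut $E_j=N(w_j)$ non-monochromatic; restricting to $X$ therefore yields a proper $2$-coloring of $H$ (and it genuinely uses two colors, since a single colour would make every $E_j$ monochromatic once $m\ge1$). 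As the two connectivity notions coincide at $k=\ell=1$, this single reduction settles both the $k$-edge- and the $k$-vertex-connected versions simultaneously.
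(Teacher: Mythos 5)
Your proof is correct and follows essentially the same strategy as the paper: both reduce from hypergraph $2$-colorability via Proposition~\hyperlink{item:iii_of_prop_IVCA}{\ref*{proposition:equiv_def_of_IVCA}.\ref*{item:iii_of_IVCA}}, attaching a vertex $w_j$ whose neighbourhood is the hyperedge $E_j$ so that the minimal monochromatic vertex cuts encode monochromatic hyperedges. The only difference is that the paper makes the variable set a clique (so the $E_j$ are essentially the only minimal cuts), whereas you use two hub vertices, which creates the additional family of minimal cuts containing both hubs that you correctly neutralize by colouring the hubs differently.
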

\begin{proof} 
 The problem is clearly in NP. Now we show that the problem is NP-hard even for $k = \ell = 1$ and $c=2$ by reducing the NP-complete problem of recognizing 2-colorable (loopless) hypergraphs~\cite{article:hypergraph_2-colorability} to it. (A hypergraph is called 2-colorable if its vertices can be colored with two colors without creating monochromatic hyperedges.)
 
 Let $\mathcal{H}$ be an arbitrary hypergraph without loops and let $G$ be defined as follows. Let

 \vspace{-10pt}
 
 \[ V(G) \colonequals \big\{ v \bigm| v \in V(\mathcal{H}) \big\} \cup \big\{ w_e \bigm| e \in E(\mathcal{H}) \big\} \]

 \vspace{-5pt}
 
 \noindent and
 
 \vspace{-10pt}
 
 \[ E(G) \colonequals \big\{ \{ u, v \} \bigm| u, v \in V(\mathcal{H}) \big\} \cup \big\{ \{ v, w_e \} \bigm| v \in V(\mathcal{H}), \, e \in E(\mathcal{H}), \, v \in e \big\} \text{.} \]

 \vspace{-2pt}
 
 By Proposition~\hyperlink{item:iii_of_prop_IVCA}{\ref*{proposition:equiv_def_of_IVCA}.\ref*{item:iii_of_IVCA}}, it is not difficult to see that if $\mathcal{H}$ has a proper 2-vertex-coloring with red and blue colors, then using the same vertex-coloring for the corresponding vertices in $G$ and coloring the remaining vertices of $G$ red yields an internally vertex-1-color-avoiding 1-connected coloring of $G$; for an example, see Figure \ref{fig:hypergraphproof}.

 
 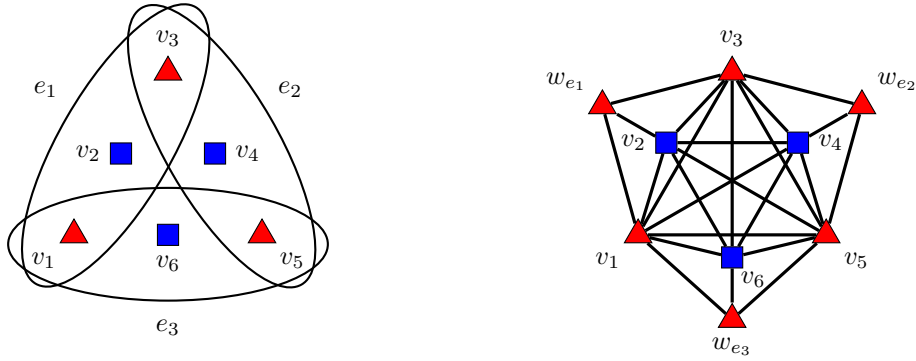
\begin{figure}[H]
 \centering
 \begin{tikzpicture}[scale=2.5]
 \tikzstyle{vertex_red}=[draw, shape=regular polygon, regular polygon sides=3, fill=red, minimum size=12pt, inner sep=0]
 \tikzstyle{vertex_blue}=[draw, shape=rectangle, fill=blue, minimum size=8pt, inner sep=0]
 \tikzstyle{vertex_green}=[draw, shape=diamond, fill=green, minimum size=11pt, inner sep=0]
 \tikzstyle{vertex_yellow}=[draw, shape=star, fill=yellow, minimum size=11pt, inner sep=0]
  
  \node[vertex_red] (v1) at (0,0) [label=below left: $v_1$] {};
  \node[vertex_blue] (v2) at (60:0.5) [label=left: $v_2$] {};
  \node[vertex_red] (v3) at (60:1) [label=above: $v_3$] {};
  \node[vertex_blue] (v4) at (30:0.8660254) [label=right: $v_4$] {};
  \node[vertex_red] (v5) at (1,0) [label=below right: $v_5$] {};
  \node[vertex_blue] (v6) at (0.5,0) [label=below: $v_6$] {};
  
 \draw[thick, shift={(65:0.525)}, rotate=-30] (0,0) ellipse (0.3 and 0.85);
 \node[label=above:{$e_1$}] at (-0.15,0.633) {};
 \draw[thick, shift={(31:0.9160254)}, rotate=30] (0,0) ellipse (0.3 and 0.85);
 \node[label=above:{$e_2$}] at (1.15,0.633) {};
 \draw[thick, shift={(0.5,-0.05)}] (0,0) ellipse (0.85 and 0.3);
 \node[label=below:{$e_3$}] at (0.5,-0.35) {};
 
 \begin{scope}[shift={(3,0)}]
 
  \node[vertex_red] (v1) at (0,0) [label=below left: $v_1$] {};
  \node[vertex_blue, shift={(-0.25,0.144)}] (v2) at (60:0.5) [label=left: $v_2$] {};
  \node[vertex_red] (v3) at (60:1) [label=above: $v_3$] {};
  \node[vertex_blue, shift={(0.25,0.144)}] (v4) at (30:0.8660254) [label=right: $v_4$] {};
  \node[vertex_red] (v5) at (1,0) [label=below right: $v_5$] {};
  \node[vertex_blue, shift={(0,-0.3)}] (v6) at (0.5,0) [label={[xshift=8pt, yshift=-19pt]:$v_6$}] {};
  
  \node[vertex_red, shift={(-1.1,0.635)}] (we1) at (60:0.5) [label=above left: $w_{e_1}$] {};
  \node[vertex_red, shift={(1.1,0.635)}] (we2) at (30:0.8660254) [label=above right: $w_{e_2}$] {};
  \node[vertex_red] (we3) at (0.5,-0.45) [label=below: $w_{e_3}$] {};
  
  \draw[very thick] (v1)--(v2);
  \draw[very thick] (v1)--(v3);
  \draw[very thick] (v1)--(v4);
  \draw[very thick] (v1)--(v5);
  \draw[very thick] (v1)--(v6);
  \draw[very thick] (v2)--(v3);
  \draw[very thick] (v2)--(v4);
  \draw[very thick] (v2)--(v5);
  \draw[very thick] (v2)--(v6);
  \draw[very thick] (v3)--(v4);
  \draw[very thick] (v3)--(v5);
  \draw[very thick] (v3)--(v6);
  \draw[very thick] (v4)--(v5);
  \draw[very thick] (v4)--(v6);
  \draw[very thick] (v5)--(v6);
  \draw[very thick] (we1)--(v1);
  \draw[very thick] (we1)--(v2);
  \draw[very thick] (we1)--(v3);
  \draw[very thick] (we2)--(v3);
  \draw[very thick] (we2)--(v4);
  \draw[very thick] (we2)--(v5);
  \draw[very thick] (we3)--(v5);
  \draw[very thick] (we3)--(v6);
  \draw[very thick] (we3)--(v1);
 \end{scope}
 \end{tikzpicture}
 \caption{A hypergraph $\mathcal{H}$ with a proper 2-coloring and the corresponding internally vertex-1-color-avoiding 1-connected graph $G$ constructed in the proof of Theorem~\ref{thm:ivckecvc}.}
 \label{fig:hypergraphproof}
\end{figure}
 
 Conversely, if $G$ has an internally vertex-1-color-avoiding 1-connected coloring with red and blue colors, then assigning the same colors of the corresponding vertices in $\mathcal{H}$ (regardless the colors of the remaining vertices of $G$) yields a proper 2-vertex-coloring of $\mathcal{H}$.
\end{proof}

Let us note that a graph has an internally vertex-1-color-avoiding $k$-edge-connected coloring with one color if and only if the graph is a complete graph on at least $k+1$ vertices.

The following corollary is a straightforward consequence of Theorem~\ref{thm:ivckecvc} for internally vertex-color-avoiding strongly connected colorings.

\begin{corollary} \label{cor:ivckecvc}
 Let $D$ be a digraph and let $c, k, \ell \in \mathbb{Z}_+$. Deciding whether $D$ has an internally vertex-$\ell$-color-avoiding strongly $k$-arc-connected coloring with at most $c$ colors is NP-complete. The problem remains NP-complete even if $k = \ell = 1$ and $c=2$.
\end{corollary}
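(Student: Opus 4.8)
The plan is to obtain NP-hardness by reducing from the \emph{undirected} internally vertex-$1$-color-avoiding $1$-connected coloring problem, which is NP-complete by Theorem~\ref{thm:ivckecvc} in the case $k=\ell=1$, $c=2$. Membership in NP is immediate, exactly as in the preceding corollaries: a coloring with at most $c$ colors is a polynomial-size certificate, and for $\ell=1$ one verifies it by checking, for each of the at most $c$ colors, that deleting the vertices of that color leaves a strongly $k$-arc-connected digraph (or at most one vertex), which is a polynomial-time strong-arc-connectivity test.

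For the hardness reduction, given an instance $G$ of the undirected problem (with $k=\ell=1$, $c=2$), I would construct the digraph $D$ on the same vertex set by replacing every edge $\{u,v\}$ of $G$ with the two antiparallel arcs $(u,v)$ and $(v,u)$; this is precisely the bidirection operation used in the examples following Table~\ref{table:summary_def_two_vertices}. The construction is clearly polynomial, and since it does not alter the vertex set, any candidate vertex-coloring of $G$ is simultaneously a candidate vertex-coloring of $D$.

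The crux of the argument is the following vertex-deletion equivalence, which I would prove for an arbitrary vertex set $S$: the digraph $D-S$ is strongly connected if and only if the undirected graph $G-S$ is connected. The forward direction is immediate, since the underlying undirected graph of a strongly connected digraph is connected. For the converse, any $u$-$v$ path in $G-S$ yields both a $u$-$v$ dipath and a $v$-$u$ dipath in $D-S$ by traversing the two antiparallel copies of each edge along the path, so every ordered pair of remaining vertices is mutually reachable. Consequently, a vertex set is a monochromatic vertex cut of $G$ in the sense of Proposition~\ref{proposition:equiv_def_of_IVCA} exactly when it is a monochromatic vertex set whose removal destroys the strong connectivity of $D$. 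Invoking the characterization of internally vertex-$1$-color-avoiding $1$-connectivity through the absence of monochromatic vertex cuts, I conclude that a given $2$-coloring is an internally vertex-$1$-color-avoiding $1$-connected coloring of $G$ if and only if it is an internally vertex-$1$-color-avoiding strongly $1$-connected coloring of $D$. Thus $G$ is a yes-instance of the undirected problem if and only if $D$ is a yes-instance of the directed problem with $k=\ell=1$ and $c=2$, which completes the reduction.

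The only genuinely delicate point is the vertex-deletion version of this ``bidirection preserves connectivity'' equivalence; once it is pinned down, the rest is bookkeeping. I would take care to phrase it so that the degenerate cases (at most one remaining vertex) are handled exactly as the definitions of strong $1$-connectivity and of internally vertex-$1$-color-avoiding strong connectivity prescribe, so that the reduction matches the stated conventions verbatim.
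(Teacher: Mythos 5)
Your proposal is correct and matches the paper's intended argument: the paper states this corollary as a straightforward consequence of Theorem~\ref{thm:ivckecvc}, with the implicit mechanism being exactly the bidirection reduction (replacing each edge by two antiparallel arcs), an operation the paper itself uses when discussing the directed analogues of its examples. Your explicit verification that vertex deletion commutes with bidirection, so that monochromatic vertex cuts coincide in $G$ and $D$, is precisely the routine check the authors left to the reader.
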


Finally, let us consider the analogous problem for internally vertex-color-avoiding rooted connected colorings.

\begin{theorem} \label{thm:rooted_iv_mincolor}
 Let $D$ be a digraph with a given vertex $r \in V(D)$ and let $c, k, \ell \in \mathbb{Z}_+$. Deciding whether $D$ has an internally vertex-$\ell$-color-avoiding $r$-rooted $k$-edge-connected coloring with at most $c$ colors is NP-complete. Similarly, deciding whether $G$ has an internally vertex-$\ell$-color-avoiding $k$-vertex-connected coloring with at most $c$ colors is NP-complete. These problems remain NP-complete even if $k = \ell = 1$ and $c = 2$.
\end{theorem}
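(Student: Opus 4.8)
The plan is to mirror the reduction used for Theorem~\ref{thm:ivckecvc}, adapting the construction to the rooted directed setting. Both decision problems lie in NP, since given a colouring with at most $c$ colours one can, for each of the polynomially many choices of at most $\ell$ colour classes to delete, test by a single reachability computation whether the required $r$-rooted connectivity survives. For the hardness part I would again reduce from the NP-complete problem of deciding whether a loopless hypergraph $\mathcal{H}$ is properly $2$-colourable~\cite{article:hypergraph_2-colorability}, and it suffices to treat the case $k=\ell=1$ and $c=2$; since a digraph is rooted $1$-arc-connected if and only if it is rooted $1$-vertex-connected (both meaning rooted connected), the same construction will settle the edge- and the vertex-connected versions simultaneously.

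Given $\mathcal{H}$, I would build a three-layer digraph $D$. Its vertex set is $\{r\} \cup \{v \mid v \in V(\mathcal{H})\} \cup \{w_e \mid e \in E(\mathcal{H})\}$, and its arcs are $r \to v$ for every $v \in V(\mathcal{H})$ together with $v \to w_e$ for every incidence $v \in e$. I would colour $V(\mathcal{H})$ according to a candidate $2$-colouring of $\mathcal{H}$ and colour $r$ and all the $w_e$ red. The decisive structural observation is that in any $r$-rooted dipath the vertex $r$ is the source and hence never an internal vertex, so deleting a colour class never removes $r$; consequently, after deleting all vertices of a colour $c$, the root still reaches every $v \in V(\mathcal{H})$ directly along $r \to v$ (the target $v$ is an endpoint, hence never deleted), and it reaches $w_e$ exactly when $e$ contains a vertex not coloured $c$, namely via $r \to v \to w_e$ for some $v \in e$ of colour different from $c$.

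From this I would conclude that the $2$-coloured $D$ is internally vertex-$1$-color-avoiding $r$-rooted $1$-connected if and only if no hyperedge of $\mathcal{H}$ is monochromatic, that is, if and only if the chosen colouring restricts to a proper $2$-colouring of $\mathcal{H}$. One direction extends a proper $2$-colouring of $\mathcal{H}$ to $D$ as above; the other restricts a good $2$-colouring of $D$ to $V(\mathcal{H})$ and uses that, for each colour $c$ and each $w_e$, reachability of $w_e$ forces $e$ to contain a non-$c$ vertex, so that $e$ is bichromatic. This yields the equivalence, and since $\mathcal{H}$ is loopless (every hyperedge has size at least two) the instance is well-posed: each $w_e$ then has at least two in-neighbours, so $D - V'$ stays $r$-rooted connected for every single vertex set $V' \subseteq V(D)-\{r\}$, whence by Proposition~\ref{prop:ivertexcoloring-edge} a colouring exists at all.

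The step I expect to require the most care is the bookkeeping around the protected root and the endpoint convention: I must verify that the only monochromatic obstructions to rooted reachability are exactly the in-neighbourhoods $e$ of the sink vertices $w_e$, since the vertices $v \in V(\mathcal{H})$ can never be separated from $r$ (the single-arc path $r \to v$ has no internal vertices and $r$ cannot be deleted), and that the looplessness hypothesis on $\mathcal{H}$ is precisely what rules out degenerate singleton hyperedges, which would otherwise be trivially monochromatic and break the reduction. Once these points are pinned down, the argument is a direct directed analogue of the proof of Theorem~\ref{thm:ivckecvc}, and the claim for internally vertex-$1$-color-avoiding $r$-rooted $1$-vertex-connectivity follows with no further work.
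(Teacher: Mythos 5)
Your proof is correct and follows essentially the same route as the paper's: both reduce from $2$-colourability of loopless hypergraphs via a digraph in which each hyperedge $e$ is represented by a vertex $w_e$ whose in-neighbourhood is exactly $e$, so that after deleting a colour class the root still reaches $w_e$ precisely when $e$ is not monochromatic. The only difference is that your gadget is sparser (a two-layer acyclic digraph with arcs $r\to v$ and $v\to w_e$ only), whereas the paper bidirects the clique-plus-incidence construction of Theorem~\ref{thm:ivckecvc} and adds the arcs $rv$; both variants are valid, since in the rooted setting only reachability from $r$ matters and every $r$-$w_e$ dipath must enter $w_e$ through an internal vertex lying in $e$.
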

\begin{proof} 
 The problem is clearly in NP. Now we show that the problem is NP-hard even for $k = \ell = 1$ and $c=2$ by reducing the NP-complete problem of recognizing 2-colorable (loopless) hypergraphs~\cite{article:hypergraph_2-colorability} to it.
 
 Let $\mathcal{H}$ be an arbitrary hypergraph without loops and let $D$ be defined as follows. Let
 \[ V(D) \colonequals \{ r \} \cup \big\{ v \bigm| v \in V(\mathcal{H}) \big\} \cup \big\{ w_e \bigm| e \in E(\mathcal{H}) \big\} \]
 and
 \[ E(D) \colonequals \big\{ uv, vu \bigm| u, v \in V(\mathcal{H}) \big\} \cup \big\{ v w_e, w_e v \bigm| v \in V(\mathcal{H}), \, e \in E(\mathcal{H}), \, v \in e \big\} \cup \big\{ rv \bigm| v \in V(\mathcal{H}) \big\} \text{.} \]
 
 By Proposition~\hyperlink{item:iii_of_prop_IVCA}{\ref*{proposition:equiv_def_of_IVCA}.\ref*{item:iii_of_IVCA}}, it is not difficult to see that if $\mathcal{H}$ has a proper 2-vertex-coloring with red and blue colors, then using the same vertex-coloring for the corresponding vertices in $D$ and coloring the remaining vertices of $D$ red yields an internally vertex-1-color-avoiding $r$-rooted 1-connected coloring of $D$. Conversely, if $D$ has an internally vertex-1-color-avoiding $r$-rooted 1-connected coloring with red and blue colors, then assigning the same colors of the corresponding vertices in $\mathcal{H}$ (regardless the colors of the remaining vertices of $D$) yields a proper 2-vertex-coloring of $\mathcal{H}$.
\end{proof}

For a summary of the results presented in this section, see Table~\ref{table:summary_results}.

\begin{table}[p]
{\small
\begin{center}
 \setlength{\tabcolsep}{1.4pt}
 \captionsetup{type=table}
 \rotatebox{90}{
 \begin{tabular}{!{\vrule width 1.25pt} c !{\vrule width 1.25pt} c|c !{\vrule width 1.25pt} c|c !{\vrule width 1.25pt} c|c !{\vrule width 1.25pt}} \specialrule{1.25pt}{0pt}{0pt}
   & \multicolumn{2}{c !{\vrule width 1.25pt}}{edge-$\ell$-color-avoiding} & \multicolumn{2}{c !{\vrule width 1.25pt}}{vertex-$\ell$-color-avoiding} & \multicolumn{2}{c !{\vrule width 1.25pt}}{internally vertex-$\ell$-color-avoiding} \\
   & $k$-edge-connected & $k$-vertex-connected & $k$-edge-connected & $k$-vertex-connected & $k$-edge-connected & $k$-vertex-connected \\ \specialrule{1.25pt}{0pt}{0pt}
   \multirow{5}{*}{\parbox{80pt}{\centering Necessary and \\ sufficient condition \\ on the existence of \\ a $\ldots$ coloring of $G$}} & \multirow{5}{*}{\parbox{80pt}{\centering $G$ is $(k + \ell)$-edge- \\connected \\ see Prop.~\ref{edge-prop-1}}} & \multirow{5}{*}{\parbox{80pt}{\centering $G - E'$ is \\ $k$-vertex-connected \\ for any $E' \subseteq E(G)$ \\ with $|E'| \le \ell$, \\ see Prop.~\ref{edge-prop-2}}} & \multirow{5}{*}{\parbox{80pt}{\centering $G$ is $k$-edge- \\ connected, \\ see Prop.~\ref{prop:vertexcoloring}}} & \multirow{5}{*}{\parbox{80pt}{\centering $G$ is $k$-vertex- \\ connected, \\ see Prop.~\ref{prop:vertexcoloring}}} & \multirow{5}{*}{\parbox{80pt}{\centering $G - V'$ is \\ $k$-edge-connected \\ for any $V' \subseteq V(G)$ \\ with $|V'| \le \ell$, \\ see Prop.~\ref{prop:ivertexcoloring-edge}}} & \multirow{5}{*}{\parbox{80pt}{\centering $G$ is $(k + \ell)$-vertex- \\ connected, \\ see Prop.~\ref{prop:ivertexcoloring-vertex}}} \\
   & & & & & & \\
   & & & & & & \\
   & & & & & & \\
   & & & & & & \\ \specialrule{1.25pt}{0pt}{0pt}
   \multirow{8}{*}{\parbox{80pt}{\centering Minimum number \\ of colors in a $\ldots$ \\ coloring of $G$ \\ (if exists)}} & \multicolumn{2}{c !{\vrule width 1.25pt}}{\multirow{4}{*}{\parbox{150pt}{\centering $k = \ell = 1$: \\ $\chi \big( \mathcal{M}^*(G) \big)$, see Cor.~\ref{chromatic_number_of_dual}; \\
   can be determined in \\ polynomial time, see Thm.~\ref{chromatic_number_of_matroid_polytime}}}} & \multicolumn{2}{c !{\vrule width 1.25pt}}{\multirow{8}{*}{\parbox{80pt}{\centering 1, \\ see Cor.~\ref{cor:vertexmincolor}}}} & \multicolumn{2}{c !{\vrule width 1.25pt}}{\multirow{8}{*}{\parbox{80pt}{\centering NP-complete, \\ see Thm.~\ref{thm:ivckecvc}}}} \\
   & \multicolumn{2}{c !{\vrule width 1.25pt}}{} & \multicolumn{2}{c !{\vrule width 1.25pt}}{} & \multicolumn{2}{c !{\vrule width 1.25pt}}{} \\
   & \multicolumn{2}{c !{\vrule width 1.25pt}}{} & \multicolumn{2}{c !{\vrule width 1.25pt}}{} & \multicolumn{2}{c !{\vrule width 1.25pt}}{} \\
   & \multicolumn{2}{c !{\vrule width 1.25pt}}{} & \multicolumn{2}{c !{\vrule width 1.25pt}}{} & \multicolumn{2}{c !{\vrule width 1.25pt}}{} \\ \cline{2-3}
   & \multicolumn{2}{c !{\vrule width 1.25pt}}{\multirow{2}{*}{\parbox{150pt}{\centering $k = 1$, $\ell \ge 2$: \\ open}}} & \multicolumn{2}{c !{\vrule width 1.25pt}}{} & \multicolumn{2}{c !{\vrule width 1.25pt}}{} \\
   & \multicolumn{2}{c !{\vrule width 1.25pt}}{} & \multicolumn{2}{c !{\vrule width 1.25pt}}{} & \multicolumn{2}{c !{\vrule width 1.25pt}}{} \\ \cline{2-3}
   & \multicolumn{2}{c !{\vrule width 1.25pt}}{\multirow{2}{*}{\parbox{150pt}{\centering $k \ge 2$: \\ NP-complete, see Cor.~\ref{cor:Kotzig}}}} & \multicolumn{2}{c !{\vrule width 1.25pt}}{} & \multicolumn{2}{c !{\vrule width 1.25pt}}{} \\
   & \multicolumn{2}{c !{\vrule width 1.25pt}}{} & \multicolumn{2}{c !{\vrule width 1.25pt}}{} & \multicolumn{2}{c !{\vrule width 1.25pt}}{} \\ \specialrule{1.25pt}{0pt}{0pt}
   \multicolumn{7}{c}{\vphantom{\Big|}} \\ \specialrule{1.25pt}{0pt}{0pt}
   & \multicolumn{2}{c !{\vrule width 1.25pt}}{arc-$\ell$-color-avoiding strongly} & \multicolumn{2}{c !{\vrule width 1.25pt}}{vertex-$\ell$-color-avoiding strongly} & \multicolumn{2}{c !{\vrule width 1.25pt}}{internally vertex-$\ell$-color-avoiding strongly} \\
   & $k$-arc-connected & $k$-vertex-connected & $k$-arc-connected & $k$-vertex-connected & $k$-arc-connected & $k$-vertex-connected \\ \specialrule{1.25pt}{0pt}{0pt}
   \multirow{5}{*}{\parbox{80pt}{\centering Necessary and \\ sufficient condition \\ on the existence of \\ a $\ldots$ coloring of $D$}} & \multirow{5}{*}{\parbox{80pt}{\centering $D$ is strongly \\ $(k + \ell)$-arc- \\ connected, \\ see Prop.~\ref{edge-prop-1}}} & \multirow{5}{*}{\parbox{80pt}{\centering $D - E'$ is strongly \\ $k$-vertex-connected \\ for any $E' \subseteq E(D)$ \\ with $|E'| \le \ell$, \\ see Prop.~\ref{edge-prop-2}}} & \multirow{5}{*}{\parbox{80pt}{\centering $D$ is strongly \\ $k$-arc-connected, \\ see Prop.~\ref{prop:vertexcoloring}}} & \multirow{5}{*}{\parbox{80pt}{\centering $D$ is strongly \\ $k$-vertex-connected, \\ see Prop.~\ref{prop:vertexcoloring}}} & \multirow{5}{*}{\parbox{80pt}{\centering $D - V'$ is strongly \\ $k$-arc-connected \\ for any $V' \subseteq V(D)$ \\ with $|V'| \le \ell$, \\ see Prop.~\ref{prop:ivertexcoloring-edge}}} & \multirow{5}{*}{\parbox{80pt}{\centering $D$ is strongly $(k + \ell)$-vertex- \\ connected, \\ see Prop.~\ref{prop:ivertexcoloring-vertex}}} \\
   & & & & & & \\
   & & & & & & \\
   & & & & & & \\
   & & & & & & \\ \specialrule{1.25pt}{0pt}{0pt}
   \multirow{4}{*}{\parbox{80pt}{\centering Minimum number \\ of colors in a $\ldots$ \\ coloring of $D$ \\ (if exists)}} & \multicolumn{2}{c !{\vrule width 1.25pt}}{\multirow{4}{*}{\parbox{150pt}{\centering NP-complete, \\ see Cor.~\ref{cor:Yeo}}}} & \multicolumn{2}{c !{\vrule width 1.25pt}}{\multirow{4}{*}{\parbox{80pt}{\centering 1, \\ see Cor.~\ref{cor:vertexmincolor}}}} & \multicolumn{2}{c !{\vrule width 1.25pt}}{\multirow{4}{*}{\parbox{80pt}{\centering NP-complete, \\ see Cor.~\ref{cor:ivckecvc}}}} \\
   & \multicolumn{2}{c !{\vrule width 1.25pt}}{} & \multicolumn{2}{c !{\vrule width 1.25pt}}{} & \multicolumn{2}{c !{\vrule width 1.25pt}}{} \\
   & \multicolumn{2}{c !{\vrule width 1.25pt}}{} & \multicolumn{2}{c !{\vrule width 1.25pt}}{} & \multicolumn{2}{c !{\vrule width 1.25pt}}{} \\
   & \multicolumn{2}{c !{\vrule width 1.25pt}}{} & \multicolumn{2}{c !{\vrule width 1.25pt}}{} & \multicolumn{2}{c !{\vrule width 1.25pt}}{} \\ \specialrule{1.25pt}{0pt}{0pt}
   \multicolumn{7}{c}{\vphantom{\Big|}} \\ \specialrule{1.25pt}{0pt}{0pt}
   & \multicolumn{2}{c !{\vrule width 1.25pt}}{arc-$\ell$-color-avoiding rooted} & \multicolumn{2}{c !{\vrule width 1.25pt}}{vertex-$\ell$-color-avoiding rooted} & \multicolumn{2}{c !{\vrule width 1.25pt}}{internally vertex-$\ell$-color-avoiding rooted} \\
   & $k$-arc-connected & $k$-vertex-connected & $k$-arc-connected & $k$-vertex-connected & $k$-arc-connected & $k$-vertex-connected \\ \specialrule{1.25pt}{0pt}{0pt}
   \multirow{5}{*}{\parbox{80pt}{\centering Necessary and \\ sufficient condition \\ on the existence of \\ a $\ldots$ coloring of $D$}} & \multirow{5}{*}{\parbox{80pt}{\centering $D$ is $r$-rooted \\ $(k + \ell)$-arc- \\ connected, \\ see Prop.~\ref{edge-prop-1}}} & \multirow{5}{*}{\parbox{80pt}{\centering $D - E'$ is $r$-rooted \\ $k$-vertex-connected \\ for any $E' \subseteq E(D)$ \\ with $|E'| \le \ell$, \\ see Prop.~\ref{edge-prop-2}}} & \multirow{5}{*}{\parbox{80pt}{\centering $D$ is $r$-rooted \\ $k$-arc-connected, \\ see Prop.~\ref{prop:vertexcoloring}}} & \multirow{5}{*}{\parbox{80pt}{\centering $D$ is $r$-rooted \\ $k$-vertex-connected, \\ see Prop.~\ref{prop:vertexcoloring}}} & \multirow{5}{*}{\parbox{80pt}{\centering $D - V'$ is $r$-rooted \\ $k$-arc-connected for \\ any $V' \! \subseteq \! V(D) \! - \! \{ r \}$ \\ with $|V'| \le \ell$, \\ see Prop.~\ref{prop:ivertexcoloring-edge}}} & \multirow{5}{*}{\parbox{80pt}{\centering $D$ is $r$-rooted \\ $(k + \ell)$-vertex- \\ connected, \\ see Prop.~\ref{prop:ivertexcoloring-vertex}}} \\
   & & & & & & \\
   & & & & & & \\
   & & & & & & \\
   & & & & & & \\ \specialrule{1.25pt}{0pt}{0pt}
   \multirow{4}{*}{\parbox{80pt}{\centering Minimum number \\ of colors in a $\ldots$ \\ coloring of $D$ \\ (if exists)}} & \multicolumn{2}{c !{\vrule width 1.25pt}}{\multirow{2}{*}{\parbox{150pt}{\centering $k=1$: \\ $\ell+1$, see Thm.~\ref{thm:rooted_mincolors}}}} & \multicolumn{2}{c !{\vrule width 1.25pt}}{\multirow{4}{*}{\parbox{80pt}{\centering 1, \\ see Cor.~\ref{cor:vertexmincolor}}}} & \multicolumn{2}{c !{\vrule width 1.25pt}}{\multirow{4}{*}{\parbox{80pt}{\centering NP-complete, \\ see Thm.~\ref{thm:rooted_iv_mincolor}}}} \\
   & \multicolumn{2}{c !{\vrule width 1.25pt}}{} & \multicolumn{2}{c !{\vrule width 1.25pt}}{} & \multicolumn{2}{c !{\vrule width 1.25pt}}{} \\ \cline{2-3}
   & \multicolumn{2}{c !{\vrule width 1.25pt}}{\multirow{2}{*}{\parbox{150pt}{\centering $k \ge 2$: \\ open}}} & \multicolumn{2}{c !{\vrule width 1.25pt}}{} & \multicolumn{2}{c !{\vrule width 1.25pt}}{} \\
   & \multicolumn{2}{c !{\vrule width 1.25pt}}{} & \multicolumn{2}{c !{\vrule width 1.25pt}}{} & \multicolumn{2}{c !{\vrule width 1.25pt}}{} \\ \specialrule{1.25pt}{0pt}{0pt}
 \end{tabular}
 } \vspace{-8pt}
 \captionof{table}{Summary of the results in Section~\ref{section:colorings}.}
\label{table:summary_results}
\end{center}
}
\end{table}

\section{Color-avoiding connected orientations} \label{section:orientations}

In this section, we study the problem of deciding whether the edges of a given edge-colored graph can be oriented so that the obtained digraph is arc-1-color-avoiding strongly or rooted 1-connected. A famous result for the uncolored, strongly connected version is from Nash-Williams~\cite{article:strongly_k-arc-connected_orientations}, saying that a graph $G$ has a strongly $k$-arc-connected orientation if and only if $G$ is $2k$-edge-connected (where $k \in \mathbb{Z}_+$), but the case $k=1$ was previously settled by Robbins~\cite{article:robbins}. Thomassen~\cite{article:thomassen} proved that a graph $G$ has a strongly 2-vertex-connected orientation if and only if $G$ is 4-edge-connected and $G-v$ is 2-edge-connected for every vertex $v \in V(G)$. However, it is not known whether a strongly 2-vertex-connected orientation can be found in polynomial time. Durand de Gevigney showed that for any integer $k \ge 3$, the problem of deciding whether a given graph has a strongly $k$-vertex-connected orientation is NP-complete~\cite{article:durand_de_gevigney}. On the other hand, a strongly $k$-arc-connected orientation can be found in polynomial time, see for example~\cites{frank3,gabow2,iwata}.

It is not difficult to see that if an edge-colored graph has an arc-1-color-avoiding strongly 1-connected orientation, then the graph must be edge-1-color-avoiding 2-edge-connected. However, there exist edge-1-color-avoiding 2-edge-connected graphs that do not have an arc-1-color-avoiding strongly 1-connected orientation; for an example, see the first graph in Figure~\ref{figure:ECA_Strong_Orientation_Proof}.

\begin{figure}[H]
 \centering
 \begin{tikzpicture}[scale=2]
 \tikzstyle{vertex}=[draw,circle,fill,minimum size=10,inner sep=0]
 \tikzset{paint/.style={draw=#1!50!black, fill=#1!50}, decorate with/.style = {decorate, decoration={shape backgrounds, shape=#1, shape size = 5pt, shape sep = 6pt}}}
 \tikzstyle{edge_red}=[draw, decorate with = isosceles triangle, paint = red]
 \tikzstyle{edge_blue}=[draw, decorate with = rectangle, decoration = {shape size = 4pt, shape sep = 6.5pt}, paint = blue]
 \tikzstyle{edge_green}=[draw, decorate with = diamond, paint = green]
 \tikzstyle{edge_yellow}=[draw, decorate with = star, paint = yellow]
 
 \begin{scope}[shift={(4.25,0)}, scale=0.75]
  \node[vertex] (u1) at (90:0.7) [label=above: $r$] {};
  \node[vertex] (u2) at (210:0.7)  {};
  \node[vertex] (u3) at (330:0.7)  {};

  \draw[edge_red] (u1) -- (u2);
  \draw[edge_blue] (u2) -- (u3);
  \draw[edge_green] (u1) -- (u3);
 \end{scope}
  \node[vertex] (a1) at (0,0) {};
  \node[vertex] (b1) at (1.25,0) {};
  
  \draw[edge_red] (a1) [bend left=30] to (b1);
  \draw[edge_blue] (a1) -- (b1);
  \draw[edge_green] (a1) [bend right=30] to (b1);
 \end{tikzpicture}
 \caption{The first graph is an edge-1-color-avoiding 2-edge-connected graph that does not have an arc-1-color-avoiding strongly 1-connected orientation: in each orientation, either all three edges are oriented in the same direction -- which is clearly not an arc-1-color-avoiding strongly 1-connected orientation --, or one of them, let us say the red edge (denoted by triangles), is oriented in the opposite direction than the other two -- in which case, after the removal of the red arc(s), the remaining digraph is not strongly connected anymore. The second graph is an edge-1-color-avoiding graph that does not have an arc-1-color-avoiding $r$-rooted 1-connected orientation: without loss of generality, we can assume that the edges incident to $r$ are oriented away from $r$ (otherwise we can re-orient these edges), but neither orientation of the remaining edge yields an arc-1-color-avoiding $r$-rooted 1-connected orientation.}
 \label{figure:ECA_Strong_Orientation_Proof}
\end{figure}

It is not difficult to see that an uncolored graph $G$ graph has a rooted connected orientation if and only if $G$ is connected: clearly, if $G$ has a rooted connected orientation then it must be connected, and if $G$ is connected then it has a spanning tree, and by picking an arbitrary vertex of $G$ and orienting every edge of this spanning tree away from $r$ and orienting the remaining edges arbitrarily, we get a rooted connected orientation. Combining the previously mentioned Tutte~\cite{article:Tutte_edge_disjoint_spanning_trees} and Nash-William~\cite{article:NashWilliams_edge_disjoint_spanning_trees} with Theorem~\ref{thm:rootededmonds}, a graph has a rooted $k$-arc-connected orientation (where $k \in \mathbb{Z}_+$) if and only if $|F| \ge k \cdot \big( c(G-F) - 1 \big)$ for any $F \subseteq E(G)$. Moreover, an actual rooted $k$-arc-connected orientation and a rooted 2-vertex-connected orientation can be found in polynomial time, see for example~\cites{frank2,gabow} and~\cites{article:whitty, thesis:plehn, article:cheriyan}, respectively. On the other hand, Durand de Gevigney showed that for any integer $k \ge 3$, the problem of deciding whether a given graph has a rooted $k$-vertex-connected orientation is NP-complete~\cite{thesis:de_gevigney}.

Clearly, if an edge-colored graph has an arc-1-color-avoiding rooted 1-connected orientation, then the graph must be edge-1-color-avoiding 1-connected. However, there exist edge-1-color-avoiding 1-connected graphs that do not have an arc-1-color-avoiding rooted 1-connected orientation; for an example, see the second graph in Figure~\ref{figure:ECA_Strong_Orientation_Proof}.

\subsection{Arc-color-avoiding connected orientations}

First, let us consider arc-color-avoiding strongly connected orientations. We prove that it is NP-complete to decide whether an edge-colored graph admits an arc-color-avoiding strongly connected orientation. 

\begin{theorem} \label{thm:ECA_orientability_strong}
 Given an edge-colored graph $G$ and numbers $k, \ell \in \mathbb{Z}_+$, deciding whether $G$ has an arc-$\ell$-color-avoiding strongly $k$-arc- or $k$-vertex-connected orientation is NP-complete. This problem remains NP-complete even if $k = \ell = 1$.
\end{theorem}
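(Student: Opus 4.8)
The plan is to first settle membership in NP and then to prove hardness for the smallest case $k=\ell=1$, where (by the conventions recalled in Section~\ref{section:preliminaries}) strong $1$-arc- and strong $1$-vertex-connectivity coincide, so the arc- and vertex-versions of the statement are literally the same problem: decide whether the edges of an edge-colored graph $G$ can be oriented so that deleting the arcs of any one color leaves a strongly connected digraph. Membership in NP is immediate for this case: an orientation is a polynomial-size certificate, and for a fixed orientation one only has to test, for each of the (at most $|E(G)|$) colors, whether deleting that color class leaves a strongly connected digraph, i.e.\ a polynomial number of linear-time strong-connectivity checks.

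For hardness I would reduce from the problem of $2$-colorability of loopless hypergraphs (set splitting), the same NP-complete problem used in the proof of Theorem~\ref{thm:ivckecvc}, exploiting the obstruction highlighted just before the statement: a triangle whose three edges receive pairwise distinct colors admits no arc-$1$-color-avoiding strongly connected orientation, because after deleting any one of its colors a length-$2$ path remains, and no orientation of a path is strongly connected. The guiding idea is to read an orientation of $G$ as a $2$-coloring of the hypergraph $\mathcal{H}$: a distinguished edge $\epsilon_x$ is associated with each vertex $x$ of $\mathcal{H}$, and its two possible orientations encode the two colors. The global reversal of all arcs, which always preserves arc-$1$-color-avoiding strong connectivity, corresponds exactly to swapping the two colors of $\mathcal{H}$, which is a symmetry of set splitting; hence the reduction only needs to pin down \emph{relative} orientations, never absolute ones.

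Concretely, I would build $G$ from a highly edge-connected ``backbone'' (so that the standing necessary condition, edge-$1$-color-avoiding $2$-edge-connectivity, holds no matter how the variable-edges are oriented) together with one \emph{clause gadget} per hyperedge $e=\{x_1,\dots,x_t\}$. Each clause gadget is wired to the edges $\epsilon_{x_1},\dots,\epsilon_{x_t}$ and colored so that, if all of them are oriented the same way (i.e.\ $e$ is monochromatic), the gadget is forced into a configuration in which, after deleting a suitable single color, a bridge or a forced source/sink appears---exactly the distinctly-colored-triangle obstruction---and strong connectivity is destroyed; whereas if the orientations disagree the gadget can always be completed to an orientation that survives every single-color deletion. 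Given such gadgets, $\mathcal{H}$ has a proper $2$-coloring if and only if $G$ has an arc-$1$-color-avoiding strongly connected orientation, and since $G$ is polynomial in the size of $\mathcal{H}$ this yields NP-hardness of the $k=\ell=1$ case, hence of the general problem.

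The main obstacle, and the technical heart of the argument, is the design of the clause gadget: I must turn the \emph{unconditional} non-orientability of a distinctly colored triangle into a \emph{conditional} not-all-equal constraint, so that the gadget blocks a valid orientation precisely when its incident variable-edges agree, while never by itself violating the edge-$1$-color-avoiding $2$-edge-connectivity of $G$ (otherwise no instance would be a yes-instance). Establishing both implications---monochromatic $\Rightarrow$ some single color together with the gadget's forced arcs exposes a cut under one deletion, and non-monochromatic $\Rightarrow$ a global color-robust strongly connected orientation exists---will require carefully controlling how many colors each gadget introduces and how its ports attach to the $\epsilon_x$; this is exactly where the threshold between the easy ($2$-color) and hard (many-color) behaviour of the problem is created. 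One could alternatively attempt a reduction from Theorem~\ref{Yeo}, encoding the two strongly connected spanning subdigraphs by the orientation, but the set-splitting formulation keeps the correspondence with the triangle obstruction the most transparent.
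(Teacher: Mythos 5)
Your high-level strategy is the same as the paper's: reduce from a not-all-equal-type problem (you propose hypergraph $2$-colorability; the paper uses \textsc{Positive-Linear-NAE-3SAT-Exact4}, which is the same kind of constraint), encode the two truth values of a variable $x$ by the two orientations of a distinguished edge $\epsilon_x$, and arrange clause gadgets so that a monochromatic clause forces a directed cut after the deletion of a single color. However, the proposal has a genuine gap: the clause gadget is never constructed. You yourself identify its design as ``the technical heart of the argument'' and then leave it as a specification of desired behaviour rather than an actual graph with an actual coloring. Everything that makes the theorem true lives in that gadget, so as written the argument does not go through.

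Two concrete dangers in your sketch illustrate why the gadget cannot be waved at. First, your ``highly edge-connected backbone'' is in tension with the obstruction you want to create: if the backbone is connected enough to guarantee edge-$1$-color-avoiding $2$-edge-connectivity for free, it may also provide alternative routes around any cut your gadget tries to expose, killing the monochromatic-$\Rightarrow$-no-orientation direction. The paper avoids this by making each clause vertex $w_j$ reachable, after the deletion of color $j$, \emph{only} through the single uncolored edges $u_iv_i$ of the variables of $C_j$ (the edges $sw_j$ carry color $j$ and vanish, and the edges $w_{j'}u_i$ for the other clauses $j'$ containing $x_i$ carry color $j$ in their lists and also vanish); the bottleneck is engineered by assigning the color list $A_i\setminus\{j\}$ to the $w_ju_i$ edges, which is exactly the kind of quantitative bookkeeping your sketch omits. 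Second, the paper first builds the reduction with \emph{lists} of colors on edges and then must do additional surgery (subdividing parallel pairs into paths of single-colored pairs, adding an auxiliary vertex $s''$, and using the bound on the number of occurrences of each variable) to get down to one color per edge; your proposal does not mention this step at all, and without it the constructed instance is not an instance of the stated problem. So the approach is plausible and essentially the right one, but the proof is not there yet.
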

\begin{proof}
 This problem is obviously in NP, and to show that it is NP-hard even for the case $k = \ell = 1$, we reduce the NP-complete problem \textsc{Positive-Linear-NAE-3SAT-Exact4}~\cite{Darmann2020Simple} to it. Our goal in the problem \textsc{Positive-Linear-NAE-3SAT-Exact4} is to decide whether a Boolean formula in conjunctive normal form where each clause contains 3 literals, each literal is not negated and appears in exactly four distinct clauses, each pair of distinct clauses shares at most one variable can be satisfied in such a way that for each clause, not all literals have the same truth value.
 
 First, we assign a list of colors to each edge, and then show how to modify the construction so that every edge is assigned a single color.

 Let $\varphi$ be an an instance of \textsc{Positive-Linear-NAE-3SAT-Exact4} with $n$ variables $x_1, \ldots, x_n$ and with $m \ge 11$ clauses $C_1, \ldots, C_m$. Let $G$ be defined as follows. Let
 \[ V(G) \colonequals \{ s, s' \} \cup \big\{ u_i, v_i \bigm| i \in[n] \big\} \cup \big\{ w_j \bigm| j \in [m] \big\} . \]
 For each $i \in [n]$, connect the vertices $u_i$ and $v_i$ with an edge whose list of colors is $\emptyset$, and connect the vertices $v_i$ and $s'$ with two parallel edges, both having a list of colors $\emptyset$. For each $j \in [m]$, connect the vertices $s$ and $w_j$ with two parallel edges, both having a list of colors $\{j\}$.
 
 For each $i \in [n]$, let $A_i$ denote the set of indices of the clauses containing the variable $x_i$.
 For each $i \in [n]$ and $j \in [m]$, if the variable $x_i$ is contained in the clause $C_j$, then connect $w_j$ and $u_i$ with two parallel edges, both having a list of colors $A_i \setminus \{j\}$. Clearly, $G$ can be constructed in polynomial time. For an example, see Figure~\ref{fig:ECA_orientability_1}.

 \begin{figure}[p]
 \centering
 \begin{tikzpicture}
    \begin{scope}[scale=0.96, shift={(0,0)}]
        \tikzstyle{vertex}=[draw,circle,fill,minimum size=10,inner sep=0]
        \tikzset{paint/.style={draw=#1!50!black, fill=#1!50}, decorate with/.style = {decorate, decoration={shape backgrounds, shape=#1, shape size = 5pt, shape sep = 6pt}}}

        \tikzset{edge_red/.style={postaction={decorate, decoration={markings, mark=between positions 3pt and 1-2pt step 7pt with {\draw[red!50!black,thin,fill=red!50] (30:0.08) -- (150:0.08) -- (210:0.08) -- (330:0.08) -- (30:0.08);}}}}}

        \tikzset{edge_blue/.style={postaction={decorate, decoration={markings, mark=between positions 5pt and 1-2pt step 6pt with {\draw[blue!50!black,thin,fill=blue!50] (0:0.08) -- (120:0.08) -- (240:0.08) -- (0:0.08);}}}}}
 
        \tikzset{edge_green/.style={postaction={decorate, decoration={markings, mark=between positions 5pt and 1-2pt step 8.5pt with {\draw[green!50!black,thin,fill=green!50] (0:0.16) -- (90:0.08) -- (180:0.08) -- (270:0.08) -- (0:0.16);}}}}}

        \tikzset{edge_yellow/.style={postaction={decorate, decoration={markings, mark=between positions 2pt and 1-2pt step 7pt with {\draw[yellow!50!black,thin,fill=yellow!50] (0:0.09) -- (72:0.09) -- (144:0.09) -- (216:0.09) -- (288:0.09) -- (0:0.09);}}}}}

        \tikzset{edge_red_and_blue/.style={postaction={decorate, decoration={markings, mark=between positions 3pt and 1-2pt step 11pt with {\draw[red!50!black,thin,fill=red!50] (30:0.08) -- (150:0.08) -- (210:0.08) -- (330:0.08) -- (30:0.08);}}}, postaction={decorate, decoration={markings, mark=between positions 8.5pt and 1-2pt step 11pt with {\draw[blue!50!black,thin,fill=blue!50] (0:0.08) -- (120:0.08) -- (240:0.08) -- (0:0.08);}}}}}

        \tikzset{edge_red_and_green/.style={postaction={decorate, decoration={markings, mark=between positions 3pt and 1-2pt step 14.5pt with {\draw[red!50!black,thin,fill=red!50] (30:0.08) -- (150:0.08) -- (210:0.08) -- (330:0.08) -- (30:0.08);}}}, postaction={decorate, decoration={markings, mark=between positions 9.5pt and 1-8pt step 14.5pt with {\draw[green!50!black,thin,fill=green!50] (0:0.16) -- (90:0.08) -- (180:0.08) -- (270:0.08) -- (0:0.16);}}}}}

        \tikzset{edge_blue_and_green/.style={postaction={decorate, decoration={markings, mark=between positions 2pt and 1-2pt step 14pt with {\draw[blue!50!black,thin,fill=blue!50] (0:0.08) -- (120:0.08) -- (240:0.08) -- (0:0.08);}}}, postaction={decorate, decoration={markings, mark=between positions 8pt and 1-2pt step 14pt with {\draw[green!50!black,thin,fill=green!50] (0:0.16) -- (90:0.08) -- (180:0.08) -- (270:0.08) -- (0:0.16);}}}}}

        \node[vertex, black, label={left:$s$}] (B) at (-6, 0) {};
        \node[vertex, black, label={below:$w_1$}] (C1) at (-3, 3) {};
        \node[vertex, black, label={below:$w_2$}] (C2) at (-3, 1) {};
        \node[vertex, black, label={below:$w_3$}] (C3) at (-3, -1) {};
        \node[vertex, black, label={above:$w_4$}] (C4) at (-3, -3) {};
        \node[vertex, black, label={above:$u_1$}] (u1) at (0, 4) {};
        \node[vertex, black, label={above:$v_1$}] (v1) at (2, 4) {};
        \node[vertex, black, label={above:$u_2$}] (u2) at (0, 2) {};
        \node[vertex, black, label={above:$v_2$}] (v2) at (2, 2) {};
        \node[vertex, black, label={below:$u_3$}] (u3) at (0, 0) {};
        \node[vertex, black, label={below:$v_3$}] (v3) at (2, 0) {};
        \node[vertex, black, label={above:$u_4$}] (u4) at (0, -2) {};
        \node[vertex, black, label={above:$v_4$}] (v4) at (2, -2) {};
        \node[vertex, black, label={below:$u_5$}] (u5) at (0, -4) {};
        \node[vertex, black, label={below:$v_5$}] (v5) at (2, -4) {};
        \node[vertex, black, label={right:$s'$}] (Bprime) at (5.5, 0) {};

        \draw[very thick] (u1) to (v1);
        \draw[very thick] (v2) to (u2);
        \draw[very thick] (u3) to (v3);
        \draw[very thick] (u4) to (v4);
        \draw[very thick] (v5) to (u5);

        \path[edge_red] (B) to[bend right=-60] (C1);
        \path[edge_red] (C1) to[bend right=30] (B);
        \path[edge_blue] (B) to[bend right=-30] (C2);
        \path[edge_blue] (C2) to[bend right=5] (B);
        \path[edge_green] (B) to[bend right=5] (C3);
        \path[edge_green] (C3) to[bend right=-30] (B);
        \path[edge_yellow] (B) to[bend right=30] (C4);
        \path[edge_yellow] (C4) to[bend right=-60] (B);
        
        \draw[very thick] (C1) to[bend right=-37] (u1);
        \draw[very thick] (u1) to[bend right=15] (C1);
        \path[edge_blue_and_green] (C1) to[bend right=-30] (u2);
        \path[edge_blue_and_green] (u2) to[bend right=5] (C1);
        
        \path[edge_red_and_green] (C2) to[bend right=-12] (u2);
        \path[edge_red_and_green] (u2) to[bend right=-12] (C2);
        \draw[very thick] (C2) to[bend right=-10] (u3);
        \draw[very thick] (u3) to[bend right=-10] (C2);
        
        \path[edge_red_and_blue] (C3) to[bend right=10] (u2);
        \path[edge_red_and_blue] (u2) to[bend right=-30] (C3);
        \path[edge_yellow] (C3) to[bend right=-12] (u4);
        \path[edge_yellow] (u4) to[bend right=-12] (C3);

        \path[edge_green] (C4) to[bend right=5] (u4);
        \path[edge_green] (u4) to[bend right=-30] (C4);
        \draw[very thick] (C4) to[bend right=15] (u5);
        \draw[very thick] (u5) to[bend right=-37] (C4);
        
        \draw[very thick] (v1) to[bend right=-60] (Bprime);
        \draw[very thick] (Bprime) to[bend right=30] (v1);
        \draw[very thick] (v2) to[bend right=-30] (Bprime);
        \draw[very thick] (Bprime) to[bend right=5] (v2);
        \draw[very thick] (v3) to[bend right=-15] (Bprime);
        \draw[very thick] (Bprime) to[bend right=-15] (v3);
        \draw[very thick] (v4) to[bend right=5] (Bprime);
        \draw[very thick] (Bprime) to[bend right=-30] (v4);
        \draw[very thick] (v5) to[bend right=30] (Bprime);
        \draw[very thick] (Bprime) to[bend right=-60] (v5);
    \end{scope}

    \begin{scope}[shift={(0, -7.25)}, scale=0.5]
        \tikzstyle{vertex}=[draw,circle,fill,minimum size=6,inner sep=0]
        \tikzset{paint/.style={draw=#1!50!black, fill=#1!50}, decorate with/.style = {decorate, decoration={shape backgrounds, shape=#1, shape size = 5pt, shape sep = 6pt}}}

        \tikzset{edge_red/.style={postaction={decorate, decoration={markings, mark=between positions 3pt and 1-2pt step 7pt with {\draw[red!50!black,thin,fill=red!50] (30:0.08) -- (150:0.08) -- (210:0.08) -- (330:0.08) -- (30:0.08);}}}}}

        \tikzset{edge_blue/.style={postaction={decorate, decoration={markings, mark=between positions 5pt and 1-2pt step 6pt with {\draw[blue!50!black,thin,fill=blue!50] (0:0.08) -- (120:0.08) -- (240:0.08) -- (0:0.08);}}}}}
 
        \tikzset{edge_green/.style={postaction={decorate, decoration={markings, mark=between positions 5pt and 1-2pt step 8.5pt with {\draw[green!50!black,thin,fill=green!50] (0:0.16) -- (90:0.08) -- (180:0.08) -- (270:0.08) -- (0:0.16);}}}}}

        \tikzset{edge_yellow/.style={postaction={decorate, decoration={markings, mark=between positions 2pt and 1-2pt step 7pt with {\draw[yellow!50!black,thin,fill=yellow!50] (0:0.09) -- (72:0.09) -- (144:0.09) -- (216:0.09) -- (288:0.09) -- (0:0.09);}}}}}

        \tikzset{edge_red_and_blue/.style={postaction={decorate, decoration={markings, mark=between positions 3pt and 1-2pt step 11pt with {\draw[red!50!black,thin,fill=red!50] (30:0.08) -- (150:0.08) -- (210:0.08) -- (330:0.08) -- (30:0.08);}}}, postaction={decorate, decoration={markings, mark=between positions 8.5pt and 1-2pt step 11pt with {\draw[blue!50!black,thin,fill=blue!50] (0:0.08) -- (120:0.08) -- (240:0.08) -- (0:0.08);}}}}}

        \tikzset{edge_red_and_green/.style={postaction={decorate, decoration={markings, mark=between positions 3pt and 1-2pt step 14.5pt with {\draw[red!50!black,thin,fill=red!50] (30:0.08) -- (150:0.08) -- (210:0.08) -- (330:0.08) -- (30:0.08);}}}, postaction={decorate, decoration={markings, mark=between positions 9.5pt and 1-8pt step 14.5pt with {\draw[green!50!black,thin,fill=green!50] (0:0.16) -- (90:0.08) -- (180:0.08) -- (270:0.08) -- (0:0.16);}}}}}

        \tikzset{edge_blue_and_green/.style={postaction={decorate, decoration={markings, mark=between positions 2pt and 1-2pt step 14pt with {\draw[blue!50!black,thin,fill=blue!50] (0:0.08) -- (120:0.08) -- (240:0.08) -- (0:0.08);}}}, postaction={decorate, decoration={markings, mark=between positions 8pt and 1-2pt step 14pt with {\draw[green!50!black,thin,fill=green!50] (0:0.16) -- (90:0.08) -- (180:0.08) -- (270:0.08) -- (0:0.16);}}}}}

        \begin{scope}[shift={(-8.5, 0)}]
        \node[vertex, black, label={left:$s$}] (B) at (-6, 0) {};
        \node[vertex, black, label={below:$w_1$}] (C1) at (-3, 3) {};
        \node[vertex, black, label={below:$w_2$}] (C2) at (-3, 1) {};
        \node[vertex, black, label={below:$w_3$}] (C3) at (-3, -1) {};
        \node[vertex, black, label={above:$w_4$}] (C4) at (-3, -3) {};
        \node[vertex, black, label={above:$u_1$}] (u1) at (0, 4) {};
        \node[vertex, black, label={above:$v_1$}] (v1) at (2, 4) {};
        \node[vertex, black, label={above:$u_2$}] (u2) at (0, 2) {};
        \node[vertex, black, label={above:$v_2$}] (v2) at (2, 2) {};
        \node[vertex, black, label={below:$u_3$}] (u3) at (0, 0) {};
        \node[vertex, black, label={below:$v_3$}] (v3) at (2, 0) {};
        \node[vertex, black, label={above:$u_4$}] (u4) at (0, -2) {};
        \node[vertex, black, label={above:$v_4$}] (v4) at (2, -2) {};
        \node[vertex, black, label={below:$u_5$}] (u5) at (0, -4) {};
        \node[vertex, black, label={below:$v_5$}] (v5) at (2, -4) {};
        \node[vertex, black, label={right:$s'$}] (Bprime) at (5.5, 0) {};

        \draw[very thick] (u1) to (v1);
        \draw[very thick] (v2) to (u2);
        \draw[very thick] (u3) to (v3);
        \draw[very thick] (u4) to (v4);
        \draw[very thick] (v5) to (u5);

        \path[edge_blue] (B) to[bend right=-30] (C2);
        \path[edge_blue] (C2) to[bend right=5] (B);
        \path[edge_green] (B) to[bend right=5] (C3);
        \path[edge_green] (C3) to[bend right=-30] (B);
        \path[edge_yellow] (B) to[bend right=30] (C4);
        \path[edge_yellow] (C4) to[bend right=-60] (B);
        
        \draw[very thick] (C1) to[bend right=-37] (u1);
        \draw[very thick] (u1) to[bend right=15] (C1);
        \path[edge_blue_and_green] (C1) to[bend right=-30] (u2);
        \path[edge_blue_and_green] (u2) to[bend right=5] (C1);
        
        \draw[very thick] (C2) to[bend right=-10] (u3);
        \draw[very thick] (u3) to[bend right=-10] (C2);
        
        \path[edge_yellow] (C3) to[bend right=-12] (u4);
        \path[edge_yellow] (u4) to[bend right=-12] (C3);

        \path[edge_green] (C4) to[bend right=5] (u4);
        \path[edge_green] (u4) to[bend right=-30] (C4);
        \draw[very thick] (C4) to[bend right=15] (u5);
        \draw[very thick] (u5) to[bend right=-37] (C4);
        
        \draw[very thick] (v1) to[bend right=-60] (Bprime);
        \draw[very thick] (Bprime) to[bend right=30] (v1);
        \draw[very thick] (v2) to[bend right=-30] (Bprime);
        \draw[very thick] (Bprime) to[bend right=5] (v2);
        \draw[very thick] (v3) to[bend right=-15] (Bprime);
        \draw[very thick] (Bprime) to[bend right=-15] (v3);
        \draw[very thick] (v4) to[bend right=5] (Bprime);
        \draw[very thick] (Bprime) to[bend right=-30] (v4);
        \draw[very thick] (v5) to[bend right=30] (Bprime);
        \draw[very thick] (Bprime) to[bend right=-60] (v5);
        \end{scope}

        \begin{scope}[shift={(8.5, 0)}]
        \node[vertex, black, label={left:$s$}] (B) at (-6, 0) {};
        \node[vertex, black, label={below:$w_1$}] (C1) at (-3, 3) {};
        \node[vertex, black, label={below:$w_2$}] (C2) at (-3, 1) {};
        \node[vertex, black, label={below:$w_3$}] (C3) at (-3, -1) {};
        \node[vertex, black, label={above:$w_4$}] (C4) at (-3, -3) {};
        \node[vertex, black, label={above:$u_1$}] (u1) at (0, 4) {};
        \node[vertex, black, label={above:$v_1$}] (v1) at (2, 4) {};
        \node[vertex, black, label={above:$u_2$}] (u2) at (0, 2) {};
        \node[vertex, black, label={above:$v_2$}] (v2) at (2, 2) {};
        \node[vertex, black, label={below:$u_3$}] (u3) at (0, 0) {};
        \node[vertex, black, label={below:$v_3$}] (v3) at (2, 0) {};
        \node[vertex, black, label={above:$u_4$}] (u4) at (0, -2) {};
        \node[vertex, black, label={above:$v_4$}] (v4) at (2, -2) {};
        \node[vertex, black, label={below:$u_5$}] (u5) at (0, -4) {};
        \node[vertex, black, label={below:$v_5$}] (v5) at (2, -4) {};
        \node[vertex, black, label={right:$s'$}] (Bprime) at (5.5, 0) {};

        \draw[very thick] (u1) to (v1);
        \draw[very thick] (v2) to (u2);
        \draw[very thick] (u3) to (v3);
        \draw[very thick] (u4) to (v4);
        \draw[very thick] (v5) to (u5);

        \path[edge_red] (B) to[bend right=-60] (C1);
        \path[edge_red] (C1) to[bend right=30] (B);
        \path[edge_green] (B) to[bend right=5] (C3);
        \path[edge_green] (C3) to[bend right=-30] (B);
        \path[edge_yellow] (B) to[bend right=30] (C4);
        \path[edge_yellow] (C4) to[bend right=-60] (B);
        
        \draw[very thick] (C1) to[bend right=-37] (u1);
        \draw[very thick] (u1) to[bend right=15] (C1);
        
        \path[edge_red_and_green] (C2) to[bend right=-12] (u2);
        \path[edge_red_and_green] (u2) to[bend right=-12] (C2);
        \draw[very thick] (C2) to[bend right=-10] (u3);
        \draw[very thick] (u3) to[bend right=-10] (C2);
        
        \path[edge_yellow] (C3) to[bend right=-12] (u4);
        \path[edge_yellow] (u4) to[bend right=-12] (C3);

        \path[edge_green] (C4) to[bend right=5] (u4);
        \path[edge_green] (u4) to[bend right=-30] (C4);
        \draw[very thick] (C4) to[bend right=15] (u5);
        \draw[very thick] (u5) to[bend right=-37] (C4);
        
        \draw[very thick] (v1) to[bend right=-60] (Bprime);
        \draw[very thick] (Bprime) to[bend right=30] (v1);
        \draw[very thick] (v2) to[bend right=-30] (Bprime);
        \draw[very thick] (Bprime) to[bend right=5] (v2);
        \draw[very thick] (v3) to[bend right=-15] (Bprime);
        \draw[very thick] (Bprime) to[bend right=-15] (v3);
        \draw[very thick] (v4) to[bend right=5] (Bprime);
        \draw[very thick] (Bprime) to[bend right=-30] (v4);
        \draw[very thick] (v5) to[bend right=30] (Bprime);
        \draw[very thick] (Bprime) to[bend right=-60] (v5);
        \end{scope}

        \begin{scope}[shift={(-8.5, -11)}]
        \node[vertex, black, label={left:$s$}] (B) at (-6, 0) {};
        \node[vertex, black, label={below:$w_1$}] (C1) at (-3, 3) {};
        \node[vertex, black, label={below:$w_2$}] (C2) at (-3, 1) {};
        \node[vertex, black, label={below:$w_3$}] (C3) at (-3, -1) {};
        \node[vertex, black, label={above:$w_4$}] (C4) at (-3, -3) {};
        \node[vertex, black, label={above:$u_1$}] (u1) at (0, 4) {};
        \node[vertex, black, label={above:$v_1$}] (v1) at (2, 4) {};
        \node[vertex, black, label={above:$u_2$}] (u2) at (0, 2) {};
        \node[vertex, black, label={above:$v_2$}] (v2) at (2, 2) {};
        \node[vertex, black, label={below:$u_3$}] (u3) at (0, 0) {};
        \node[vertex, black, label={below:$v_3$}] (v3) at (2, 0) {};
        \node[vertex, black, label={above:$u_4$}] (u4) at (0, -2) {};
        \node[vertex, black, label={above:$v_4$}] (v4) at (2, -2) {};
        \node[vertex, black, label={below:$u_5$}] (u5) at (0, -4) {};
        \node[vertex, black, label={below:$v_5$}] (v5) at (2, -4) {};
        \node[vertex, black, label={right:$s'$}] (Bprime) at (5.5, 0) {};

        \draw[very thick] (u1) to (v1);
        \draw[very thick] (v2) to (u2);
        \draw[very thick] (u3) to (v3);
        \draw[very thick] (u4) to (v4);
        \draw[very thick] (v5) to (u5);

        \path[edge_red] (B) to[bend right=-60] (C1);
        \path[edge_red] (C1) to[bend right=30] (B);
        \path[edge_blue] (B) to[bend right=-30] (C2);
        \path[edge_blue] (C2) to[bend right=5] (B);
        \path[edge_yellow] (B) to[bend right=30] (C4);
        \path[edge_yellow] (C4) to[bend right=-60] (B);
        
        \draw[very thick] (C1) to[bend right=-37] (u1);
        \draw[very thick] (u1) to[bend right=15] (C1);
        
        \draw[very thick] (C2) to[bend right=-10] (u3);
        \draw[very thick] (u3) to[bend right=-10] (C2);
        
        \path[edge_red_and_blue] (C3) to[bend right=10] (u2);
        \path[edge_red_and_blue] (u2) to[bend right=-30] (C3);
        \path[edge_yellow] (C3) to[bend right=-12] (u4);
        \path[edge_yellow] (u4) to[bend right=-12] (C3);

        \draw[very thick] (C4) to[bend right=15] (u5);
        \draw[very thick] (u5) to[bend right=-37] (C4);
        
        \draw[very thick] (v1) to[bend right=-60] (Bprime);
        \draw[very thick] (Bprime) to[bend right=30] (v1);
        \draw[very thick] (v2) to[bend right=-30] (Bprime);
        \draw[very thick] (Bprime) to[bend right=5] (v2);
        \draw[very thick] (v3) to[bend right=-15] (Bprime);
        \draw[very thick] (Bprime) to[bend right=-15] (v3);
        \draw[very thick] (v4) to[bend right=5] (Bprime);
        \draw[very thick] (Bprime) to[bend right=-30] (v4);
        \draw[very thick] (v5) to[bend right=30] (Bprime);
        \draw[very thick] (Bprime) to[bend right=-60] (v5);
        \end{scope}

        \begin{scope}[shift={(8.5, -11)}]
        \node[vertex, black, label={left:$s$}] (B) at (-6, 0) {};
        \node[vertex, black, label={below:$w_1$}] (C1) at (-3, 3) {};
        \node[vertex, black, label={below:$w_2$}] (C2) at (-3, 1) {};
        \node[vertex, black, label={below:$w_3$}] (C3) at (-3, -1) {};
        \node[vertex, black, label={above:$w_4$}] (C4) at (-3, -3) {};
        \node[vertex, black, label={above:$u_1$}] (u1) at (0, 4) {};
        \node[vertex, black, label={above:$v_1$}] (v1) at (2, 4) {};
        \node[vertex, black, label={above:$u_2$}] (u2) at (0, 2) {};
        \node[vertex, black, label={above:$v_2$}] (v2) at (2, 2) {};
        \node[vertex, black, label={below:$u_3$}] (u3) at (0, 0) {};
        \node[vertex, black, label={below:$v_3$}] (v3) at (2, 0) {};
        \node[vertex, black, label={above:$u_4$}] (u4) at (0, -2) {};
        \node[vertex, black, label={above:$v_4$}] (v4) at (2, -2) {};
        \node[vertex, black, label={below:$u_5$}] (u5) at (0, -4) {};
        \node[vertex, black, label={below:$v_5$}] (v5) at (2, -4) {};
        \node[vertex, black, label={right:$s'$}] (Bprime) at (5.5, 0) {};

        \draw[very thick] (u1) to (v1);
        \draw[very thick] (v2) to (u2);
        \draw[very thick] (u3) to (v3);
        \draw[very thick] (u4) to (v4);
        \draw[very thick] (v5) to (u5);

        \path[edge_red] (B) to[bend right=-60] (C1);
        \path[edge_red] (C1) to[bend right=30] (B);
        \path[edge_blue] (B) to[bend right=-30] (C2);
        \path[edge_blue] (C2) to[bend right=5] (B);
        \path[edge_green] (B) to[bend right=5] (C3);
        \path[edge_green] (C3) to[bend right=-30] (B);
        
        \draw[very thick] (C1) to[bend right=-37] (u1);
        \draw[very thick] (u1) to[bend right=15] (C1);
        \path[edge_blue_and_green] (C1) to[bend right=-30] (u2);
        \path[edge_blue_and_green] (u2) to[bend right=5] (C1);
        
        \path[edge_red_and_green] (C2) to[bend right=-12] (u2);
        \path[edge_red_and_green] (u2) to[bend right=-12] (C2);
        \draw[very thick] (C2) to[bend right=-10] (u3);
        \draw[very thick] (u3) to[bend right=-10] (C2);
        
        \path[edge_red_and_blue] (C3) to[bend right=10] (u2);
        \path[edge_red_and_blue] (u2) to[bend right=-30] (C3);

        \path[edge_green] (C4) to[bend right=5] (u4);
        \path[edge_green] (u4) to[bend right=-30] (C4);
        \draw[very thick] (C4) to[bend right=15] (u5);
        \draw[very thick] (u5) to[bend right=-37] (C4);
        
        \draw[very thick] (v1) to[bend right=-60] (Bprime);
        \draw[very thick] (Bprime) to[bend right=30] (v1);
        \draw[very thick] (v2) to[bend right=-30] (Bprime);
        \draw[very thick] (Bprime) to[bend right=5] (v2);
        \draw[very thick] (v3) to[bend right=-15] (Bprime);
        \draw[very thick] (Bprime) to[bend right=-15] (v3);
        \draw[very thick] (v4) to[bend right=5] (Bprime);
        \draw[very thick] (Bprime) to[bend right=-30] (v4);
        \draw[very thick] (v5) to[bend right=30] (Bprime);
        \draw[very thick] (Bprime) to[bend right=-60] (v5);
        \end{scope}
    \end{scope}
 \end{tikzpicture}
 \caption{An example for the constructions used in the proof of Theorem~\ref{thm:ECA_orientability_strong}. In the first row, we can see the graph~$G$ constructed from the Boolean formula $\varphi = (x_1 \lor x_2) \land (x_2 \lor x_3) \land (x_2 \lor x_4) \land (x_4 \lor x_5)$. In the second and third rows, there are the graphs obtained from $G$ by deleting the first, the second, the third, and the fourth color, respectively. The colors red, blue, green, and yellow are denoted by (red) rectangles, (blue) triangles, (green) deltoids, and (yellow) pentagons respectively. The arcs marked with alternating forms are assigned multiple colors. The arcs drawn with a single line are assigned no colors.}
 \label{fig:ECA_orientability_1}
\end{figure}

 Now we show that $G$ admits an arc-1-color-avoiding strongly 1-connected orientation if and only if $\varphi$ is satisfiable in such a way that for each clause, not all literals have the same truth value.

 \medskip
 
 Assume first that $\varphi$ is satisfiable in such a way that for each clause, not all literals have the same truth value. 
 For each $i \in [n]$, let us orient the edge $u_i v_i$ from $u_i$ to $v_i$ if $x_i = \texttt{true}$, and let us orient this edge from $v_i$ to $u_i$ if $x_i = \texttt{false}$. Finally, orient all the parallel edges in opposite directions.
 
 Now we prove that with this orientation, $G$ is arc-1-color-avoiding strongly 1-connected. Note that it is enough to show that for any $j \in [m]$, after removing the color $j$, there still exists a dipath from $s$ to any other vertex and a dipath from any vertex to $s$. Let $j \in [m]$ be an arbitrary color, and consider the digraph obtained after removing the color $j$.

 Since there are at least 11 clauses and every variable is contained in at most 4 clauses, there exists $j' \in [m] \setminus \{ j \}$ such that the clauses $C_{j'}$ and $C_j$ do not share any variables. Let $i'_1, i'_2 \in [n]$ be indices such that $x_{i'_1}$ and $x_{i'_2}$ are a true and a false variable, respectively, in the clause $C_{j'}$. Then $s w_{j'} u_{i'_1} v_{i'_1} s'$ and $s' v_{i'_2} u_{i'_2} w_{j'} s$ are $s$-$s'$ and $s'$-$s$ dipath in the remaining digraph.
 
 Clearly, for any $j'' \in [m] \setminus \{ j \}$, the arcs $sw_{j''}$ and $w_{j''}s$ ensure the existence of dipaths between $s$ and $w_{j''}$ in both directions in the remaining digraph.
 
 Let $i_1, i_2 \in [n]$ be indices such that $x_{i_1}$ and $x_{i_2}$ are a true and a false variable, respectively, in the clause $C_j$. Then $w_j u_{i_1} v_{i_1} s'$ and $s' v_{i_2} u_{i_2} w_j$ are $w_j$-$s'$- and $s'$-$w_j$ dipath, respectively, which can be extended with an $s'$-$s$ or an $s$-$s'$ dipath to a $w_j$-$s$ and a $s$-$w_j$ dipath in the remaining digraph.
 
 Let $i \in [n]$ be arbitrary and let $j''' \in [m]$ be such an index for which the variable $x_i$ is contained in the clause $C_{j'''}$. If $x_i$ is not contained in $C_j$, then the arcs $w_{j'''} u_i$ and $u_i w_{j'''}$ can be extended with an $s$-$w_{j'''}$ or an $w_{j'''}$-$s$ dipath to an $s$-$u_i$ and an $u_i$-$s$ dipath in the remaining graph. If $x_i$ is contained in $C_j$, then the arcs $w_j u_i$ and $u_i w_j$ can be extended with an $s$-$w_j$ or an $w_j$-$s$ dipath to an $s$-$u_i$ and an $u_i$-$s$ dipath in the remaining graph.

 For any $i \in [n]$, the arcs $v_i s'$ and $s' v_i$ can be extended with an $s'$-$s$ or an $s$-$s'$ dipath to a $v_i$-$s$ and a $s$-$v_i$ dipath in the remaining digraph.
 
 Thus, with the given orientation, $G$ is indeed arc-1-color-avoiding strongly 1-connected. For an example, see Figure~\ref{fig:ECA_orientability_1_b}.

 \begin{figure}[H]
 \centering
 \begin{tikzpicture}
    \begin{scope}[shift={(0,0)}]
        \tikzstyle{vertex}=[draw,circle,fill,minimum size=10,inner sep=0]
        \tikzset{paint/.style={draw=#1!50!black, fill=#1!50}, decorate with/.style = {decorate, decoration={shape backgrounds, shape=#1, shape size = 5pt, shape sep = 6pt}}}

        \tikzset{edge_red/.style={postaction={decorate, decoration={markings, mark=between positions 3pt and 1-8pt step 7pt with {\draw[red!50!black,thin,fill=red!50] (30:0.08) -- (150:0.08) -- (210:0.08) -- (330:0.08) -- (30:0.08);}}}}}

        \tikzset{edge_blue/.style={postaction={decorate, decoration={markings, mark=between positions 5pt and 1-8pt step 6pt with {\draw[blue!50!black,thin,fill=blue!50] (0:0.08) -- (120:0.08) -- (240:0.08) -- (0:0.08);}}}}}
 
        \tikzset{edge_green/.style={postaction={decorate, decoration={markings, mark=between positions 5pt and 1-8pt step 8.5pt with {\draw[green!50!black,thin,fill=green!50] (0:0.16) -- (90:0.08) -- (180:0.08) -- (270:0.08) -- (0:0.16);}}}}}

        \tikzset{edge_yellow/.style={postaction={decorate, decoration={markings, mark=between positions 2pt and 1-8pt step 7pt with {\draw[yellow!50!black,thin,fill=yellow!50] (0:0.09) -- (72:0.09) -- (144:0.09) -- (216:0.09) -- (288:0.09) -- (0:0.09);}}}}}

        \tikzset{edge_red_and_blue/.style={postaction={decorate, decoration={markings, mark=between positions 3pt and 1-8pt step 11pt with {\draw[red!50!black,thin,fill=red!50] (30:0.08) -- (150:0.08) -- (210:0.08) -- (330:0.08) -- (30:0.08);}}}, postaction={decorate, decoration={markings, mark=between positions 8.5pt and 1-2pt step 11pt with {\draw[blue!50!black,thin,fill=blue!50] (0:0.08) -- (120:0.08) -- (240:0.08) -- (0:0.08);}}}}}

        \tikzset{edge_red_and_green/.style={postaction={decorate, decoration={markings, mark=between positions 3pt and 1-8pt step 14.5pt with {\draw[red!50!black,thin,fill=red!50] (30:0.08) -- (150:0.08) -- (210:0.08) -- (330:0.08) -- (30:0.08);}}}, postaction={decorate, decoration={markings, mark=between positions 9.5pt and 1-8pt step 14.5pt with {\draw[green!50!black,thin,fill=green!50] (0:0.16) -- (90:0.08) -- (180:0.08) -- (270:0.08) -- (0:0.16);}}}}}

        \tikzset{edge_blue_and_green/.style={postaction={decorate, decoration={markings, mark=between positions 2pt and 1-8pt step 14pt with {\draw[blue!50!black,thin,fill=blue!50] (0:0.08) -- (120:0.08) -- (240:0.08) -- (0:0.08);}}}, postaction={decorate, decoration={markings, mark=between positions 8pt and 1-8pt step 14pt with {\draw[green!50!black,thin,fill=green!50] (0:0.16) -- (90:0.08) -- (180:0.08) -- (270:0.08) -- (0:0.16);}}}}}
        
        \node[vertex, black, label={left:$s$}] (B) at (-6, 0) {};
        \node[vertex, black, label={below:$w_1$}] (C1) at (-3, 3) {};
        \node[vertex, black, label={below:$w_2$}] (C2) at (-3, 1) {};
        \node[vertex, black, label={below:$w_3$}] (C3) at (-3, -1) {};
        \node[vertex, black, label={above:$w_4$}] (C4) at (-3, -3) {};
        \node[vertex, black, label={above:$u_1$}] (u1) at (0, 4) {};
        \node[vertex, black, label={above:$v_1$}] (v1) at (2, 4) {};
        \node[vertex, black, label={above:$u_2$}] (u2) at (0, 2) {};
        \node[vertex, black, label={above:$v_2$}] (v2) at (2, 2) {};
        \node[vertex, black, label={below:$u_3$}] (u3) at (0, 0) {};
        \node[vertex, black, label={below:$v_3$}] (v3) at (2, 0) {};
        \node[vertex, black, label={above:$u_4$}] (u4) at (0, -2) {};
        \node[vertex, black, label={above:$v_4$}] (v4) at (2, -2) {};
        \node[vertex, black, label={below:$u_5$}] (u5) at (0, -4) {};
        \node[vertex, black, label={below:$v_5$}] (v5) at (2, -4) {};
        \node[vertex, black, label={right:$s'$}] (Bprime) at (5.5, 0) {};

        \draw[very thick,->,>={LaTeX[black,length=10pt, width=10pt]}] (u1) to (v1);
        \draw[very thick,->,>={LaTeX[black,length=10pt, width=10pt]}] (v2) to (u2);
        \draw[very thick,->,>={LaTeX[black,length=10pt, width=10pt]}] (u3) to (v3);
        \draw[very thick,->,>={LaTeX[black,length=10pt, width=10pt]}] (u4) to (v4);
        \draw[very thick,->,>={LaTeX[black,length=10pt, width=10pt]}] (v5) to (u5);

        \path[edge_red] (B) to[bend right=-60] (C1);
        \draw[->,>={LaTeX[black,length=10pt, width=10pt]},draw opacity=0] (B) to[bend right=-60] (C1);
        \path[edge_red] (C1) to[bend right=30] (B);
        \draw[->,>={LaTeX[black,length=10pt, width=10pt]},draw opacity=0] (C1) to[bend right=30] (B);
        \path[edge_blue] (B) to[bend right=-30] (C2);
        \draw[->,>={LaTeX[black,length=10pt, width=10pt]},draw opacity=0] (B) to[bend right=-30] (C2);
        \path[edge_blue] (C2) to[bend right=5] (B);
        \draw[->,>={LaTeX[black,length=10pt, width=10pt]},draw opacity=0] (C2) to[bend right=5] (B);
        \path[edge_green] (B) to[bend right=5] (C3);
        \draw[->,>={LaTeX[black,length=10pt, width=10pt]},draw opacity=0] (B) to[bend right=5] (C3);
        \path[edge_green] (C3) to[bend right=-30] (B);
        \draw[->,>={LaTeX[black,length=10pt, width=10pt]},draw opacity=0] (C3) to[bend right=-30] (B);
        \path[edge_yellow] (B) to[bend right=30] (C4);
        \draw[->,>={LaTeX[black,length=10pt, width=10pt]},draw opacity=0] (B) to[bend right=30] (C4);
        \path[edge_yellow] (C4) to[bend right=-60] (B);
        \draw[->,>={LaTeX[black,length=10pt, width=10pt]},draw opacity=0] (C4) to[bend right=-60] (B);
        
        \draw[very thick,->,>={LaTeX[black,length=10pt, width=10pt]}] (C1) to[bend right=-37] (u1);
        \draw[very thick,->,>={LaTeX[black,length=10pt, width=10pt]}] (u1) to[bend right=15] (C1);
        \path[edge_blue_and_green] (C1) to[bend right=-30] (u2);
        \draw[->,>={LaTeX[black,length=10pt, width=10pt]},draw opacity=0] (C1) to[bend right=-30] (u2);
        \path[edge_blue_and_green] (u2) to[bend right=5] (C1);
        \draw[->,>={LaTeX[black,length=10pt, width=10pt]},draw opacity=0] (u2) to[bend right=5] (C1);
        
        \path[edge_red_and_green] (C2) to[bend right=-12] (u2);
        \draw[->,>={LaTeX[black,length=10pt, width=10pt]},draw opacity=0] (C2) to[bend right=-12] (u2);
        \path[edge_red_and_green] (u2) to[bend right=-12] (C2);
        \draw[->,>={LaTeX[black,length=10pt, width=10pt]},draw opacity=0] (u2) to[bend right=-12] (C2);
        \draw[very thick,->,>={LaTeX[black,length=10pt, width=10pt]}] (C2) to[bend right=-10] (u3);
        \draw[very thick,->,>={LaTeX[black,length=10pt, width=10pt]}] (u3) to[bend right=-10] (C2);
        
        \path[edge_red_and_blue] (C3) to[bend right=10] (u2);
        \draw[->,>={LaTeX[black,length=10pt, width=10pt]},draw opacity=0] (C3) to[bend right=10] (u2);
        \path[edge_red_and_blue] (u2) to[bend right=-30] (C3);
        \draw[->,>={LaTeX[black,length=10pt, width=10pt]},draw opacity=0] (u2) to[bend right=-30] (C3);
        \path[edge_yellow] (C3) to[bend right=-12] (u4);
        \draw[->,>={LaTeX[black,length=10pt, width=10pt]},draw opacity=0] (C3) to[bend right=-12] (u4);
        \path[edge_yellow] (u4) to[bend right=-12] (C3);
        \draw[->,>={LaTeX[black,length=10pt, width=10pt]},draw opacity=0] (u4) to[bend right=-12] (C3);

        \path[edge_green] (C4) to[bend right=5] (u4);
        \draw[->,>={LaTeX[black,length=10pt, width=10pt]},draw opacity=0] (C4) to[bend right=5] (u4);
        \path[edge_green] (u4) to[bend right=-30] (C4);
        \draw[->,>={LaTeX[black,length=10pt, width=10pt]},draw opacity=0] (u4) to[bend right=-30] (C4);
        \draw[very thick,->,>={LaTeX[black,length=10pt, width=10pt]}] (C4) to[bend right=15] (u5);
        \draw[very thick,->,>={LaTeX[black,length=10pt, width=10pt]}] (u5) to[bend right=-37] (C4);
        
        \draw[very thick,->,>={LaTeX[black,length=10pt, width=10pt]}] (v1) to[bend right=-60] (Bprime);
        \draw[very thick,->,>={LaTeX[black,length=10pt, width=10pt]}] (Bprime) to[bend right=30] (v1);
        \draw[very thick,->,>={LaTeX[black,length=10pt, width=10pt]}] (v2) to[bend right=-30] (Bprime);
        \draw[very thick,->,>={LaTeX[black,length=10pt, width=10pt]}] (Bprime) to[bend right=5] (v2);
        \draw[very thick,->,>={LaTeX[black,length=10pt, width=10pt]}] (v3) to[bend right=-15] (Bprime);
        \draw[very thick,->,>={LaTeX[black,length=10pt, width=10pt]}] (Bprime) to[bend right=-15] (v3);
        \draw[very thick,->,>={LaTeX[black,length=10pt, width=10pt]}] (v4) to[bend right=5] (Bprime);
        \draw[very thick,->,>={LaTeX[black,length=10pt, width=10pt]}] (Bprime) to[bend right=-30] (v4);
        \draw[very thick,->,>={LaTeX[black,length=10pt, width=10pt]}] (v5) to[bend right=30] (Bprime);
        \draw[very thick,->,>={LaTeX[black,length=10pt, width=10pt]}] (Bprime) to[bend right=-60] (v5);
    \end{scope}
 \end{tikzpicture}
 \caption{Given the edge-colored graph in Figure~\ref{fig:ECA_orientability_1}, here we can see its orientation corresponding to the assignment $x_1 = x_3 = x_4 = \texttt{true}$ and $x_2 = x_5 = \texttt{false}$.}
 \label{fig:ECA_orientability_1_b}
 \end{figure}

 Now assume that $G$ admits an arc-1-color-avoiding strongly 1-connected orientation, and let us consider such an orientation. For any $i \in [n]$, let $x_i = \texttt{true}$ if the edge $u_i v_i$ is oriented from $u_i$ to $v_i$, and let $x_i = \texttt{false}$ if the edge $u_i v_i$ is oriented from $v_i$ to $u_i$.
 
 First, we show that this truth assignment satisfy $\varphi$. Suppose to the contrary that there exists $j \in [m]$ such that the clause $C_j$ is not satisfied under the constructed truth assignment. Let $B_j$ denote the set of indices of the variables that are contained in the clause $C_j$. Then for each $i \in B_j$, we have $x_i = \texttt{false}$, thus the edge $u_i v_i$ is oriented from $v_i$ to $u_i$. Therefore, after the removal of the color $j$ from the digraph, the arc set $\{ v_i u_i \mid i \in B_j \}$ form a directed cut, contradicting that the given orientation is an arc-1-color-avoiding strongly 1-connected orientation.

 Now we show that for each clause, not all literals have the same truth value. Suppose to the contrary that there exists $j \in [m]$ such that all the variables in the clause $C_j$ have the same truth value. Let $B_j$ denote the set of indices of the variables that are contained in the clause $C_j$. Since $\varphi$ is positive and is satisfied under the constructed truth assignment, this is only possible if $x_i = \texttt{true}$ holds for all $i \in B_j$, that is, the edge $u_i v_i$ is oriented from $u_i$ to $v_i$ for all $i \in B_j$. Therefore, after the removal of the color $j$ from the digraph, the arc set $\{ u_i v_i \mid i \in B_j \}$ form a directed cut, contradicting that the given orientation is an arc-1-color-avoiding strongly 1-connected orientation.
 
 Therefore, $\varphi$ can be satisfied in such a way that for each clause, not all literals have the same truth value.

 \medskip
 
 To assign precisely one color to each edge, we do the following modifications. Let us replace each pair of parallel edges whose color list has length $L \ge 2$ with a path of $L$ pairs of parallel edges, where each pair is assigned exactly one of the colors from the original color list. Let us replace each edge whose color list is empty with another edge of color $m+1$. Add a new vertex $s''$ to the graph and connect it to $s$ with two parallel edges of color $m+1$, and also to each vertex of the original graph $G$ with a path of $m$ parallel edges, where each pair is assigned exactly one of the colors from $[m]$. It is not difficult to show that this modified graph admits an arc-1-color-avoiding strongly 1-connected coloring if and only if $\varphi$ can be satisfied in such a way that for each clause, not all literals have the same truth value.
\end{proof}

Note that if the edges of the graph are colored with only two colors, say red and blue, then we can decide in polynomial time whether $G$ has an arc-$\ell$-color-avoiding strongly $k$-arc-connected orientation. Such an orientation exists if and only if both the subgraphs formed by the red edges and by the blue edges admit a strongly $k$-arc-connected orientation. By the classical result of Nash-Williams~\cite{article:strongly_k-arc-connected_orientations}, this can be decided in polynomial time, and such an orientation --- if exists --- can also be found in polynomial time. Determining the complexity of the same problem when the graph is colored with a constant number of colors~---~in particular, with three~---~remains open.

As for the arc-$\ell$-color-avoiding strongly $k$-vertex-connected variant, the situation is somewhat different. If the edges of~$G$ are colored with two colors and we seek an arc-$\ell$-color-avoiding strongly $k$-vertex-connected orientation, then the problem is polynomial-time solvable for~$k = 1$ by the above. For~$k = 2$, the decision version is also solvable in polynomial time by the theorem of Thomassen~\cite{article:thomassen}, but the complexity of finding an actual orientation remains open. In contrast, for~$k \ge 3$, even the decision version of this problem becomes NP-hard, as shown by Durand de Gevigney~\cite{article:durand_de_gevigney}.

Now let us consider the case when we want to find an arc-color-avoiding rooted connected orientation.

\begin{theorem} \label{thm:ECA_orientability_rooted}
 Given an edge-colored graph $G$ and numbers $k, \ell \in \mathbb{Z}_+$, deciding whether $G$ has an arc-$\ell$-color-avoiding rooted $k$-arc- or $k$-vertex-connected orientation is NP-complete. This problem remains NP-complete even if $k = \ell = 1$.
\end{theorem}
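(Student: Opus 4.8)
The plan is to mirror the proof of Theorem~\ref{thm:ECA_orientability_strong}. Membership in NP is immediate in the same way as there, so the work is in the hardness reduction, which I would again take from \textsc{Positive-Linear-NAE-3SAT-Exact4}, first using lists of colors on the edges and then applying the very same single-color-per-edge transformation (replacing each multi-color parallel pair by a path of single-colored parallel pairs and adjoining an auxiliary high-connectivity vertex). As before I would encode a truth assignment by the orientation of a designated \emph{variable edge} $u_iv_i$ (oriented $u_i\to v_i$ for $x_i=\texttt{true}$ and $v_i\to u_i$ for $\texttt{false}$), index the color set by the clauses, and arrange that deleting color $j$ keeps the clause-$C_j$ region connected through the edges colored $A_i\setminus\{j\}$. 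I would designate a root $r$; if the root is not part of the input, it can be pinned down by a standard forcing gadget making a single vertex the only admissible root.

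The essential new difficulty, and the point where the construction must diverge from the strong case, is that rooted reachability is one-directional. In Theorem~\ref{thm:ECA_orientability_strong} an all-false clause produced a directed cut blocking the forward ($s\to s'$) direction while an all-true clause blocked the backward ($s'\to s$) direction, and strong connectivity forbade both. For an $r$-rooted orientation only the direction out of $r$ matters, so a single directed cut catches only one of the two monochromatic cases. To recover the full not-all-equal condition I would attach to each clause $C_j$ a pair of target vertices $t_j^{+}$ and $t_j^{-}$, wired so that, after deleting color $j$, the root $r$ can reach $t_j^{+}$ only by traversing some variable edge of $C_j$ in its true orientation, and can reach $t_j^{-}$ only by traversing some variable edge of $C_j$ in its false orientation. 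Then $r$ reaches both $t_j^{+}$ and $t_j^{-}$ after deleting color $j$ exactly when $C_j$ has at least one true and at least one false variable, i.e.\ when $C_j$ is not-all-equal satisfied.

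I would then prove both directions. For the ``if'' direction, from a not-all-equal satisfying assignment I would orient the variable edges accordingly, orient all parallel pairs in opposite directions, and check directly that for every color $j$ and every vertex there is an $r$-dipath avoiding color $j$; as in the strong case this uses the \textsc{Exact4}/linearity hypotheses ($m\ge 11$ and that two clauses share at most one variable) to route the backbone through a clause $C_{j'}$ disjoint from $C_j$. For the ``only if'' direction I would show that a monochromatic clause $C_j$ makes one of $t_j^{+}$, $t_j^{-}$ unreachable from $r$ after deleting color $j$, since the relevant set of true/false arcs then forms a one-sided directed cut separating that target from $r$. Because at $k=1$ rooted $1$-arc-connectivity and rooted $1$-vertex-connectivity coincide (Section~\ref{section:preliminaries}), the same instance settles the $k$-arc- and $k$-vertex-connected variants at once.

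I expect the main obstacle to be designing the two gating conditions on $t_j^{+}$ and $t_j^{-}$ so that no unintended \emph{leakage} dipath lets $r$ reach these targets independently of the variable orientations, together with the bookkeeping to confirm that a satisfying assignment genuinely yields reachability to \emph{every} vertex after deleting any single color. A secondary, routine obstacle is verifying that the final single-color-per-edge transformation preserves the rooted (rather than strong) connectivity equivalence.
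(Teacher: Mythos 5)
Your plan is essentially the paper's proof: the same reduction from \textsc{Positive-Linear-NAE-3SAT-Exact4}, the same encoding of truth values by the orientation of an uncolored variable edge $u_iv_i$, and, crucially, the same key idea that rooted connectivity being one-directional forces you to install \emph{two} one-sided detectors per clause, one certifying a true literal and one certifying a false literal. The only substantive difference is in how the detectors are gated: you propose to gate both targets $t_j^{+}$ and $t_j^{-}$ with the single color $j$ and flag the resulting ``leakage'' problem as your main unresolved obstacle, whereas the paper avoids it by using $2m$ colors --- a vertex $w_j$ attached to $r$ by an edge of color $j$ and reaching the $u_i$-side through parallel pairs with lists $A_i\setminus\{j\}$, and a mirror vertex $w'_j$ attached by an edge of color $j+m$ and reaching the $v_i$-side through lists $\{k+m \mid k\in A_i\setminus\{j\}\}$. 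With that choice, deleting color $j$ isolates $w_j$ from $r$ except through a false-oriented variable edge of $C_j$, and deleting color $j+m$ isolates $w'_j$ except through a true-oriented one, so no extra anti-leakage argument is needed; the rest of your outline (the ``if'' direction routing, the coincidence of the $k$-arc and $k$-vertex variants at $k=1$, and the final single-color-per-edge transformation) matches the paper's treatment.
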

\begin{proof}
 This problem is obviously in NP, and to show that it is NP-hard even for the case $k = \ell = 1$, we reduce the NP-complete problem \textsc{Positive-Linear-NAE-3SAT-Exact4}~\cite{Darmann2020Simple} to it. Our goal in the problem \textsc{Positive-Linear-NAE-3SAT-Exact4} is to decide whether a Boolean formula in conjunctive normal form where each clause contains 3 literals, each literal is not negated and appears in exactly four distinct clauses, each pair of distinct clauses shares at most one variable can be satisfied in such a way that for each clause, not all literals have the same truth value.
 
 Similarly as in the previous proof, first, we assign a list of colors to each edge, and then show how to modify the construction so that every edge is assigned a single color.

 Let $\varphi$ be an an instance of \textsc{Positive-Linear-NAE-3SAT-Exact4} with $n$ variables $x_1, \ldots, x_n$ and with $m \ge 11$ clauses $C_1, \ldots, C_m$. Let $G$ be defined as follows. Let
 \[ V(G) \colonequals \{ r \} \cup \big\{ u_i, v_i \bigm|i \in [n] \big\} \cup \big\{ w_j, w'_j \bigm| j \in [m] \big\} \text{.} \]
 For each $j \in [m]$, connect the vertices $r$ and $w_j$ with an edge having a list of colors $\{j\}$, and connect the vertices $r$ and $w'_j$ with an edge having a list of colors $\{ j+m \}$. For each $i \in [n]$, connect the vertices $u_i$ and $v_i$ with an edge whose list of colors is $\emptyset$. For each $i \in [n]$, let $A_i$ denote the set of indices of the clauses containing the variable $x_i$. For each $i\in[n]$ and $j \in [m]$, if the variable $x_i$ is contained in the clause $C_j$, then connect $w_j$ and $u_i$ with two parallel edges, both having a list of colors $A_i \setminus \{j\}$, and connect $w'_j$ and $v_i$ with two parallel edges, both having a list of colors $\big\{ k+m \bigm| k \in A_i \setminus \{ j \} \big\}$. Clearly, $G$ can be constructed in polynomial time. For an example, see Figure~\ref{fig:arc_orientation_rooted}. 

 \begin{figure}[H]
 \centering
 \begin{tikzpicture}
  \begin{scope}
    \tikzstyle{vertex}=[draw,circle,fill,minimum size=10,inner sep=0]
    \tikzset{paint/.style={draw=#1!50!black, fill=#1!50}, decorate with/.style = {decorate, decoration={shape backgrounds, shape=#1, shape size = 5pt, shape sep = 6pt}}}

    \tikzset{edge_red/.style={postaction={decorate, decoration={markings, mark=between positions 3pt and 1-3pt step 7pt with {\draw[red!50!black,thin,fill=red!50] (30:0.08) -- (150:0.08) -- (210:0.08) -- (330:0.08) -- (30:0.08);}}}}}
    
    \tikzset{edge_blue/.style={postaction={decorate, decoration={markings, mark=between positions 5pt and 1-3pt step 6pt with {\draw[blue!50!black,thin,fill=blue!50] (0:0.08) -- (120:0.08) -- (240:0.08) -- (0:0.08);}}}}}
 
    \tikzset{edge_green/.style={postaction={decorate, decoration={markings, mark=between positions 5pt and 1-3pt step 8.5pt with {\draw[green!50!black,thin,fill=green!50] (0:0.16) -- (90:0.08) -- (180:0.08) -- (270:0.08) -- (0:0.16);}}}}}

    \tikzset{edge_yellow/.style={postaction={decorate, decoration={markings, mark=between positions 2pt and 1-3pt step 7pt with {\draw[yellow!50!black,thin,fill=yellow!50] (0:0.09) -- (72:0.09) -- (144:0.09) -- (216:0.09) -- (288:0.09) -- (0:0.09);}}}}}

    \node[vertex, label={above:$r$}] (r) at (0, 4) {};
    \node[vertex, label={below:$w_1$}] (C1) at (-4, 1) {}; 
    \node[vertex, label={below:$w'_1$}] (Cprime1) at (4, 1) {};
    \node[vertex, label={above:$w_2$}] (C2) at (-4, -1) {};
    \node[vertex, label={above:$w'_2$}] (Cprime2) at (4, -1) {};
    \node[vertex, label={above:$u_1$}] (u1) at (-1, 2) {};
    \node[vertex, label={above:$v_1$}] (v1) at (1, 2) {};
    \node[vertex, label={above:$u_2$}] (u2) at (-1, 0) {};
    \node[vertex, label={above:$v_2$}] (v2) at (1, 0) {};
    \node[vertex, label={below:$u_3$}] (u3) at (-1, -2) {};
    \node[vertex, label={below:$v_3$}] (v3) at (1, -2) {};

    \path[edge_red] (r) .. controls (-1.75, 3.5) and (-3.75,3) .. (C1);
    \path[edge_blue] (r) .. controls (-5.5, 4) and (-6.5, -0.5) .. (C2);
    \path[edge_green] (r) .. controls (1.75, 3.5) and (3.75,3) .. (Cprime1);
    \path[edge_yellow] (r) .. controls (5.5, 4) and (6.5, 0.5) .. (Cprime2);

    \draw[very thick] (C1) to[bend right=-40] (u1);
    \draw[very thick] (u1) to[bend right=15] (C1);
    
    \path[edge_blue] (C1) to[bend right=-25] (u2);
    \path[edge_blue] (u2) to[bend right=0] (C1);

    \path[edge_red] (C2) to[bend right=0] (u2);
    \path[edge_red] (u2) to[bend right=-25] (C2);
    
    \draw[very thick] (C2) to[bend right=15] (u3);
    \draw[very thick] (u3) to[bend right=-45] (C2);
    
    \draw[very thick] (Cprime1) to[bend right=15] (v1);
    \draw[very thick] (v1) to[bend right=-40] (Cprime1);
    
    \path[edge_yellow] (Cprime1) to[bend right=0] (v2);
    \path[edge_yellow] (v2) to[bend right=-25] (Cprime1);
    
    \path[edge_green] (Cprime2) to[bend right=-25] (v2);
    \path[edge_green] (v2) to[bend right=0] (Cprime2);
    
    \draw[very thick] (Cprime2) to[bend right=-45] (v3);
    \draw[very thick] (v3) to[bend right=15] (Cprime2);
    
    \draw[very thick] (u1) -- (v1);
    \draw[very thick] (v2) -- (u2);
    \draw[very thick] (u3) -- (v3);
  \end{scope}

  \begin{scope}[shift={(0,-8.5)}]
    \tikzstyle{vertex}=[draw,circle,fill,minimum size=10,inner sep=0]
    \tikzset{paint/.style={draw=#1!50!black, fill=#1!50}, decorate with/.style = {decorate, decoration={shape backgrounds, shape=#1, shape size = 5pt, shape sep = 6pt}}}

    \tikzset{edge_red/.style={postaction={decorate, decoration={markings, mark=between positions 3pt and 1-8pt step 7pt with {\draw[red!50!black,thin,fill=red!50] (30:0.08) -- (150:0.08) -- (210:0.08) -- (330:0.08) -- (30:0.08);}}}}}
    
    \tikzset{edge_blue/.style={postaction={decorate, decoration={markings, mark=between positions 5pt and 1-8pt step 6pt with {\draw[blue!50!black,thin,fill=blue!50] (0:0.08) -- (120:0.08) -- (240:0.08) -- (0:0.08);}}}}}
 
    \tikzset{edge_green/.style={postaction={decorate, decoration={markings, mark=between positions 5pt and 1-8pt step 8.5pt with {\draw[green!50!black,thin,fill=green!50] (0:0.16) -- (90:0.08) -- (180:0.08) -- (270:0.08) -- (0:0.16);}}}}}

    \tikzset{edge_yellow/.style={postaction={decorate, decoration={markings, mark=between positions 2pt and 1-8pt step 7pt with {\draw[yellow!50!black,thin,fill=yellow!50] (0:0.09) -- (72:0.09) -- (144:0.09) -- (216:0.09) -- (288:0.09) -- (0:0.09);}}}}}

    \node[vertex, label={above:$r$}] (r) at (0, 4) {};
    \node[vertex, label={below:$w_1$}] (C1) at (-4, 1) {}; 
    \node[vertex, label={below:$w'_1$}] (Cprime1) at (4, 1) {};
    \node[vertex, label={above:$w_2$}] (C2) at (-4, -1) {};
    \node[vertex, label={above:$w'_2$}] (Cprime2) at (4, -1) {};
    \node[vertex, label={above:$u_1$}] (u1) at (-1, 2) {};
    \node[vertex, label={above:$v_1$}] (v1) at (1, 2) {};
    \node[vertex, label={above:$u_2$}] (u2) at (-1, 0) {};
    \node[vertex, label={above:$v_2$}] (v2) at (1, 0) {};
    \node[vertex, label={below:$u_3$}] (u3) at (-1, -2) {};
    \node[vertex, label={below:$v_3$}] (v3) at (1, -2) {};

    \path[edge_red] (r) .. controls (-1.75, 3.5) and (-3.75,3) .. (C1);
    \draw[->,>={LaTeX[black,length=10pt, width=10pt]},draw opacity=0] (r) .. controls (-1.75, 3.5) and (-3.75,3) .. (C1);
    \path[edge_blue] (r) .. controls (-5.5, 4) and (-6.5, -0.5) .. (C2);
    \draw[->,>={LaTeX[black,length=10pt, width=10pt]},draw opacity=0] (r) .. controls (-5, 4) and (-6, -0.5) .. (C2);
    \path[edge_green] (r) .. controls (1.75, 3.5) and (3.75,3) .. (Cprime1);
    \draw[->,>={LaTeX[black,length=10pt, width=10pt]},draw opacity=0] (r) .. controls (1.75, 3.5) and (3.75,3) .. (Cprime1);
    \path[edge_yellow] (r) .. controls (5.5, 4) and (6.5, 0.5) .. (Cprime2);
    \draw[->,>={LaTeX[black,length=10pt, width=10pt]},draw opacity=0] (r) .. controls (5, 4) and (6, 0.5) .. (Cprime2);

    \draw[very thick,->,>={LaTeX[black,length=10pt, width=10pt]}] (C1) to[bend right=-40] (u1);
    \draw[very thick,->,>={LaTeX[black,length=10pt, width=10pt]}] (u1) to[bend right=15] (C1);
    
    \path[edge_blue] (C1) to[bend right=-25] (u2);
    \draw[->,>={LaTeX[black,length=10pt, width=10pt]},draw opacity=0] (C1) to[bend right=-25] (u2);
    \path[edge_blue] (u2) to[bend right=0] (C1);
    \draw[->,>={LaTeX[black,length=10pt, width=10pt]},draw opacity=0] (u2) to[bend right=0] (C1);

    \path[edge_red] (C2) to[bend right=0] (u2);
    \draw[->,>={LaTeX[black,length=10pt, width=10pt]},draw opacity=0] (C2) to[bend right=0] (u2);
    \path[edge_red] (u2) to[bend right=-25] (C2);
    \draw[->,>={LaTeX[black,length=10pt, width=10pt]},draw opacity=0] (u2) to[bend right=-25] (C2);
    
    \draw[very thick,->,>={LaTeX[black,length=10pt, width=10pt]}] (C2) to[bend right=15] (u3);
    \draw[very thick,->,>={LaTeX[black,length=10pt, width=10pt]}] (u3) to[bend right=-45] (C2);
    
    \draw[very thick,->,>={LaTeX[black,length=10pt, width=10pt]}] (Cprime1) to[bend right=15] (v1);
    \draw[very thick,->,>={LaTeX[black,length=10pt, width=10pt]}] (v1) to[bend right=-40] (Cprime1);
    
    \path[edge_yellow] (Cprime1) to[bend right=0] (v2);
    \draw[->,>={LaTeX[black,length=10pt, width=10pt]},draw opacity=0] (Cprime1) to[bend right=0] (v2);
    \path[edge_yellow] (v2) to[bend right=-25] (Cprime1);
    \draw[->,>={LaTeX[black,length=10pt, width=10pt]},draw opacity=0] (v2) to[bend right=-25] (Cprime1);
    
    \path[edge_green] (Cprime2) to[bend right=-25] (v2);
    \draw[->,>={LaTeX[black,length=10pt, width=10pt]},draw opacity=0] (Cprime2) to[bend right=-25] (v2);
    \path[edge_green] (v2) to[bend right=0] (Cprime2);
    \draw[->,>={LaTeX[black,length=10pt, width=10pt]},draw opacity=0] (v2) to[bend right=0] (Cprime2);
    
    \draw[very thick,->,>={LaTeX[black,length=10pt, width=10pt]}] (Cprime2) to[bend right=-45] (v3);
    \draw[very thick,->,>={LaTeX[black,length=10pt, width=10pt]}] (v3) to[bend right=15] (Cprime2);
    
    \draw[very thick,->,>={LaTeX[black,length=10pt, width=10pt]}] (u1) -- (v1);
    \draw[very thick,->,>={LaTeX[black,length=10pt, width=10pt]}] (v2) -- (u2);
    \draw[very thick,->,>={LaTeX[black,length=10pt, width=10pt]}] (u3) -- (v3);
  \end{scope}

\end{tikzpicture}
\caption{An example for the constructions used in the proof of Theorem~\ref{thm:ECA_orientability_rooted}. Above, we can see the graph $G$ constructed from the Boolean formula $\varphi = (x_1 \lor x_2) \land (x_2 \lor x_3)$. Below, we can see the orientation corresponding to the assignment $x_1 = x_3 = \texttt{true}$ and $x_2 = \texttt{false}$.}
\label{fig:arc_orientation_rooted}
\end{figure}

 Now we show that $G$ admits an arc-1-color-avoiding rooted 1-connected orientation with root $r$ if and only if the formula $\varphi$ is satisfiable in such a way that for each clause, not all literals have the same truth value.

 Assume first that $\varphi$ is satisfiable in such a way that for each clause, not all literals have the same truth value. For each $i \in [n]$, let us orient the edge $u_i v_i$ from $u_i$ to $v_i$ if $x_i = \texttt{true}$, and let us orient this edge from $v_i$ to $u_i$ if $x_i = \texttt{false}$. For each $j \in [m]$, let us orient the edge $r w_j$ from $r$ to $w_j$, and let us orient the edge $r w'_j$ from $r$ to $w'_j$. Finally, orient all the parallel edges in opposite directions. Similarly to the proof of Theorem~\ref{thm:ECA_orientability_strong}, it can be shown that with this orientation, $G$ is arc-1-color-avoiding rooted 1-connected orientation with root $r$.

 Now assume that $G$ admits an arc-1-color-avoiding rooted 1-connected orientation with root $r$, and let us consider such an orientation. For any $i \in [n]$, let $x_i = \texttt{true}$ if the edge $u_i v_i$ is oriented from $u_i$ to $v_i$, and let $x_i = \texttt{false}$ if the edge $u_i v_i$ is oriented from $v_i$ to $u_i$. Similarly to the proof of Theorem~\ref{thm:ECA_orientability_strong}, it can be shown that this truth assignment satisfies $\varphi$ in such a way that for each clause, not all literals have the same truth value.

 To assign precisely one color to each edge, we do the following modifications. Let us replace each pair of parallel edges whose color list has length $L \ge 2$ with a path of $L$ pairs of parallel edges, where each pair is assigned exactly one of the colors from the original color list. Let us replace each edge whose color list is empty with another edge of color $2m+1$. Using that each variable appears in exactly four distinct clauses, it is not difficult to show that this modified graph admits an arc-1-color-avoiding strongly 1-connected coloring if and only if $\varphi$ can be satisfied in such a way that for each clause, not all literals have the same truth value.
\end{proof}
 
\subsection{Vertex-color-avoiding connected orientations}

Here we prove the analogues of Theorems~\ref{thm:ECA_orientability_strong}--\ref{thm:ECA_orientability_rooted} for vertex-color-avoiding connectivity. 

\begin{theorem} \label{thm:VCA_orientability_strong}
 Given a vertex-colored graph $G$ and numbers $k, \ell \in \mathbb{Z}_+$, deciding whether $G$ has a vertex-$\ell$-color-avoiding strongly $k$-arc- or $k$-vertex-connected orientation is NP-complete. This problem remains NP-complete even if $k = \ell = 1$.
\end{theorem}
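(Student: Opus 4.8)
The plan is to prove NP-completeness by reduction, reusing the source problem \textsc{Positive-Linear-NAE-3SAT-Exact4} from Theorem~\ref{thm:ECA_orientability_strong} and adapting its construction so that the colors live on vertices rather than on edges. Membership in NP is immediate, since given an orientation one can, for each color, delete the corresponding vertices and check strong connectivity in polynomial time; moreover, for $k=1$ the strongly $1$-arc-connected and strongly $1$-vertex-connected requirements coincide (as noted in the preliminaries), so a single construction will settle both variants. As in the edge case, I would first build an instance in which each vertex carries a list of colors and only afterwards split multi-color vertices into short paths so that every vertex receives a single color.

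First I would recreate the skeleton of the graph from Theorem~\ref{thm:ECA_orientability_strong}: the hubs $s,s'$, a truth-gadget edge $u_i v_i$ for each variable (whose orientation, $u_i\to v_i$ versus $v_i\to u_i$, encodes $x_i=\texttt{true}$ versus $x_i=\texttt{false}$), and a vertex $w_j$ for each clause. The essential move is to relocate each color that previously sat on an edge onto a freshly inserted degree-two \emph{connector} vertex that subdivides that edge and is colored with the edge's color; where the edge version used a pair of parallel edges (to allow a two-way connection within one color) I would use two parallel connectors. Deleting a clause color $j$ then deletes exactly the connectors representing the color-$j$ edges, so what remains is, up to these subdivisions, precisely $G$ with its color-$j$ edges removed; the directed-cut argument of Theorem~\ref{thm:ECA_orientability_strong} (if all literals of $C_j$ are equal, then the arcs $v_iu_i$ or the arcs $u_iv_i$ form a one-way cut) then transfers, so the color-$j$ removal leaves a strongly connected digraph if and only if clause $C_j$ is not-all-equal satisfied. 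Since in any strongly connected orientation each degree-two connector must be a through-vertex, an orientation of the constructed graph induces a well-defined orientation of the underlying truth-gadget edges, which is what lets me read off a truth assignment in the backward direction.

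The genuinely new difficulty, and the step I expect to be the main obstacle, is that in the vertex model there are no unremovable vertices: every hub and every $u_i,v_i,w_j$ must also carry a color, and deleting that color must not destroy strong connectivity. Giving the skeleton a single shared color fails immediately, and giving each skeleton vertex its own private color creates a \emph{dangling} problem, since after deleting such a vertex its incident connectors become sources or sinks and strong connectivity is lost. To circumvent this I would give the skeleton vertices pairwise distinct fresh colors but wire the skeleton with enough parallel, oppositely orientable connections that it becomes two-way robust: concretely, arrange the hubs and gadget endpoints so that in the intended orientation coming from a satisfying assignment, deleting any single skeleton vertex still leaves every remaining vertex with both an in- and an out-neighbour and keeps the digraph strongly connected. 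Verifying this robustness — that every color class, not only the clause colors, can be removed without creating a source or a sink — is the crux; the clause logic itself is inherited from Theorem~\ref{thm:ECA_orientability_strong}.

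Finally I would assemble the equivalence in the two usual directions. For the forward direction, from a not-all-equal satisfying assignment I orient the truth gadgets accordingly and the parallel connectors in opposite directions, then check both families of color removals (clause colors via the transferred cut argument, skeleton colors via the robustness above). For the backward direction, from any surviving orientation I define $x_i$ by the orientation of $u_iv_i$ and use the clause-color removals together with the directed-cut obstruction to conclude that no clause can be monochromatic, i.e.\ that $\varphi$ is not-all-equal satisfiable. Splitting each multi-color connector into a path of single-color connectors, one per color exactly as in Theorem~\ref{thm:ECA_orientability_strong}, makes every vertex single-colored without changing which connections a given color removal severs, completing the reduction for $k=\ell=1$ and hence the general statement.
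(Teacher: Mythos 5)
Your overall strategy differs from the paper's: you redo the \textsc{Positive-Linear-NAE-3SAT-Exact4} gadget of Theorem~\ref{thm:ECA_orientability_strong} from scratch with colors pushed onto subdivision vertices, whereas the paper reduces directly from the (already established) NP-hard problem of Theorem~\ref{thm:ECA_orientability_strong}: it never touches the SAT gadget again, but instead transforms an arbitrary edge-colored instance $G$ by representing each edge $e=\{u,v\}$ as a pair of adjacent vertices $u'_e, v'_e$ carrying the color of $e$, and --- crucially --- by splitting every original vertex $v$ into \emph{two} copies $v''$ and $v'''$ with two distinct fresh colors, each doubly joined to all the incident edge-vertices.

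This last device is exactly what your proposal is missing, and the omission is a genuine gap rather than a deferrable detail. You correctly identify the ``dangling'' problem: if a skeleton vertex (say $w_j$, or $s'$) carries a private color and is deleted, then every degree-two connector subdividing one of its incident edges loses an endpoint and becomes a source or a sink, so the remaining digraph cannot be strongly connected. But the fix you propose --- wiring the skeleton with more parallel, oppositely orientable connections --- does not address this failure mode at all: a connector on the edge $s w_j$ is adjacent only to $s$ and $w_j$, and no amount of additional skeleton wiring gives it a second attachment once $w_j$ is gone. (The same problem already kills the forward direction at $s'$ even without connectors: each $v_i$ meets the rest of the graph only through the one-way truth edge to $u_i$ and through $s'$, so deleting the private color of $s'$ turns $v_i$ into a source or a sink.) The only workable repairs are either to abandon degree-two connectors (which destroys your through-vertex argument for reading off the orientation) or to duplicate every skeleton vertex into two copies of distinct colors so that no neighbour ever loses all of its skeleton-side attachments --- and that duplication is precisely the paper's $v''/v'''$ construction, which your proposal does not reach. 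As written, the reduction's forward direction fails, so the proof is incomplete at the step you yourself flag as the crux.
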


\begin{proof}
 This problem is obviously in NP, and to show that it is NP-hard even for the case $k = \ell = 1$, we reduce the problem of Theorem \ref{thm:ECA_orientability_strong} to it. Our goal in this problem is to decide if a given edge-colored graph admits an arc-1-color-avoiding strongly 1-connected orientation.
   
 Let $G=(V,E)$ be such an arbitrary edge-colored graph on at least two vertices, whose edges are colored with color set $[m]$. Let $G'$ be defined as follows. For each $e = \{ u, v \} \in E(G)$, let $u'_e$ and $v'_e$ be vertices of the same color in $G'$ as that of $e$ in $G$. Let
 \[ V' \colonequals \{ u'_e, v'_e \mid e \in E\}, \]
 let
 \[ V'' \colonequals \{ v'' \mid v \in V \} \]
 be the set of vertices of color $m+1$,
 \[ V''' \colonequals \{ v''' \mid v \in V \} \]
 be the set of vertices of color $m+2$, and let $V(G') \colonequals V' \cup V'' \cup V'''$. For each $e = \{ u, v \} \in E(G)$, connect the vertices $u'_e$ and $v'_e$ with an edge in $G'$, and connect $u'_e$ with two parallel edges both to $u''$ and to $u'''$, and also connect $v'_e$ with two parallel edges both to $v''$ and to $v'''$. Clearly, $G'$ can be constructed in polynomial time. For an example, see Figure~\ref{fig:VCA-orientation-strong}.
   
 \begin{figure}[!ht]
 \centering
 \begin{tikzpicture}
  \tikzset{edge_red/.style={postaction={decorate, decoration={markings, mark=between positions 3pt and 1-2pt step 7pt with {\draw[red!50!black,thin,fill=red!50] (30:0.08) -- (150:0.08) -- (210:0.08) -- (330:0.08) -- (30:0.08);}}}}}
  \tikzset{edge_blue/.style={postaction={decorate, decoration={markings, mark=between positions 5pt and 1-2pt step 6pt with {\draw[blue!50!black,thin,fill=blue!50] (0:0.08) -- (120:0.08) -- (240:0.08) -- (0:0.08);}}}}}
  \tikzset{edge_green/.style={postaction={decorate, decoration={markings, mark=between positions 5pt and 1-2pt step 8.5pt with {\draw[green!50!black,thin,fill=green!50] (0:0.16) -- (90:0.08) -- (180:0.08) -- (270:0.08) -- (0:0.16);}}}}}

  \tikzstyle{vertex}=[draw,circle,fill,minimum size=8,inner sep=0]
  
  \tikzstyle{vertex_red}=[draw, thick, red!50!black, shape=rectangle, fill=red!50, minimum size=8pt, inner sep=0]
  \tikzstyle{vertex_blue}=[draw, thick, blue!50!black, shape=regular polygon, regular polygon sides=3, fill=blue!50, minimum size=12pt, inner sep=0]
  \tikzstyle{vertex_green}=[draw, green!50!black, kite, fill=green!50, minimum size=11pt, inner sep=0.1pt]
  \tikzstyle{vertex_yellow}=[draw, yellow!50!black, shape=regular polygon, regular polygon sides=5, fill=yellow!50, minimum size=11pt, inner sep=0]
  \tikzstyle{vertex_orange}=[draw, orange!50!black, shape=star, star points=5, fill=orange!50, minimum size=11pt, inner sep=0]

    \tikzset{vtx/.style={circle, inner sep=2.5pt, draw=black, thin}}
    \tikzset{v_black/.style ={vtx, fill=black}}
    \tikzset{v_red/.style   ={vtx, fill=red}}
    \tikzset{v_green/.style ={vtx, fill=green}}
    \tikzset{v_blue/.style  ={vtx, fill=blue}}
    \tikzset{v_purple/.style={vtx, fill=purple}}
    \tikzset{v_brown/.style ={vtx, fill=brown}}
   
   \node[vertex] (u) at (90:2) [label=above:$u$] {};
   \node[vertex] (v) at (0,0) [label=below:$v$] {};
   \node[vertex] (w) at (210:2) [label=below:$w$] {};
   \node[vertex] (s) at (330:2) [label=below:$s$] {};
   
   \path[edge_red] (v) -- (u) node[pos=0.5, right] {$e$};
   \path[edge_blue] (v) -- (w) node[pos=0.5, above left] {$f$};
   \path[edge_green] (v) -- (s) node[pos=0.5, above right] {$g$};
  
  \begin{scope}[shift={(7.5,0)}]
   \coordinate (center) at (0,0);
   \coordinate (top_tip) at (90:3);
   \coordinate (left_tip) at (210:3);
   \coordinate (right_tip) at (330:3);

   \node[vertex_red] (ve) at (90:1.25) [label=left:$v'_e$] {};
   \node[vertex_red] (ue) at (90:2) [label=left:$u'_e$] {};
   \node[vertex_blue] (vf) at (210:1.25) [label={[xshift=10pt, yshift=-24pt]: $v'_f$}] {};
   \node[vertex_blue] (wf) at (210:2) [label={[xshift=10pt, yshift=-24pt]: $w'_f$}] {};
   \node[vertex_green] (vg) at (330:1.25) [label={[xshift=-8pt, yshift=-22pt]: $v'_g$}] {};
   \node[vertex_green] (sg) at (330:2) [label={[xshift=-8pt, yshift=-22pt]: $s'_g$}] {};

   \node[vertex_yellow] (v'') at (165:0.5) [label=left:$v''$] {};
   \node[vertex_orange] (v''') at (15:0.5) [label=right:$v'''$] {};
   \node[vertex_yellow] (u'') at (100:3) [label=left:$u''$] {};
   \node[vertex_orange] (u''') at (80:3) [label=right:$u'''$] {};
   \node[vertex_yellow] (w'') at (200:3) [label=left:$w''$] {};
   \node[vertex_orange] (w''') at (220:3) [label=left:$w'''$] {};
   \node[vertex_yellow] (s'') at (340:3) [label=right:$s''$] {};
   \node[vertex_orange] (s''') at (320:3) [label=right:$s'''$] {};

   \draw[very thick] (ue) -- (ve);
   \draw[very thick] (wf) -- (vf);
   \draw[very thick] (sg) -- (vg);

   \draw[very thick] (ue) to[bend right=15] (u'');
   \draw[very thick] (ue) to[bend right=-15] (u'');
   \draw[very thick] (ue) to[bend right=15] (u''');
   \draw[very thick] (ue) to[bend right=-15] (u''');
   \draw[very thick] (wf) to[bend right=15] (w'');
   \draw[very thick] (wf) to[bend right=-15] (w'');
   \draw[very thick] (wf) to[bend right=15] (w''');
   \draw[very thick] (wf) to[bend right=-15] (w''');
   \draw[very thick] (sg) to[bend right=15](s'');
   \draw[very thick] (sg) to[bend right=-15](s'');
   \draw[very thick] (sg) to[bend right=15] (s''');
   \draw[very thick] (sg) to[bend right=-15] (s''');
        
   \draw[very thick] (ve) to[bend right=15] (v'');
   \draw[very thick] (ve) to[bend right=-15] (v'');
   \draw[very thick] (ve) to[bend right=15] (v''');
   \draw[very thick] (ve) to[bend right=-15] (v''');
   \draw[very thick] (vf) to[bend right=15] (v'');
   \draw[very thick] (vf) to[bend right=-15] (v'');
   \draw[very thick] (vf) to[bend right=10] (v''');
   \draw[very thick] (vf) to[bend right=-10] (v''');
   \draw[very thick] (vg) to[bend right=10] (v'');
   \draw[very thick] (vg) to[bend right=-10] (v'');
   \draw[very thick] (vg) to[bend right=15] (v''');
   \draw[very thick] (vg) to[bend right=-15] (v''');
  \end{scope}
 \end{tikzpicture}
 \caption{An example for the construction used in the proof of Theorem~\ref{thm:VCA_orientability_strong}. On the left, we can see an edge-colored graph $G$, and on the right, we can see the vertex-colored graph $G'$ constructed from $G$.}
 \label{fig:VCA-orientation-strong}
 \end{figure}
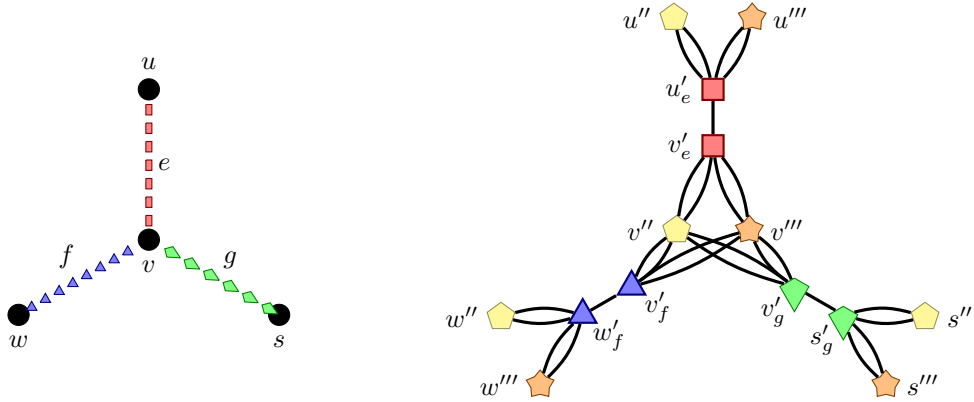
    
 Now we show that $G$ admits an arc-1-color-avoiding strongly 1-connected orientation if and only if $G'$ admits a vertex-1-color-avoiding strongly 1-connected orientation.

 \smallskip
 
 First, assume that $G$ admits an arc-1-color-avoiding strongly 1-connected orientation, and consider such an orientation of $G$. Let us orient the parallel pairs of edges of $G'$ forward and backward. All the other edges of $G'$ are of the form $u'_e v'_e$ for some vertices $u, v \in V(G)$ with $e = \{ u, v \} \in E(G)$. Let us orient such an edge $\{ u'_e, v'_e \}$ from $u'_e$ to $v'_e$ in $G'$ if $e$ is oriented from $u$ to $v$ in $G$; otherwise let the edge $\{ u'_e, v'_e \}$ be oriented from $v'_e$ to $u'_e$. It is not difficult to show that this is a vertex-1-color-avoiding strongly 1-connected orientation of $G'$.

 \smallskip

 Now assume that $G'$ admits a vertex-1-color-avoiding strongly 1-connected orientation, and consider such an orientation of $G$. For any $e = \{ u, v \} \in E(G)$, let us orient the edge $e$ from $u$ to $v$ in $G$ if the edge $\{ u'_e, v'_e \}$ is oriented from $u'_e$ to $v'_e$ in $G'$; otherwise let the edge $e$ be oriented from $v$ to $u$. It is not difficult to show that this is an arc-1-color-avoiding strongly 1-connected orientation of $G$. 
\end{proof}

\begin{theorem} \label{thm:VCA_orientability_rooted}
 Given a vertex-colored graph $G$ and numbers $k, \ell \in \mathbb{Z}_+$, deciding whether $G$ has a vertex-$\ell$-color-avoiding rooted $k$-arc- or $k$-vertex-connected orientation is NP-complete. This problem remains NP-complete even if $k = \ell = 1$.
\end{theorem}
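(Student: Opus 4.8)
The plan is to mirror the structure of the proof of Theorem~\ref{thm:VCA_orientability_strong}: membership in NP is immediate, and for NP-hardness (already for $k=\ell=1$, where rooted $k$-arc- and $k$-vertex-connectivity coincide) I would reduce from the \emph{arc}-color-avoiding \emph{rooted} orientation problem of Theorem~\ref{thm:ECA_orientability_rooted} rather than from its strong counterpart. Thus I start from an edge-colored graph $G=(V,E)$ with a distinguished root $r$ and a single color from $[m]$ on each edge (the single-color output of that theorem's construction), and ask whether $G$ admits an arc-$1$-color-avoiding rooted $1$-connected orientation with root $r$.

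The vertex-colored graph $G'$ is built by the same edge-to-vertex gadget used for Theorem~\ref{thm:VCA_orientability_strong}: for each edge $e=\{u,v\}$ of color $c$ I introduce two vertices $u'_e,v'_e$ of color $c$ joined by an edge $u'_e v'_e$, and attach $u'_e$ by two parallel edges to a hub $u''$ of color $m+1$ and by two parallel edges to a hub $u'''$ of color $m+2$ (symmetrically for $v'_e$). To encode the root I would add one fresh vertex $r^*$ of a brand-new color $m+3$, join it by single edges to the hubs $r''$ and $r'''$, and declare $r^*$ to be the root of $G'$. Since each endpoint-copy $v'_e$ is linked bidirectionally (via oppositely oriented parallel pairs) to both hubs $v''$ and $v'''$, the two hub families are redundant, which is what will make the auxiliary colors $m+1,m+2$ harmless.

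The orientation dictionary is the one from Theorem~\ref{thm:VCA_orientability_strong}: orient $e$ from $u$ to $v$ in $G$ exactly when $u'_e v'_e$ is oriented $u'_e\to v'_e$ in $G'$, orient every parallel pair in opposite directions, and---using that reorienting a root-incident edge outward never decreases reachability from the root---assume the two edges at $r^*$ point away from $r^*$. For the forward direction I take an arc-$1$-color-avoiding rooted orientation of $G$ and verify reachability from $r^*$ to every vertex of $G'$ after deleting the vertices of any one color: deleting a color $c\in[m]$ deletes exactly the copies of the color-$c$ edges of $G$, so an $r$-to-$x$ dipath of $G$ avoiding color $c$ translates, through the surviving hubs, into an $r^*$-to-(copy of $x$) dipath; deleting $m+1$ or $m+2$ removes only one hub family, and the surviving family still threads all copies together; and deleting the root color $m+3$ lands in the branch of the vertex-$C'$-avoiding definition that asks only for an $r^*$--$v$ dipath in the \emph{full} digraph, which the orientation supplies. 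The converse reads the dictionary backwards: an induced orientation of $G$ is extracted and shown to be arc-$1$-color-avoiding rooted connected by the same color-by-color correspondence.

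I expect the main obstacle to be the bookkeeping in the root-reachability direction: checking that after deleting an auxiliary hub color the root $r^*$ can still reach every endpoint-copy \emph{and} every remaining hub (using that the hypothesis ``$G$ is validly orientable'' forbids any vertex whose incident edges are all of one color, which would otherwise isolate a hub), and pinning down precisely which branch of the definition applies when the freshly introduced root color is the removed one. Making the ``WLOG the edges at $r^*$ point away from $r^*$'' step rigorous in the color-avoiding setting, and confirming that the single-color-per-vertex construction requires no further surgery---unlike the edge-list constructions of Theorems~\ref{thm:ECA_orientability_strong}--\ref{thm:ECA_orientability_rooted}---are the remaining points to nail down.
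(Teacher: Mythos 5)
Your proposal is correct and follows essentially the same route as the paper: membership in NP plus a reduction from the arc-color-avoiding rooted orientation problem of Theorem~\ref{thm:ECA_orientability_rooted} using the edge-to-vertex gadget $G'$ of Theorem~\ref{thm:VCA_orientability_strong} with the same orientation dictionary. The only difference is cosmetic: the paper designates the root by deleting $r'''$ and rooting at $r''$, whereas you add a fresh root vertex $r^*$ of a new color adjacent to both hubs --- both choices work.
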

\begin{proof}
 This problem is obviously in NP, and to show that it is NP-hard even for the case $k = \ell = 1$, we reduce the problem of Theorem~\ref{thm:ECA_orientability_rooted} to it. Our goal in this problem is to decide if a given edge-colored graph with a given root $r$ admits an arc-1-color-avoiding $r$-rooted 1-connected orientation.

 Let $G=(V,E)$ be an arbitrary edge-colored graph on at least two vertices with a distinguished vertex $r$. Let $G'$ be the same vertex-colored graph defined in the proof of Theorem~\ref{thm:VCA_orientability_strong}, and consider the graph $G' - r'''$, that is, the graph obtained from $G'$ by removing the vertex $r'''$.

 Analogously to the proof of Theorem~\ref{thm:VCA_orientability_strong}, it can be shown that $G$ admits an arc-1-color-avoiding $r$-rooted 1-connected orientation if and only if $G' - r'''$ admits a vertex-1-color-avoiding $r''$-rooted 1-connected orientation.
\end{proof}

\subsection{Internally vertex-color-avoiding connected orientations}

Now let us consider the case when we want to find an internally vertex-color-avoiding strongly connected orientation.

\begin{theorem} \label{thm:IVCA_orientability_strong}
 Given a vertex-colored graph $G$ and numbers $k, \ell \in \mathbb{Z}_+$, deciding whether $G$ has an internally vertex-$\ell$-color-avoiding strongly $k$-arc- or $k$-vertex-connected orientation is NP-complete. This problem remains NP-complete even if $k = \ell = 1$.
\end{theorem}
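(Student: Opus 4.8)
Membership in NP is immediate, so the plan is to prove NP-hardness already for $k=\ell=1$ by reducing from the arc-color-avoiding strong orientation problem of Theorem~\ref{thm:ECA_orientability_strong}. The key observation is that the vertex-colored gadget $G'$ built from an edge-colored graph $G$ in the proof of Theorem~\ref{thm:VCA_orientability_strong} can be reused verbatim: I would show that $G$ admits an arc-$1$-color-avoiding strongly $1$-connected orientation if and only if $G'$ admits an \emph{internally} vertex-$1$-color-avoiding strongly $1$-connected orientation. Since for $k=\ell=1$ the strongly $1$-arc-connected and strongly $1$-vertex-connected notions coincide, this single reduction settles both variants of the statement at once.

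The backward direction is the easy half. Every internally vertex-$\ell$-color-avoiding orientation is in particular vertex-$\ell$-color-avoiding, since the former is the stronger notion (this is exactly the implication observed for the undirected case in Section~\ref{section:preliminaries}, and the same argument works for strong connectivity: an internally-$C'$-avoiding dipath is in particular a dipath, so it also witnesses the weaker requirement). Hence any internally vertex-$1$-color-avoiding strongly $1$-connected orientation of $G'$ is also vertex-$1$-color-avoiding strongly $1$-connected, and the backward direction of Theorem~\ref{thm:VCA_orientability_strong} then produces the desired arc-$1$-color-avoiding strongly $1$-connected orientation of $G$.

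The forward direction is where the real work lies, and it is the step I expect to be the main obstacle. Starting from an arc-$1$-color-avoiding strongly $1$-connected orientation of $G$, I would take the canonical orientation $D'$ of $G'$ constructed in the forward direction of Theorem~\ref{thm:VCA_orientability_strong} (each parallel pair oriented in opposite directions, and each middle edge $u'_e v'_e$ oriented according to the orientation of $e$); that proof already guarantees $D'$ is vertex-$1$-color-avoiding strongly $1$-connected. What remains is to \emph{upgrade} this to internally vertex-$1$-color-avoiding strong connectivity, and for this I would invoke part~\ref{item:ii_of_differently_colored_neighbor} of Lemma~\ref{lemma:differently_colored_neighbor}. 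The hypothesis to verify is that in $D'$ every vertex has both an in- and an out-neighbor of a different color. This holds precisely because of the way the gadget attaches immune vertices: each edge-vertex $u'_e$ (whose color lies in $[m]$) is joined by a bidirectional parallel pair to the vertices $u''$ and $u'''$ of the two fresh colors $m+1$ and $m+2$, while each immune vertex $v''$ or $v'''$ is joined bidirectionally to some edge-vertex $v'_e$ of a color in $[m]$ (assuming, as we may, that $G$ has no isolated vertices, since otherwise it has no strong orientation at all).

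With the hypothesis of the lemma verified, part~\ref{item:ii_of_differently_colored_neighbor} of Lemma~\ref{lemma:differently_colored_neighbor} gives that on $D'$ internally vertex-$1$-color-avoiding and vertex-$1$-color-avoiding strong connectivity coincide; hence $D'$ is internally vertex-$1$-color-avoiding strongly $1$-connected, completing the forward direction. The anticipated difficulty is entirely concentrated in this upgrade step: the whole point of equipping the gadget with two distinct fresh colors attached by bidirectional parallel pairs is to force the differently-colored-neighbor condition, which is exactly what lets the lemma convert the vertex-avoiding guarantee coming from Theorem~\ref{thm:VCA_orientability_strong} into the stronger internal one.
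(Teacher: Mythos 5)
Your proposal is correct and follows essentially the same route as the paper: the paper likewise reduces from Theorem~\ref{thm:ECA_orientability_strong} by reusing the gadget $G'$ from the proof of Theorem~\ref{thm:VCA_orientability_strong} and invoking part~\ref{item:ii_of_differently_colored_neighbor} of Lemma~\ref{lemma:differently_colored_neighbor} to pass between the vertex-avoiding and internally vertex-avoiding notions. The paper states this in one line, whereas you have correctly filled in the details it leaves implicit (the easy backward implication and the verification of the differently-colored-neighbor hypothesis on the canonical orientation).
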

\begin{proof}
 This problem is obviously in NP, and to show that it is NP-hard even for the case $k = \ell = 1$, we reduce the problem of Theorem~\ref{thm:ECA_orientability_strong} to it. Our goal in this problem is to decide if a given dge-colored graph admits an arc-1-color-avoiding strongly 1-connected orientation.

 Let $G = (V,E)$ be an arbitrary edge-colored graph on at least two vertices, and let $G'$ be the same graph defined in the proof of Theorem~\ref{thm:VCA_orientability_strong}. Then by Lemma~\hyperlink{item:ii_of_lemma_differently_colored_neighbor}{\ref*{lemma:differently_colored_neighbor}.\ref*{item:ii_of_differently_colored_neighbor}} and by the proof of Theorem~\ref{thm:VCA_orientability_strong}, the graph $G$ admits an arc-1-color-avoiding strongly 1-connected orientation if and only if $G'$ admits an internally vertex-1-color-avoiding 1-connected orientation.
\end{proof}

\begin{theorem} \label{thm:IVCA_orientability_rooted}
 Given a vertex-colored graph $G$ and numbers $k, \ell \in \mathbb{Z}_+$, deciding whether $G$ has an internally vertex-$\ell$-color-avoiding rooted $k$-arc- or $k$-vertex-connected orientation is NP-complete. This problem remains NP-complete even if $k = \ell = 1$.
\end{theorem}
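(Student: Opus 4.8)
The plan is to reduce from the NP-complete problem of Theorem~\ref{thm:ECA_orientability_rooted} --- deciding whether an edge-colored graph $G$ with a distinguished vertex $r$ admits an arc-$1$-color-avoiding $r$-rooted $1$-connected orientation --- in exact parallel with the proof of Theorem~\ref{thm:IVCA_orientability_strong}, which handled the strongly connected case by combining the vertex-coloring gadget of Theorem~\ref{thm:VCA_orientability_strong} with Lemma~\hyperlink{item:ii_of_lemma_differently_colored_neighbor}{\ref*{lemma:differently_colored_neighbor}.\ref*{item:ii_of_differently_colored_neighbor}}. Membership in NP is immediate, since a proposed orientation can be verified by deleting the arcs of each color in turn and testing rooted connectivity in polynomial time. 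For hardness I would take an arbitrary edge-colored $G$ with root $r$ and build the vertex-colored graph $G'-r'''$ with designated root $r''$ exactly as in the proof of Theorem~\ref{thm:VCA_orientability_rooted}. It then suffices to chain two equivalences: first that $G$ admits an arc-$1$-color-avoiding $r$-rooted $1$-connected orientation if and only if $G'-r'''$ admits a vertex-$1$-color-avoiding $r''$-rooted $1$-connected orientation (this is Theorem~\ref{thm:VCA_orientability_rooted} itself), and second that on the relevant orientations of $G'-r'''$ the vertex- and internally-vertex-$1$-color-avoiding $r''$-rooted notions coincide.

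The second equivalence is where Lemma~\hyperlink{item:iii_of_lemma_differently_colored_neighbor}{\ref*{lemma:differently_colored_neighbor}.\ref*{item:iii_of_differently_colored_neighbor}} enters, and here one direction is free, since every internally-vertex-color-avoiding orientation is automatically vertex-color-avoiding. For the converse I would take the specific orientation produced in the forward direction of Theorem~\ref{thm:VCA_orientability_rooted}, in which every pair of parallel edges is oriented in opposite directions. In such an orientation each edge-vertex $u'_e$ has $u''$ or $u'''$ (of the special colors $m+1,m+2$) as an in-neighbor, and each gadget vertex of color $m+1$ or $m+2$ has some $u'_e$ (of a color in $[m]$) as an in-neighbor, so every non-root vertex has an in-neighbor of a different color; Lemma~\hyperlink{item:iii_of_lemma_differently_colored_neighbor}{\ref*{lemma:differently_colored_neighbor}.\ref*{item:iii_of_differently_colored_neighbor}} then promotes the vertex-color-avoiding orientation to an internally-vertex-color-avoiding one, closing the chain. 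As before, I would finish with the routine single-color-per-edge cleanup (subdividing the parallel edges that carry long color lists and recoloring the uncolored edges with a fresh color), and remark that the argument is insensitive to whether one asks for $k$-arc- or $k$-vertex-connectivity, so that NP-completeness holds already for $k=\ell=1$.

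I expect the main obstacle to be the remaining hypothesis of Lemma~\hyperlink{item:iii_of_lemma_differently_colored_neighbor}{\ref*{lemma:differently_colored_neighbor}.\ref*{item:iii_of_differently_colored_neighbor}}, namely that the root $r''$ must carry a \emph{unique} color: in the construction of Theorem~\ref{thm:VCA_orientability_rooted} the vertex $r''$ belongs to $V''$ and hence shares color $m+1$ with all the other $V''$-vertices, so the lemma does not apply verbatim. The natural repair is to recolor $r''$ with a brand-new color distinct from all others and then re-examine the equivalence of Theorem~\ref{thm:VCA_orientability_rooted} under this change. The delicate point is that the color class containing $r''$ becomes a singleton: deleting it removes precisely the root, which must be reconciled with the definition of rooted color-avoiding connectivity (where deleting an endpoint's color imposes at most an ordinary-connectivity requirement), while deleting the old color $m+1$ now leaves $r''$ in place and must still be shown harmless. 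Verifying that this recoloring preserves the biconditional of Theorem~\ref{thm:VCA_orientability_rooted} --- and in particular that no new obstruction to rooted reachability is created when $m+1$ is removed --- is the one genuinely non-routine step; once it is settled, the whole reduction goes through.
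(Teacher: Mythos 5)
Your proposal matches the paper's proof essentially exactly: the paper also reduces from Theorem~\ref{thm:ECA_orientability_rooted}, forms $G'-r'''$ with root $r''$ as in Theorem~\ref{thm:VCA_orientability_rooted}, recolors $r''$ to a fresh color $m+3$ precisely so that Lemma~\hyperlink{item:iii_of_lemma_differently_colored_neighbor}{\ref*{lemma:differently_colored_neighbor}.\ref*{item:iii_of_differently_colored_neighbor}} applies, and then chains that lemma with the equivalence from the proof of Theorem~\ref{thm:VCA_orientability_rooted}. The ``non-routine step'' you flag (checking the recoloring preserves the biconditional) is handled in the paper at the same level of detail as in your sketch, so there is no substantive difference.
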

\begin{proof}
 This problem is obviously in NP, and to show that it is NP-hard even for the case $k = \ell = 1$, we reduce the problem of Theorem~\ref{thm:ECA_orientability_rooted} to it. Our goal in this problem is to decide if a given edge-colored graph with a given root $r$ admits an arc-1-color-avoiding $r$-rooted 1-connected orientation.

 Let $G=(V,E)$ be an arbitrary edge-colored graph on at least two vertices with a distinguished vertex $r$, whose edges are colored with color set $[m]$. Let $G'$ be the same vertex-colored graph defined in the proof of Theorem~\ref{thm:VCA_orientability_strong}, and consider the vertex-colored graph $G''$ that can be obtained from $G' - r'''$ by recoloring $r''$ to color $m+3$. 
 
 Then by Lemma~\hyperlink{item:iii_of_lemma_differently_colored_neighbor}{\ref*{lemma:differently_colored_neighbor}.\ref*{item:iii_of_differently_colored_neighbor}} and by the proof of Theorem~\ref{thm:VCA_orientability_rooted}, the graph $G$ admits an arc-1-color-avoiding $r$-rooted 1-connected orientation if and only if $G''$ admits an internally vertex-1-color-avoiding $r''$-rooted 1-connected orientation.
\end{proof}

\subsection{Simultaneous orientation and color-avoiding connected coloring}

Finally, we consider the problem of deciding whether the edges of a given graph $G$ can be oriented so that the obtained digraph has a color-avoiding strongly or rooted connected coloring with a given number of colors.

We start with the case of finding an orientation with an arc-$\ell$-color-avoiding strongly $k$-arc- or $k$-vertex-connected coloring.

\begin{theorem}
 Let $G$ be a graph and let $c, k, \ell \in \mathbb{Z}_+$. Deciding whether the edges of $G$ can be oriented so that the obtained digraph has an arc-$\ell$-color-avoiding strongly $k$-arc- or $k$-vertex-connected coloring with at most $c$ colors is NP-complete. This problem remains NP-complete even if $G$ is 4-regular, $k = \ell = 1$ and $c = 2$.
 \end{theorem}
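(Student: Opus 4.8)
The plan is to observe that membership in NP is immediate (an orientation together with a coloring forms a polynomial-size certificate that can be checked by deleting each color class and testing strong connectivity), and then to prove NP-hardness already for the stated restricted case $G$ 4-regular, $k=\ell=1$, $c=2$ by reducing from the Hamiltonian-decomposition problem of Theorem~\ref{thm:Kotzig}. Since for $k=\ell=1$ both strong $k$-arc-connectivity and strong $k$-vertex-connectivity reduce to ordinary strong connectivity, the two variants coincide in this regime and can be treated simultaneously.

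The first step I would take is to record, exactly as in the proof of Corollary~\ref{cor:Yeo}, that a fixed orientation $D$ of $G$ admits an arc-1-color-avoiding strongly 1-connected coloring with at most two colors if and only if the arcs of $D$ can be partitioned into two strongly connected spanning subdigraphs. Indeed, deleting either color must leave a strongly connected spanning digraph, and since every arc carries exactly one color the two color classes are arc-disjoint and cover all of $A(D)$; moreover one color can never suffice, since deleting it would leave no arcs at all (here $G$ has at least two vertices).

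The heart of the reduction is then a tight counting argument. Writing $n=|V(G)|$, the 4-regularity of $G$ gives exactly $2n$ edges, hence every orientation $D$ has exactly $2n$ arcs. A strongly connected spanning subdigraph on $n$ vertices has at least $n$ arcs, with equality precisely when it is a single directed Hamiltonian cycle. Two arc-disjoint such subdigraphs therefore consume at least $2n$ arcs, so in any partition of the $2n$ arcs each class has exactly $n$ arcs and is a directed Hamiltonian cycle. Reading off the underlying undirected edges, the two classes yield two edge-disjoint Hamiltonian cycles of $G$. Conversely, if $G$ decomposes into two edge-disjoint Hamiltonian cycles, orienting each cycle cyclically produces two arc-disjoint directed Hamiltonian cycles, and coloring the two of them with the two colors gives the desired coloring. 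Hence $G$ can be oriented and properly 2-colored as required if and only if $G$ admits a Hamiltonian decomposition, which completes the reduction.

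I do not anticipate a serious technical obstacle here; the delicate point to get exactly right is the saturated arc budget, which forces each strongly connected spanning subdigraph to be \emph{precisely} a Hamiltonian dicycle and thereby shows that the extra freedom of choosing the orientation buys nothing beyond a Hamiltonian decomposition. Carefully verifying these equalities (and confirming that a single color never works) is all that the argument requires.
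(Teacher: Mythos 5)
Your proposal is correct and follows essentially the same route as the paper: a reduction from the Kotzig--Martin Hamiltonian-decomposition problem (Theorem~\ref{thm:Kotzig}), with the equivalence for 4-regular $G$, $k=\ell=1$, $c=2$ established by the arc-counting argument forcing each color class to be a Hamiltonian dicycle. The paper states this equivalence without proof, so your write-up simply supplies the details the paper leaves implicit.
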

\begin{proof}
 This problem is obviously in NP, and by Theorem~\ref{thm:Kotzig} it is also NP-hard: for $k = \ell = 1$ and $c = 2$ and for a 4-regular graph $G$, our problem becomes equivalent to deciding whether $G$ can be decomposed into two edge-disjoint Hamiltonian cycles.
\end{proof}

Now let us consider the case of finding an orientation with an arc-$\ell$-color-avoiding $r$-rooted 1-connected coloring.

\begin{theorem}
 Let $G$ be a graph with a given vertex $r \in V(G)$, and let $c, \ell \in \mathbb{Z}_+$. Then the edges of $G$ can be oriented so that the obtained digraph has an arc-$\ell$-color-avoiding $r$-rooted 1-connected coloring with at most $c$ colors if and only if $c \ge \ell+1$ and $G$ has an $r$-rooted $(\ell+1)$-connected orientation. Moreover, we can find such an orientation along with an arc-$\ell$-color-avoiding $r$-rooted 1-connected coloring (if exists) in polynomial time.
\end{theorem}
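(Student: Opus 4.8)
The plan is to reduce this combined orientation-and-coloring question to two independent, already-settled conditions, exploiting the characterizations proved earlier in this section. The crucial observation is that for a \emph{fixed} orientation $D$ of $G$, its colorability is completely governed by Proposition~\hyperlink{linkdest:item3:edge-prop-1}{\ref*{edge-prop-1}.\ref*{item3:edge-prop-1}} and Theorem~\ref{thm:rooted_mincolors}: the digraph $D$ admits an arc-$\ell$-color-avoiding $r$-rooted 1-connected coloring if and only if $D$ is $r$-rooted $(\ell+1)$-arc-connected, and whenever such a coloring exists, the minimum number of colors it needs is exactly $\ell+1$. Hence some orientation of $G$ carries a valid coloring with at most $c$ colors precisely when $c \ge \ell+1$ and $G$ possesses an $r$-rooted $(\ell+1)$-arc-connected orientation (which is what the statement abbreviates as an $r$-rooted $(\ell+1)$-connected orientation).

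For the forward direction I would argue as follows. Suppose the edges of $G$ can be oriented into a digraph $D$ that carries an arc-$\ell$-color-avoiding $r$-rooted 1-connected coloring with at most $c$ colors. By Theorem~\ref{thm:rooted_mincolors}, every such coloring uses at least $\ell+1$ colors, so $c \ge \ell+1$; and by Proposition~\hyperlink{linkdest:item3:edge-prop-1}{\ref*{edge-prop-1}.\ref*{item3:edge-prop-1}}, the mere existence of such a coloring forces $D$ to be $r$-rooted $(\ell+1)$-arc-connected, so $G$ indeed admits an $r$-rooted $(\ell+1)$-arc-connected orientation. The backward direction is then immediate: given $c \ge \ell+1$ together with an $r$-rooted $(\ell+1)$-arc-connected orientation $D$ of $G$, Theorem~\ref{thm:rooted_mincolors} supplies an arc-$\ell$-color-avoiding $r$-rooted 1-connected coloring of $D$ using exactly $\ell+1 \le c$ colors, which is the desired orientation-plus-coloring.

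For the algorithmic claim I would chain the constructive forms of these ingredients. First check the trivial inequality $c \ge \ell+1$. Next, apply the polynomial-time procedure for producing a rooted $(\ell+1)$-arc-connected orientation recalled at the start of this section (based on the Tutte--Nash-Williams characterization and Theorem~\ref{thm:rootededmonds}); this either certifies that $G$ has no such orientation---in which case, by the equivalence above, no valid orientation-and-coloring exists---or returns an explicit orientation $D$. Finally, feed $D$ into Theorem~\ref{thm:rooted_arc_mincoloring}, which in polynomial time produces the accompanying $(\ell+1)$-coloring via $\ell+1$ arc-disjoint spanning $r$-arborescences.

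I do not expect a genuine obstacle here, since the substance of the theorem is just the clean separation of the two constraints. The only point requiring care is the necessity of $c \ge \ell+1$: this uses the lower-bound half of Theorem~\ref{thm:rooted_mincolors}, which guarantees that \emph{no} orientation can be colored with fewer than $\ell+1$ colors, so that the threshold $c \ge \ell+1$ is simultaneously necessary and sufficient once an appropriate orientation is available.
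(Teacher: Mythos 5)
Your proposal is correct and follows essentially the same route as the paper: the equivalence is obtained by combining the characterization of colorable digraphs (Proposition~\hyperlink{linkdest:item3:edge-prop-1}{\ref*{edge-prop-1}.\ref*{item3:edge-prop-1}}) with the tight bound of $\ell+1$ colors from Theorem~\ref{thm:rooted_mincolors}, and the algorithmic claim chains a polynomial-time rooted $(\ell+1)$-arc-connected orientation algorithm with Theorem~\ref{thm:rooted_arc_mincoloring}. You merely spell out more explicitly the separation into existence and color-count arguments that the paper compresses into ``directly follows from Theorem~\ref{thm:rooted_mincolors}.''
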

\begin{proof}
 The first statement directly follows from Theorem~\ref{thm:rooted_mincolors}. Since we can find an $r$-rooted $(\ell+1)$-connected orientation of $G$ (or show that it does not have any) in polynomial time~\cites{frank2,gabow} and by Theorem~\ref{thm:rooted_arc_mincoloring} we can find an arc-$\ell$-color-avoiding $r$-rooted 1-connected coloring of the obtained digraph with $\ell+1$ colors in polynomial time, we conclude the proof of the theorem.
\end{proof}

The problem of finding an orientation along with an arc-$\ell$-color-avoiding $r$-rooted $k$-arc- or $k$-vertex-connected coloring with a given number of colors for $k \ge 2$ remains open.

Let us now turn to the case of finding an orientation with a vertex-$\ell$-color-avoiding strongly $k$-arc-connected coloring.

\begin{theorem} \label{thm:col&orient_vertex_strongly_kedge}
 Let $G$ be a graph, and let $c, k, \ell \in \mathbb{Z}_+$. Then the edges of $G$ can be oriented so that the obtained digraph has a vertex-$\ell$-color-avoiding strongly $k$-arc-connected coloring with at most $c$ colors if and only if $G$ is $2k$-edge-connected. Moreover, we can find such an orientation along with a vertex-$\ell$-color-avoiding $r$-rooted $k$-arc-connected coloring (if exists) in polynomial time.
\end{theorem}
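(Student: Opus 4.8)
The plan is to observe that the coloring requirement here is essentially vacuous, so the whole statement collapses to the classical orientation theorem of Nash-Williams. First I would recall from Proposition~\ref{prop:vertexcoloring} and Corollary~\ref{cor:vertexmincolor} that a digraph $D$ admits a vertex-$\ell$-color-avoiding strongly $k$-arc-connected coloring if and only if $D$ is itself strongly $k$-arc-connected, and moreover that a \emph{single} color always suffices. Indeed, if we color every vertex of $D$ with one common color, then the only color classes one may delete are the empty set and the entire vertex set, so after removing the vertices of at most $\ell$ colors the remaining digraph is either $D$ (when we delete nothing) or the empty digraph (when we delete the single color). The former is strongly $k$-arc-connected by assumption and the latter has at most one vertex, so both outcomes are admissible. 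Since $c \ge 1$, this shows that $D$ has a vertex-$\ell$-color-avoiding strongly $k$-arc-connected coloring with at most $c$ colors precisely when $D$ is strongly $k$-arc-connected, irrespective of the actual values of $c$ and $\ell$.

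Given this, the orientation question decouples completely from the coloring: the edges of $G$ can be oriented so that the resulting digraph admits the desired coloring if and only if $G$ admits \emph{some} strongly $k$-arc-connected orientation. At this point I would invoke the theorem of Nash-Williams~\cite{article:strongly_k-arc-connected_orientations} quoted at the start of this section, which asserts that $G$ has a strongly $k$-arc-connected orientation if and only if $G$ is $2k$-edge-connected. Chaining the two equivalences yields the claimed characterization. For the algorithmic part I would first test $2k$-edge-connectivity; if it holds, I would compute a strongly $k$-arc-connected orientation in polynomial time using the known algorithms (see e.g.~\cites{frank3,gabow2,iwata}), and then simply assign the same single color to every vertex, which by the first paragraph is a valid vertex-$\ell$-color-avoiding strongly $k$-arc-connected coloring.

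Since the argument is a short composition of results already established, there is no genuine difficulty; the only thing that needs care is the handling of the degenerate small cases in the definition (the ``at most one vertex'' clause, and in particular the empty digraph left after deleting the single color class), as this is exactly what legitimizes the monochromatic coloring. It is worth flagging that this is also the conceptual reason the present problem is tractable while the internally vertex-color-avoiding analogue (Theorem~\ref{thm:IVCA_orientability_strong}) is NP-complete: for ordinary vertex-color-avoidance, a monochromatic coloring trivializes the color constraint, whereas the internal-vertex variant cannot be satisfied in this way and genuinely interacts with the orientation.
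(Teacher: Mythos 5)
Your proposal is correct and follows essentially the same route as the paper, which likewise derives the statement directly from Nash-Williams' orientation theorem, Corollary~\ref{cor:vertexmincolor} (the monochromatic coloring), and the known polynomial-time algorithms for strongly $k$-arc-connected orientations. The only difference is that you spell out explicitly why the single-color coloring is admissible (via the ``at most one vertex'' clause), a detail the paper leaves implicit inside Proposition~\ref{prop:vertexcoloring} and Corollary~\ref{cor:vertexmincolor}.
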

\begin{proof}
 The theorem directly follows from the theorem of Nash-Williams~\cite{article:strongly_k-arc-connected_orientations}, from Corollary~\ref{cor:vertexmincolor} and from the fact that a strongly $k$-arc-connected orientation can be found in polynomial time~\cites{frank3,gabow2,iwata}.
\end{proof}

Now we consider the case of finding an orientation with a vertex-$\ell$-color-avoiding strongly $k$-vertex-connected coloring.

\begin{theorem} \label{thm:col&orient_vertex_rooted_kedge}
 Let $G$ be a graph, and let $c, k, \ell \in \mathbb{Z}_+$. Then the edges of $G$ can be oriented so that the obtained digraph has a vertex-$\ell$-color-avoiding strongly $k$-vertex-connected coloring with at most $c$ colors if and only if $G$ has a strongly $k$-vertex-connected orientation. If $k = 1$, then we can find such an orientation along with a vertex-$\ell$-color-avoiding strongly $k$-vertex-connected coloring (if exists) in polynomial time. If, however, $k \ge 3$, then finding such an orientation along with a vertex-$\ell$-color-avoiding strongly $k$-vertex-connected coloring (if exists) is NP-hard.
\end{theorem}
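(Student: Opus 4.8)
The plan is to reduce the whole statement to the two facts already established for vertex-colorings of digraphs, namely Proposition~\ref{prop:vertexcoloring} (which says that a digraph admits a vertex-$\ell$-color-avoiding strongly $k$-vertex-connected coloring exactly when it is strongly $k$-vertex-connected) and Corollary~\ref{cor:vertexmincolor} (which says that a single color then suffices). Together these show that, once an orientation is in hand, the coloring requirement contributes nothing beyond the connectivity of the oriented digraph.

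First I would prove the characterization. For the ``only if'' direction, suppose an orientation $D$ of $G$ carries a vertex-$\ell$-color-avoiding strongly $k$-vertex-connected coloring with at most $c$ colors; then by Proposition~\ref{prop:vertexcoloring} (in the strongly $k$-vertex-connected case) the existence of such a coloring already forces $D$ to be strongly $k$-vertex-connected, so $D$ is a strongly $k$-vertex-connected orientation of $G$. For the ``if'' direction, take any strongly $k$-vertex-connected orientation $D$ of $G$; by Corollary~\ref{cor:vertexmincolor} it admits a vertex-$\ell$-color-avoiding strongly $k$-vertex-connected coloring with a single color, and since $c \ge 1$ this is a coloring with at most $c$ colors. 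This establishes the equivalence and, importantly, shows that the coloring is essentially free: colouring every vertex identically always works.

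Next I would handle $k = 1$. Since strongly $1$-vertex-connectivity coincides with strong connectivity, Robbins' theorem~\cite{article:robbins} tells us that $G$ has a strongly connected orientation if and only if $G$ is $2$-edge-connected, and such an orientation can be produced in polynomial time (for instance via an ear decomposition). Having fixed it, I would colour all vertices with one color, which by the previous paragraph yields the required coloring; the whole procedure runs in polynomial time. For $k \ge 3$ I would instead invoke the result of Durand de Gevigney~\cite{article:durand_de_gevigney} that deciding whether a given graph has a strongly $k$-vertex-connected orientation is NP-complete. By the characterization above, $G$ can be oriented to carry a vertex-$\ell$-color-avoiding strongly $k$-vertex-connected coloring with at most $c$ colors if and only if $G$ has a strongly $k$-vertex-connected orientation, so any algorithm producing such an orientation-and-coloring pair (or certifying that none exists) would solve this NP-complete problem; hence the task is NP-hard.

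Since every ingredient is quoted, there is no serious analytic obstacle here; the only point requiring care is the bookkeeping that the coloring constraint collapses to the pure orientation problem, which is exactly what Proposition~\ref{prop:vertexcoloring} and Corollary~\ref{cor:vertexmincolor} deliver. I would also flag that the case $k = 2$ is intentionally omitted, consistent with the earlier remark that the complexity of \emph{finding} a strongly $2$-vertex-connected orientation (via Thomassen's characterization) is open, so neither a polynomial algorithm nor an NP-hardness claim is asserted there.
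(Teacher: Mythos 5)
Your proposal is correct and follows essentially the same route as the paper: the characterization collapses to Proposition~\ref{prop:vertexcoloring} and Corollary~\ref{cor:vertexmincolor} (a single color always suffices once a strongly $k$-vertex-connected orientation exists), the $k=1$ case reduces to finding a strongly connected orientation in polynomial time, and the $k\ge 3$ case follows from Durand de Gevigney's NP-hardness result. You merely spell out a few steps the paper leaves implicit (e.g.\ citing Robbins directly rather than routing through Theorem~\ref{thm:col&orient_vertex_strongly_kedge}), which does not change the argument.
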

\begin{proof}
 The first statement directly follows from Corollary~\ref{cor:vertexmincolor}.
 Observe, that the case $k=1$ was already settled in Theorem~\ref{thm:col&orient_vertex_strongly_kedge}. Finally, the statement about the case $k \ge 3$ follows from the fact that finding a strongly $k$-vertex-connected orientation for $k \ge 3$ is NP-hard~\cite{article:durand_de_gevigney}.
\end{proof}

The problem of finding an orientation along with a vertex-$\ell$-color-avoiding strongly 2-vertex-connected coloring with a given number of colors remains open.

Next, let us consider the case of finding an orientation with a vertex-$\ell$-color-avoiding $r$-rooted $k$-arc- or $k$-vertex-connected coloring.

\begin{theorem}
 Let $G$ be a graph with a given vertex $r \in V(G)$, and let $c, k, \ell \in \mathbb{Z}_+$. Then the edges of $G$ can be oriented so that the obtained digraph has a vertex-$\ell$-color-avoiding $r$-rooted $k$-arc-connected coloring with at most $c$ colors if and only if $G$ has an $r$-rooted $k$-arc-connected orientation. Moreover, we can find such an orientation along with a vertex-$\ell$-color-avoiding $r$-rooted $k$-arc-connected coloring (if exists) in polynomial time.
\end{theorem}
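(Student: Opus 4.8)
The plan is to recognize that this statement is essentially a direct corollary of the characterization of vertex-color-avoiding rooted connected colorings together with the corresponding minimum-color count, combined with the known polynomial-time solvability of the uncolored rooted orientation problem. First I would strip away the coloring requirement entirely. By Corollary~\ref{cor:vertexmincolor} (the rooted case), whenever a digraph $D$ with root $r$ is $r$-rooted $k$-arc-connected, it already admits a vertex-$\ell$-color-avoiding $r$-rooted $k$-arc-connected coloring using a single color; since $c \in \mathbb{Z}_+$ means $c \ge 1$, the constraint ``with at most $c$ colors'' is never binding, and the actual value of $c$ plays no role. Consequently, the condition that $G$ can be oriented so that the resulting digraph carries such a coloring with at most $c$ colors is equivalent to the condition that $G$ can be oriented so that the resulting digraph is $r$-rooted $k$-arc-connected, i.e.\ that $G$ has an $r$-rooted $k$-arc-connected orientation.

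I would then argue the two directions explicitly. For the forward implication, suppose $G$ is oriented to a digraph $D$ admitting a vertex-$\ell$-color-avoiding $r$-rooted $k$-arc-connected coloring. By Proposition~\ref{prop:vertexcoloring} (the rooted case), the existence of such a coloring forces $D$ to be $r$-rooted $k$-arc-connected, so $G$ indeed has an $r$-rooted $k$-arc-connected orientation. For the backward implication, starting from an $r$-rooted $k$-arc-connected orientation $D$ of $G$, I assign one and the same color to every vertex; by Corollary~\ref{cor:vertexmincolor} this single-color assignment is a valid vertex-$\ell$-color-avoiding $r$-rooted $k$-arc-connected coloring, and it trivially uses at most $c$ colors.

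Finally, for the algorithmic claim I would invoke that a rooted $k$-arc-connected orientation of $G$ can be found, or shown not to exist, in polynomial time (via the combination of the Tutte--Nash-Williams spanning-tree packing characterization with Theorem~\ref{thm:rootededmonds}, cf.~\cites{frank2,gabow}); once such an orientation is in hand, coloring all vertices with a single color produces the required coloring in linear time, so the whole construction runs in polynomial time.

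I expect no genuine obstacle in carrying this out, since the theorem reduces cleanly to prior results. The only point deserving explicit care is the observation that the parameter $c$ is irrelevant: because one color always suffices, the color budget never constrains the problem, and the equivalence collapses to the purely orientational question already resolved by the cited results. I would state this remark plainly so that the reader sees why no $c$-dependence appears in the characterization.
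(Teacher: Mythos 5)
Your proposal is correct and follows essentially the same route as the paper, which also derives the theorem directly from Corollary~\ref{cor:vertexmincolor} (via the single-color coloring, making $c$ irrelevant) together with the polynomial-time algorithms for finding an $r$-rooted $k$-arc-connected orientation~\cites{frank2,gabow}. You merely spell out the two directions and the role of Proposition~\ref{prop:vertexcoloring} more explicitly than the paper does.
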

\begin{proof}
 The theorem directly follows from Corollary~\ref{cor:vertexmincolor} and from the fact that an $r$-rooted $k$-arc-connected orientation can be found in polynomial time~\cites{frank2,gabow}.
\end{proof}

\begin{theorem}
 Let $G$ be a graph, and let $c, k, \ell \in \mathbb{Z}_+$. Then the edges of $G$ can be oriented so that the obtained digraph has a vertex-$\ell$-color-avoiding $r$-rooted $k$-vertex-connected coloring with at most $c$ colors if and only if $G$ has an $r$-rooted $k$-vertex-connected orientation. If $k \le 2$, then we can find such an orientation along with a vertex-$\ell$-color-avoiding $r$-rooted $k$-vertex-connected coloring (if exists) in polynomial time. If, however, $k \ge 3$, then finding such an orientation along with a vertex-$\ell$-color-avoiding $r$-rooted $k$-vertex-connected coloring (if exists) is NP-hard.
\end{theorem}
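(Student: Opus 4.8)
The plan is to reduce the simultaneous orientation-and-coloring problem entirely to the pure orientation problem, exploiting the fact that once a suitable orientation is available, a single color always suffices. First I would establish the stated characterization. For the ``if'' direction, suppose $G$ admits an $r$-rooted $k$-vertex-connected orientation $D$. By Corollary~\ref{cor:vertexmincolor}, the minimum number of colors in a vertex-$\ell$-color-avoiding $r$-rooted $k$-vertex-connected coloring of such a digraph is $1$, so coloring every vertex of $D$ with the same color yields a valid coloring; since $c \ge 1$, this uses at most $c$ colors. For the ``only if'' direction, suppose some orientation $D$ of $G$ admits a vertex-$\ell$-color-avoiding $r$-rooted $k$-vertex-connected coloring. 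Taking the empty family of colors to delete in the definition (removing the vertices of zero colors) forces $D$ itself to be $r$-rooted $k$-vertex-connected, so $G$ has the required orientation. This gives the equivalence.

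Next I would settle the algorithmic claims by pairing the known orientation results quoted in Section~\ref{section:orientations} with the coloring step above. For $k = 1$, an $r$-rooted connected orientation exists precisely when $G$ is connected and can be produced in polynomial time (orient a spanning tree away from $r$ and orient the remaining edges arbitrarily); for $k = 2$, an $r$-rooted $2$-vertex-connected orientation can be found in polynomial time by the cited results~\cites{article:whitty, thesis:plehn, article:cheriyan}. In either case, after computing the orientation I color with a single color, which is valid by Corollary~\ref{cor:vertexmincolor}; this yields the claimed polynomial-time algorithm for $k \le 2$. For $k \ge 3$, the equivalence reduces the decision version of our problem to deciding the existence of an $r$-rooted $k$-vertex-connected orientation of $G$, which Durand de Gevigney proved NP-hard~\cite{thesis:de_gevigney}; hence finding an orientation together with a valid coloring is NP-hard as well. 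The argument is a direct analogue of the proof of Theorem~\ref{thm:col&orient_vertex_rooted_kedge}, with the strong-connectivity orientation results replaced by their rooted counterparts.

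There is no deep obstacle here, as the statement decouples cleanly into an orientation question and a (trivial) coloring question. The only points requiring care are in the ``only if'' direction: I must confirm that deleting zero colors is an admissible choice in the definition and that, for graphs on at least two vertices, this does not collapse into the degenerate ``at most one vertex'' escape clause, so that the underlying uncolored digraph is genuinely forced to be $r$-rooted $k$-vertex-connected. The one-vertex case is handled separately and trivially. Beyond this, the proof is bookkeeping together with the invocation of the quoted orientation results.
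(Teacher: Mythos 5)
Your proposal is correct and takes essentially the same route as the paper: both directions of the characterization come from Corollary~\ref{cor:vertexmincolor} (a single color suffices once an $r$-rooted $k$-vertex-connected orientation exists), the $k \le 2$ cases use the cited polynomial-time rooted orientation algorithms, and the $k \ge 3$ hardness is inherited from Durand de Gevigney's result. Your explicit handling of the empty set of deleted colors and the one-vertex escape clause only spells out what the paper leaves implicit.
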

\begin{proof}
 The first statement directly follows from Corollary~\ref{cor:vertexmincolor}.
 Observe, that the case $k=1$ was already settled in Theorem~\ref{thm:col&orient_vertex_rooted_kedge}. The statement about the case $k=2$ directly follows from Corollary~\ref{cor:vertexmincolor} and from the fact that an $r$-rooted 2-vertex-connected orientation can be found in polynomial time~\cites{article:whitty, thesis:plehn, article:cheriyan}. Finally, the statement about the case $k \ge 3$ follows from the fact that finding an $r$-rooted $k$-vertex-connected orientation for $k \ge 3$ is NP-hard~\cite{thesis:de_gevigney}.
\end{proof}

Now let us consider the case of finding an orientation with an internally vertex-$\ell$-color-avoiding strongly $k$-arc- or $k$-vertex-connected coloring.

\begin{theorem}
 Let $G$ be a graph and let $c, k, \ell \in \mathbb{Z}_+$. Deciding whether the edges of $G$ can be oriented so that the obtained digraph has an internally vertex-$\ell$-color-avoiding strongly $k$-arc- or $k$-vertex-connected coloring with at most $c$ colors is NP-complete. This problem remains NP-complete even if $k = \ell = 1$ and $c = 2$.
\end{theorem}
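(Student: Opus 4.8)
The plan is to prove membership in NP in the usual way and to establish NP-hardness already in the restricted case $k = \ell = 1$ and $c = 2$ by reducing from the problem of Theorem~\ref{thm:ivckecvc}, that is, from deciding whether an undirected graph admits an internally vertex-$1$-color-avoiding $1$-connected coloring with at most two colors (recall that for $k=\ell=1$ the $k$-edge-connected version there coincides with internally vertex-$1$-color-avoiding $1$-connectivity). The problem at hand is clearly in NP, a certificate being an orientation of the edges together with a vertex-coloring of the obtained digraph. Note also that for $k=1$ the strongly $1$-arc- and strongly $1$-vertex-connected variants coincide, so it suffices to treat the single notion of internally vertex-$1$-color-avoiding strong $1$-connectivity.

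For the reduction, given an instance $G$ of the problem in Theorem~\ref{thm:ivckecvc}, I would construct the graph $H$ obtained from $G$ by replacing every edge with two parallel edges. This doubling is the crucial device: it decouples the orientation choice from the coloring choice, since a pair of parallel edges can always be oriented in opposite directions so as to simulate an undirected edge, while no orientation of a parallel pair can do better. I claim that the edges of $H$ can be oriented and its vertices colored with two colors so that the resulting digraph is internally vertex-$1$-color-avoiding strongly $1$-connected if and only if $G$ admits an internally vertex-$1$-color-avoiding $1$-connected coloring with two colors. Since $H$ can clearly be built in polynomial time, this equivalence completes the hardness proof.

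For the forward implication I would take a good two-coloring of $G$, use the same coloring on the vertices of $H$, and orient each pair of parallel edges in opposite directions, so that the orientation of $H$ is exactly the bidirected version of $G$. Then every undirected $u$-$v$ path in $G$ whose internal vertices avoid a prescribed color yields both a $u$-$v$ and a $v$-$u$ color-avoiding dipath in $H$, giving internally vertex-$1$-color-avoiding strong $1$-connectivity (equivalently, no monochromatic, possibly empty, vertex cut, by Proposition~\ref{proposition:equiv_def_of_IVCA}). For the converse I would start from any orientation and two-coloring of $H$ witnessing internally vertex-$1$-color-avoiding strong $1$-connectivity and keep the same coloring on $G$; for any two vertices $u,v$ and any single color, the guaranteed color-avoiding $u$-$v$ dipath in $H$ projects, after forgetting orientations and identifying parallel copies, to an undirected $u$-$v$ walk in $G$ whose internal vertices avoid that color, and shortcutting it to a simple path preserves this property, so $G$ is internally vertex-$1$-color-avoiding $1$-connected by Proposition~\ref{proposition:equiv_def_of_IVCA}.

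I expect the main obstacle to be the converse direction, specifically making the projection argument airtight: one must verify that forgetting orientations and contracting parallel edges sends a color-avoiding dipath to a genuine color-avoiding path of $G$, that the extracted simple path only drops internal vertices (hence retains the color-avoidance), and that no information about the two-coloring is lost, since the vertex set of $H$ is precisely that of $G$. The forward direction, the reduction to the single notion at $k=1$, and the polynomiality of the construction are all routine.
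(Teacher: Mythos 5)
Your proposal is correct and takes essentially the same approach as the paper: the paper also reduces from Theorem~\ref{thm:ivckecvc} by adding a parallel edge to each edge of $G$ and observing that opposite orientations of each parallel pair simulate the undirected graph. Your write-up merely fills in the projection/shortcutting details that the paper dismisses as ``not difficult to see.''
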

\begin{proof}
 The problem is clearly in NP, and to show that it is NP-hard even if $k = \ell = 1$ and $c = 2$, we reduce the NP-complete problem presented in Theorem~\ref{thm:ivckecvc} to it. Let $G$ be an arbitrary graph. Let $G'$ be the graph that can be obtained from $G$ by adding a parallel edge to each of its edges. Then it is not difficult to see that $G$ has an internally vertex-$\ell$-color-avoiding strongly $k$-arc- or $k$-vertex-connected coloring with at most $c$ colors if and only if the edges of $G'$ can be oriented so that the obtained digraph has an internally vertex-$\ell$-color-avoiding strongly $k$-arc- or $k$-vertex-connected coloring with at most $c$ colors.
\end{proof}

Finally, let us study the case of finding an orientation with an internally vertex-$\ell$-color-avoiding $r$-rooted $k$-arc- or $k$-vertex-connected coloring.

\begin{theorem}
 Let $G$ be a graph with a given vertex $r \in V(G)$ and let $c, k, \ell \in \mathbb{Z}_+$. Deciding whether the edges of $G$ can be oriented so that the obtained digraph has an internally vertex-$\ell$-color-avoiding $r$-rooted $k$-arc- or $k$-vertex-connected coloring with at most $c$ colors is NP-complete. This problem remains NP-complete even if $k = \ell = 1$ and $c = 2$.
\end{theorem}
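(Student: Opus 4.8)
The plan is to prove membership in NP and then NP-hardness by reducing from the rooted coloring problem of Theorem~\ref{thm:rooted_iv_mincolor}, specialising to $k=\ell=1$ and $c=2$. Membership is routine, since a guessed orientation together with a vertex-coloring using at most $c$ colors can be checked. For the hardness, let $D$ (with root $r$) be an instance of Theorem~\ref{thm:rooted_iv_mincolor}; inspecting the construction in that proof, I may assume that the only arcs of $D$ outside a bidirectional pair are the out-arcs $rv$ of the root, and that $r$ has no in-arcs. I would then form an undirected graph $G$ on the vertex set $V(D)$ by replacing every bidirectional pair of arcs, and also every root arc $rv$, by two parallel undirected edges --- so $G$ is the underlying simple graph $U$ of $D$ with each edge doubled --- retaining $r$ as the distinguished vertex. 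As the vertex sets of $G$ and $D$ coincide, colorings transfer verbatim.

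The goal is to show that $G$ can be oriented and coloured with two colors to be internally vertex-$1$-color-avoiding $r$-rooted $1$-connected if and only if $D$ has such a coloring with two colors. For the ``if'' direction, given a good coloring $f$ of $D$, I orient the two copies of each edge of $G$ in opposite directions; this yields precisely the bidirection $D^{\mathrm{bi}}$ of $D$, and since $D^{\mathrm{bi}}\supseteq D$ every $r$-rooted dipath of $D$ avoiding the internal vertices of a prescribed color survives, so the oriented coloured $G$ has the required property. For the ``only if'' direction, I take any witnessing orientation $\vec G$ and coloring $f$: each dipath from $r$ in $\vec G$ is an undirected $r$-path in $G$, hence an $r$-path avoiding the same internal colors in $U$. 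Such a path meets $r$ only at its start, so it leaves along a root edge --- forward-oriented $r\to v$ in $D$ --- and thereafter uses only edges that are bidirectional in $D$; therefore it is also a dipath in $D$, and $f$ witnesses the property for $D$.

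The main obstacle, and the point where this differs from the strongly connected case of the preceding theorem, is the directional asymmetry of the root arcs: an arbitrary orientation of $G$ need not coincide with $D^{\mathrm{bi}}$. Two observations resolve it. First, passing from $D$ to $D^{\mathrm{bi}}$ is harmless in the ``if'' direction because the added arcs point into $r$, and a simple dipath starting at $r$ never re-enters $r$, so reachability from $r$ is unchanged. Second, in the ``only if'' direction a simple $r$-path in $U$ automatically respects the orientation of $D$, precisely because it touches the root only at its start. Finally, I would observe that at $k=1$ the arc- and vertex-connected variants coincide (both amount to rooted connectivity), so the one construction covers both; and as $c,k,\ell$ are part of the input, NP-hardness of the case $k=\ell=1$, $c=2$ yields NP-hardness of the general problem.
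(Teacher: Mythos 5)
Your proof is correct. The paper proves this theorem by redoing the reduction from 2-colorability of loopless hypergraphs: it takes the graph $G'$ from the proof of Theorem~\ref{thm:ivckecvc}, doubles its edges, attaches a root $r$ by single edges to the vertices corresponding to $V(\mathcal{H})$, and argues both directions directly in terms of the hypergraph coloring (the backward direction is handled by a ``we may reorient the edges'' step). You instead factor the argument through Theorem~\ref{thm:rooted_iv_mincolor}: you observe that the digraphs $D$ produced there have all arcs in bidirectional pairs except the out-arcs of $r$, you double the edges of the underlying graph, and you show the orientation is then immaterial. This is exactly the device the paper itself uses for the strongly connected case (the theorem immediately preceding this one), and your two observations --- that adding the reverse root arcs cannot hurt reachability from $r$, and that a simple $r$-path necessarily traverses its unique root edge in the forward direction --- are precisely what is needed to push that device through the rooted, asymmetric setting. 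Your version is arguably tighter on the one point the paper glosses over, namely why an arbitrary witnessing orientation can be assumed to respect the intended arc directions: you avoid the reorientation step entirely by projecting the witnessing dipaths directly into $D$. Two cosmetic caveats: your reduction is really a composition (hypergraph 2-colorability $\to$ the instances built in Theorem~\ref{thm:rooted_iv_mincolor} $\to$ your $G$), so it should be stated as such rather than as a reduction from the general problem of that theorem; and, like the paper, you only wave at NP membership, which for $\ell$ large relative to $c$ deserves a word --- though this is not a defect relative to the paper's own standard.
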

\begin{proof}
 The problem is clearly in NP. Now we show that the problem is NP-hard even for $k = \ell = 1$ and $c=2$ by reducing the NP-complete problem of recognizing 2-colorable (loopless) hypergraphs~\cite{article:hypergraph_2-colorability} to it.
 
 Let $\mathcal{H}$ be an arbitrary hypergraph without loops and let $G'$ be the same graph defined in the proof of Theorem~\ref{thm:ivckecvc}. Let $G$ be the graph that can be obtained from $G'$ as follows. Add a parallel edge to each edge of $G$, and then add a new vertex $r$ to so obtained graph and connect it to every vertex $v \in V(G')$ corresponding to a vertex of $V(\mathcal{H})$.
 
 Assume first that $\mathcal{H}$ has a proper 2-vertex-coloring with red and blue colors. Let us orient the parallel pairs of edges forward and backward, and let us orient every edge incident to $r$ away from $r$. Also, let us use the 2-vertex-coloring of $\mathcal{H}$ for the corresponding vertices in $G$, and color all the other vertices of $G$ red. Then by Proposition~\hyperlink{item:iii_of_prop_IVCA}{\ref*{proposition:equiv_def_of_IVCA}.\ref*{item:iii_of_IVCA}}, it is not difficult to see that this yields an internally vertex-1-color-avoiding $r$-rooted 1-connected digraph.

 Now assume that the edges of $G$ can be oriented so that the obtained digraph has an internally vertex-1-color-avoiding $r$-rooted 1-connected coloring with red and blue colors. We can assume that the parallel pairs of edges are oriented forward and backward and that every edge incident to $r$ is oriented away from~$r$ (otherwise we can reorient the edges). Then assigning the same colors of the corresponding vertices in $\mathcal{H}$ (regardless the colors of the remaining vertices of $G$) yields a proper 2-vertex-coloring of $\mathcal{H}$.
\end{proof}

\section{Conclusions}

In this work, we characterized the graphs that admit color-avoiding connected colorings, and studied the problem of determining such colorings using as few colors as possible. We further examined the problem of orienting the edges of a colored graph --- or simultaneously orienting and coloring an uncolored one --- so as to achieve color-avoiding strong or rooted connectivity. Our results also extend to $k$-edge- and $k$-vertex-connectivity, as well as to settings where multiple colors must be simultaneously avoided. In some cases, we generalized the framework to matroids.

Several cases remain open. In particular, the complexity of finding an edge-$\ell$-color-avoiding connected coloring with minimum number of colors is unresolved for $\ell \ge 2$. Similarly, determining the minimum number of colors needed for arc-$\ell$-color-avoiding $r$-rooted $k$-arc- or $k$-vertex-connected colorings remains open for $k \ge 2$ and $\ell \in \mathbb{Z}_+$. It is also open whether an orientation along with a vertex-$\ell$-color-avoiding strongly 2-vertex-connected coloring can be efficiently found for $\ell \in \mathbb{Z}_+$. Finally, it also remains open whether the problems in Theorems~\ref{thm:ECA_orientability_strong}--\ref{thm:IVCA_orientability_rooted} are still NP-complete when the number of colors is constant, in particular, three.

\paragraph{Acknowledgement.} The authors would like to express their gratitude to Roland Molontay for useful conversations and for his support during the research process. The authors are also grateful to Krist\'{o}f B\'{e}rczi for his comments regarding the manuscript. 
J\'{o}zsef Pint\'{e}r was supported by the European Union project RRF-2.3.1–21-2022–00004 within the framework of the Artificial Intelligence National Laboratory. Pint\'{e}r was also funded by Project No.\ KDP-IKT-2023-900-I1-00000957/0000003 with support provided by the Ministry of Culture and Innovation of Hungary from the National Research, Development and Innovation Fund, financed under the KDP-2023 funding scheme.
The research of Kitti Varga was supported by the Ministry of Innovation and Technology of Hungary from the National Research, Development and Innovation Fund -- grant ADVANCED 150556.

\begin{bibdiv}
\begin{biblist}
\bib{article:AharoniBerger_1}{article}{
 title={The intersection of a matroid and a simplicial complex},
 author={R.~Aharoni},
 author={E.~Berger},
 journal={Transactions of the American Mathematical Society},
 volume={358},
 issue={11},
 pages={4895--4917},
 year={2006},
}

\bib{article:AharoniBerger_2}{article}{
 title={The edge covering number of the intersection of two matroids},
 author={R.~Aharoni},
 author={E.~Berger},
 author={R. Ziv},
 journal={Discrete Mathematics},
 volume={312},
 issue={1},
 pages={81--85},
 year={2012},
}

\bib{article:min_spanning_trees}{article}{
 title={Finding Well-Balanced Pairs of Edge-Disjoint Trees in Edge-Weighted Graphs},
 author={J.~Bang-Jensen},
 author={D.~Gon\c{c}alves},
 author={I.~L.~G\o{}rtz},
 journal={Discrete Optimization},
 volume={4},
 issue={3--4},
 pages={334--348},
 year={2007},
}

\bib{book:decomposing_into_two_strong_spanning_subdigraphs}{book}{
 title={Digraphs: Theory, Algorithms and Applications},
 author={J.~Bang-Jensen},
 author={G.~Z.~Gregory},
 series={Springer Monographs in Mathematics},
 edition={Second Edition},
 publisher={Springer},
 year={2008},
}

\bib{article:BercziSchwarcz}{article}{
 title={Complexity of packing common bases in matroids},
 author={K.~B\'{e}rczi},
 author={T.~Schwarcz},
 journal={Mathematical Programming},
 volume={188},
 issue={3--4},
 pages={1--18},
 year={2021},
}

\bib{article:bhalgat}{inproceedings}{
 title={Fast edge splitting and Edmonds’ arborescence construction for unweighted graphs},
 author={A.~Bhalgat},
 author={R.~Hariharan},
 author={T.~Kavitha},
 author={D.~Panigrahi},
 booktitle={Proceedings of the Nineteenth Annual ACM-SIAM Symposium on Discrete Algorithms},
 series={SODA '08},
 pages={455--464},
 year={2008},
}

\bib{article:cheriyan}{article}{
 title={Directed $s$-$t$ numberings, rubber bands, and testing digraph $k$-vertex connectivity},
 author={J.~Cheriyan},
 author={J.~H.~Reif},
 journal={Combinatorica},
 volume={14},
 issue={4},
 pages={435--451},
 year={1994},
}

\bib{cdp07}{article}{
  title={Shared risk resource group: Complexity and approximability Issues},
  author={D.~Coudert},
  author={P.~Datta},
  author={S.~P\'erennes},
  author={H.~Rivano},
  author={M.-E.~Voge},
  journal={Parallel Processing Letters},
  volume={17},
  issue={2},
  pages={169--184},
  year={2007},
}

\bib{coudert16}{article}{
  title={Combinatorial optimization in networks with shared risk link groups},
  author={D.~Coudert},
  author={S.~P\'erennes},
  author={H.~Rivano},
  author={M.-E.~Voge},
  journal={Discrete Mathematics \& Theoretical Computer Science},
  volume={18},
  issue={3},
  articlenumber={8},
  volumeadditionalinformation={Distributed Computing and Networking},
  year={2016},
}

\bib{coullard1996independence}{article}{
 title={Independence and port oracles for matroids, with an application to computational learning theory},
 author={C.~R.~Coullard},
 author={L.~Hellerstein},
 journal={Combinatorica},
 volume={16},
 issue={2},
 pages={189--208},
 year={1996},
}

\bib{Darmann2020Simple}{article}{
 title={On a simple hard variant of \textsc{Not-All-Equal 3-Sat}},
 author={A.~Darmann},
 author={J.~D\"{o}cker},
 journal={Theoretical Computer Science},
 volume={815},
 pages={147--152},
 year={2020},
}

\bib{article:courteous}{article}{
 title={Cut coloring and circuit covering},
 author={M.~DeVos},
 author={T.~Johnson},
 author={P.~Seymour},
 eprint={\url{https://web.math.princeton.edu/~pds/papers/cutcolouring/paper.pdf}},
 year={2006},
}

\bib{thesis:de_gevigney}{thesis}{
 title={Graphs orientations: structures and algorithms},
 author={O.~Durand de Gevigney},
 type={PhD Thesis},
 organization={Universit\'{e} de Grenoble},
 eprint={\url{https://theses.hal.science/tel-00989808/document}},
 year={2014},
}

\bib{article:durand_de_gevigney}{article}{
 title={On Frank's conjecture on $k$-connected orientations},
 author={O.~Durand de Gevigney},
 journal={Journal of Combinatorial Theory, Series B},
 volume={141},
 pages={105--114},
 year={2020},
}

\bib{article:rootededmonds}{inproceedings}{
 title={Edge-disjoint branchings},
 author={J.~Edmonds},
 booktitle={Combinatorial Algorithms},
 publisher={Academic Press},
 pages={91--96},
 year={1973},
}

\bib{article:matroid_chromatic_number}{article}{
 title={Minimum partition of a matroid into independent subsets},
 author={J.~Edmonds},
 journal={Journal of Research of the National Bureau of Standards, Section B: Mathematics and Mathematical Physics},
 volume={69B},
 issue={1--2},
 pages={67--72},
 year={1965},
}

\bib{fomin24}{inproceedings}{
  author={F.~V.~Fomin},
  author={P.~A.~Golovach},
  author={T.~Korhonen},
  author={D.~Lokshtanov},
  author={S.~Saurabh},
  title     = {Fixed-parameter tractability of hedge cut},
  booktitle = {Proceedings of the 2025 Annual ACM-SIAM Symposium on Discrete Algorithms (SODA)},
  pages     = {1402--1411},
  year      = {2025}
}

\bib{frank3}{article}{
 title={An algorithm for submodular functions on graphs},
 author={A.~Frank},
 journal={North-Holland Mathematics Studies},
 volume={66},
 pages={97--120},
 year={1982},
}

\bib{frank2}{article}{
 title={On the orientation of graphs}, 
 author={A.~Frank},
 journal={Journal of Combinatorial Theory, Series B},
 volume={28},
 issue={3},
 pages={251--261},
 year={1980}
}

\bib{gabow2}{inproceedings}{
 title={A framework for cost-scaling algorithms for submodular flow problems}, 
 author={H.~N.~Gabow},
 booktitle={Proceedings of 1993 IEEE 34th Annual Foundations of Computer Science},
 pages={449–-458},
 publisher={IEEE},
 year={1993},
}

\bib{gabow}{article}{
 title={Packing algorithms for arborescences (and spanning trees) in capacitated graphs}, 
 author={H.~N.~Gabow},
 author={K.~S.~Manu},
 journal={Mathematical Programming},
 volume={82},
 pages={83--109},
 year={1998},
}

\bib{gkp17}{inproceedings}{
  title={Random contractions and sampling for hypergraph and hedge connectivity},
  author={M.~Ghaffari},
  author={D.~R.~Karger},
  author={D.~Panigrahi},
  booktitle={Proceedings of the 28th Annual ACM-SIAM Symposium on Discrete Algorithms (SODA)},
  pages={1101--1114},
  year={2017},
}

\bib{hausmann1981algorithmic}{incollection}{
 title={Algorithmic versus axiomatic definitions of matroids},
 author={D.~Hausmann},
 author={B.~Korte},
 booktitle={Mathematical Programming at Oberwolfach},
 series={Mathematical Programming Studies},
 volume={14},
 publisher={Springer},
 pages={98--111},
 year={1981},
}

\bib{article:huck}{article}{
 title={Disproof of a conjecture about independent branchings in $k$-connected directed graphs},
 author={A.~Huck},
 journal={Journal of Graph Theory},
 volume={20},
 issue={2},
 pages={235--239},
 year={1995},
}

\bib{iwata}{article}{
 title={An algorithm for minimum cost arc-connectivity orientations},
 author={S.~Iwata},
 author={Y.~Kobayashi},
 journal={Algorithmica},
 volume={56},
 pages={437–-447},
 year={2010},
}

\bib{jaffke22}{inproceedings}{
    author={L.~Jaffke},
  author={P.~T.~Lima},
  author={T.~Masa{\v{r}}{\'\i}k},
  author={M.~Pilipczuk},
  author={U.~S.~Souza},
  title     = {A tight quasi-polynomial bound for global label min-cut},
  booktitle = {Proceedings of the 2023 Annual ACM-SIAM Symposium on Discrete Algorithms (SODA)},
  pages     = {290--303},
  publisher = {Society for Industrial and Applied Mathematics},
  year      = {2023}
}

\bib{article:physicists1}{article}{
 title={Bond and site color-avoiding percolation in scale-free networks},
 author={A.~Kadovi\'{c}},
 author={S.~M.~Krause},
 author={G.~Caldarelli},
 author={V.~Zlati\'{c}},
 journal={Physical Review E},
 volume={98},
 issue={6},
 articlenumber={062308},
 year={2018},
}

\bib{article:Hamiltonian_decompositon_1}{article}{
 title={Z teorie kone\v{c}n\'{y}ch pravideln\'{y}ch grafov tretieho a \v{s}tvrt\'{e}ho stup\v{n}a},
 author={A.~Kotzig},
 journal={\v{C}asopis Pro P\v{e}stov\'{a}n\'{i} Matematiky},
 volume={82},
 issue={1},
 pages={76--92},
 year={1957},
}

\bib{article:physicists2}{article}{
 title={Hidden connectivity in networks with vulnerable classes of nodes},
 author={S.~M.~Krause},
 author={M.~M.~Danziger},
 author={V.~Zlati\'{c}},
 journal={Physical Review X},
 volume={6},
 issue={4},
 articlenumber={041022},
 year={2016},
}

\bib{article:physicists3}{article}{
 title={Color-avoiding percolation},
 author={S.~M.~Krause},
 author={M.~M.~Danziger},
 author={V.~Zlati\'{c}},
 journal={Physical Review E},
 volume={96},
 issue={2},
 articlenumber={022313},
 year={2017},
}

\bib{article:hypergraph_2-colorability}{article}{
 title={Coverings and colorings of hypergraphs},
 author={L.~Lov\'{a}sz},
 journal={Proceedings of the 4th Southeastern Conference of Combinatorics, Graph Theory, and Computing},
 publisher={Utilitas Mathematica},
 pages={3--12},
 year={1973},
}

\bib{article:Hamiltonian_decompostion_2}{article}{
 title={Cycles Hamiltoniens dans les graphes 4-r\'{e}guliers 4-connexes},
 author={P.~Martin},
 journal={Aequationes Mathematicae},
 volume={14},
 issue={1--2},
 pages={37--40},
 year={1976},
}

\bib{article:menger}{article}{
 title={Zur allgemeinen Kurventheorie},
 author={K.~Menger},
 journal={Fundamenta Mathematicae},
 volume={10},
 issue={1},
 pages={96--115},
 year={1927},
}

\bib{article:color-avoiding_components}{inproceedings}{
 title={On the complexity of color-avoiding site and bond percolation},
 author={R.~Molontay},
 author={K.~Varga},
 booktitle={SOFSEM 2019: Theory and Practice of Computer Science},
 publisher={Springer},
 pages={354--367},
 year={2019},
}

\bib{morawietz22}{article}{
  title={Refined parameterizations for computing colored cuts in edge-colored Graphs},
  author={N.~Morawietz},
  author={N.~Gr{\"u}ttemeier},
  author={C.~Komusiewicz},
  author={F.~Sommer},
  journal={Theory of Computing Systems},
  pages={1019--1045},
  year={2022},
}

\bib{webpage:MEMMORG}{webpage}{
 author={MTA-ELTE Matroid Optimization Research Group},
 title={Partitioning the ground set of a matroid},
 url={https://berkri.web.elte.hu/memmorg/index.php/open-problems/partitioning-the-ground-set-of-a-matroid/},
 accessdate={5 September, 2025},
}

\bib{article:NashWilliams_edge_disjoint_spanning_trees}{article}{
 title={Edge-Disjoint Spanning Trees of Finite Graphs},
 author={C.~St.~J.~A.~Nash-Williams},
 journal={Journal of the London Mathematical Society},
 volume={36},
 issue={1},
 pages={445--450},
 year={1961},
}

\bib{article:strongly_k-arc-connected_orientations}{article}{
 title={On orientations, connectivity and odd-vertex-pairings in finite graphs},
 author={C.~St.~J.~A.~Nash-Williams},
 journal={Canadian Journal of Mathematics},
 volume={12},
 pages={555--567},
 year={1960},
}

\bib{article:CA_spanning_subgraphs}{article}{
 title={Color-avoiding connected spanning subgraphs with minimum number of edges},
 author={J.~Pint\'{e}r},
 author={K.~Varga},
 journal={Discrete Applied Mathematics},
 volume={349},
 pages={25--43},
 year={2024},
}

\bib{thesis:plehn}{thesis}{
 title={\"{U}ber die Existenz und das Finden von Subgraphen},
 author={J.~Plehn},
 type={PhD Thesis},
 organization={University of Bonn},
 eprint={\url{https://theses.hal.science/tel-00989808/document}},
 year={1991},
}

\bib{book:recski}{book}{
 title={Matroid Theory and its Applications in Electric Network Theory and in Statics},
 series={Algorithms and Combinatorics},
 author={A.~Recski},
 publisher={Springer},
 year={1989},
}

\bib{article:robbins}{article}{
 title={A theorem on graphs with an application to a problem of traffic control},
 author={H.~E.~Robbins},
 journal={American Mathematical Monthly},
 volume={46},
 issue={5},
 pages={281--283},
 year={1939},
}

\bib{robinson1980computational}{article}{
 title={The computational complexity of matroid properties},
 author={G.~C.~Robinson},
 author={D.~A.~Welsh},
 journal={Mathematical Proceedings of the Cambridge Philosophical Society},
 volume={87},
 issue={1},
 pages={29--45},
 year={1980},
}

\bib{article:franksejtes}{incollection}{
 title={Fractional packing and covering},
 author={A.~Schrijver},
 booktitle={Packing and Covering in Combinatorics (Study week {"}Stapelen en overdekken{"}, Amsterdam, The Netherlands, June 5-9, 1978)},
 series={Mathematical Centre Tracts},
 volume={106},
 pages={201--274},
 year={1979},
}

\bib{article:seymourlistcolormatroid}{article}{
 title={A note on list arboricity},
 author={P.~D.~Seymour},
 journal={Journal of Combinatorial Theory, Series B},
 volume={72},
 issue={1},
 pages={150--151},
 year={1998},
}

\bib{sheyner02}{article}{
  title={Automated generation and analysis of attack graphs},
  author={O.~Sheyner},
  author={J.~Haines},
  author={S.~Jha},
  author={R.~Lippmann},
  author={J.~M.~Wing},
  journal={Proceedings of the IEEE Symposium on Security and Privacy (S\&P)},
  year={2002},
  pages={273--284},
}

\bib{article:tarjan}{article}{
 title={A good algorithm for edge-disjoint branching},
 author={R. E. Tarjan},
 journal={Information Processing Letters},
 volume={3},
 issue={2},
 pages={51–-53},
 year={1974},
}

\bib{article:thomassen}{article}{
 title={Strongly 2-connected orientations of graphs},
 author={C.~Thomassen},
 journal={Journal of Combinatorial Theory, Series B},
 volume={110},
 pages={67--78},
 year={2015},
}

\bib{article:Tutte_edge_disjoint_spanning_trees}{article}{
 title={On the problem of decomposing a graph into $n$ connected factors},
 author={W.~T.~Tutte},
 journal={Journal of the London Mathematical Society},
 volume={36},
 issue={1},
 pages={221--230},
 year={1961},
}

\bib{article:whitty}{article}{
 title={Vertex-disjoint paths and edge-disjoint branchings in directed graphs},
 author={R.~W.~Whitty},
 journal={Journal of Graph Theory},
 volume={11},
 issue={3},
 pages={349--358},
 year={1987},
}

\bib{xu24}{article}{
  title={The landscape of minimum label cut (hedge connectivity) problem},
  author={R.~Xu},
  author={A.~Farag\'o},
  eprint={\href{https://arxiv.org/abs/1908.06541}{\texttt{arXiv:\allowbreak1908.06541 [cs.DS]}}},
}

\bib{labelcut}{article}{
  title={Approximation and hardness results for label cut and related problems},
   author={P.~Zhang},
   author={J.-Y.~Cai},
   author={L.~Tang},
   author={W.~Zhao},
   journal={Journal of Combinatorial Optimization},
   volume={21},
   issue={2},
   pages={192--208},
   year={2011},
}

\end{biblist}
\end{bibdiv}
\end{document}